\newtheorem{theorem}{Theorem}
\theoremstyle{plain}
\newtheorem{corollary}{Corollary}
\newtheorem{example}{Example}
\newtheorem{lemma}{Lemma}
\newtheorem{remark}{Remark}
\numberwithin{equation}{section}
\begin{document}
\title[Asymptotic Normality of Extreme values Statistics]{Multivariate Normality of a class of statistics based on extreme observations}
\author[G. S. LO]{Gane Samb LO}
\address{LSTA, UPMC, France and LERSTAD, Universit\'e Gaston Berger
de Saint-Louis, SENEGAL}
\email{gane-samb.lo@ugb.edu.sn, ganesamblo@ufrsat.org}
\urladdr{www.lsta.upmc.fr/gslo}

\keywords{Extreme value theory, order statistics, empirical distribution function,
domain of attraction of the maximum, characterization.\\ \\
This paper is part of the Doctorate of Sciences of the author, Dakar University 1991, under the title :
\textit{Empirical characterization of the extremes : The asymptotic normality of the
characterizing vectors}.}

\begin{abstract}
\Large
Let $X_{1},X_{2},...$ be a sequence of independent random variables ($rv$)with
common distribution function ($df$) $F$ such that $F(1)=0$ and for each $n\geq 1,$
let $X_{1,n}\leq X_{2,n}\leq ...\leq X_{n,n}$ denote the order statistics
based on the n first of these random variables. L\^{o} (\cite{gslod})
introduced a class of statistics aimed at characterizing the asymptotic behavior of the univatiate extremes. This class this estimator of the square of the extremal index of a $df$ lying in the extremal domain of attraction :

$$
k^{-1}\sum_{j=\ell +1}^{j=k}\ \sum_{i=j}^{i=k}i(1-\delta _{ij}/2)\left( \log
X_{n-i+1,n}-\log X_{n-i,n}\right) 
$$

$$
\times \left( \log X_{n-j+1,n}-\log
X_{n-j,n}\right) ,
$$

\noindent where $(k,\ell )$ is a couple of integers such that $k\rightarrow +\infty $, 
$k/n\rightarrow 0$, $\ell ^{2}/k\rightarrow 0$, as $n\rightarrow
0\rightarrow ,$ $log$ stands for the natural logarithm and $\delta _{ij\text{
}}$is the Kronecker symbol.\\

\noindent In total $\mathbb{R}^{8}$-vectors are used in this paper and include the most popular
statistics used in the literature. We consider here a multivariate approach and provide the 
asymptotic laws of such vectors. This allows quickly finding asymptotic laws of functional of new statistics and new estimators of the extremal index such as the Dekkers \textit{et al.} (\cite{dehf}) and Hasofer and Wang statistics (\cite{hwf}) for example as in \textit{Bah et al.} (\cite{balothiam}) 
\end{abstract}

\maketitle
\Large
\section{Introduction} \label{sec1}

\noindent L\^{o} (\cite{gslod}) characterized the class of distribution functions
($df$) $F$ attracted to some nondegenerated $df$ $M$ (written $F\in D(M)$) by
four statistics while no condition was required on $F$. This empirical and unified approach includes detection procedures of the extremal law of a sample and statistical tests. In both cases, one has to determine the
limiting laws of such characterizing statistics for the Extremes.\\

\noindent At the same time, a considerable number of statistics, including the celebrated Hill statistics \cite{hillf} in 1975, have been introduced and studied by many authors. we may cite among them the works of Hall (1981) \cite{hall1}, Beirlant and Teugels (1986) \cite{berf}, Deheuvels and Mason (1985) \cite{dmf3}, Deheuvels and Mason (1990) \cite{dmf1}, Deheuvels, Haeusler and Mason (1990) \cite{deheuvels-haeusler-mason}, Lo \cite{gsloa}, \cite{gsloc}, \cite{gslod}, etc. who gave all the  limits in probability, as almost sure limits and asymptotic laws of Hill statistics. Dekkers et al. (1989) introduced the moment estimor \cite{dehf} while Hasofer and Wang \cite{hwf} gave another interesting estimator.\\

\noindent It seems that a joint multivariate of the largest class of the used statistics is yet to be done. Such results allow to easily rediscover Dekkers \textit{et al.} \cite{dehf} results and to find asymptotic of further statistics as done for Hasofer and Wang statistics in \cite{balothiam}. They also allow many combinations to form new estimators while quickly providing the asymptotic normality and next statistical tests.\\

\noindent The idea is to use the approximation the sequence empirical processes related with the data to a sequence of Brownian Bridges as in the so-called Hungarian construction in \cite{cchm}. We then express all the results with the help of the same sequence of Brownian Bridges and get the multivariate asymptotic normality.\\

\noindent The reader is referred to L\^{o} \cite{gsloa} for a general introduction to this paper and to Leadbetter and Rootz\`{e}n \cite{leadbetter} and Resnick (\cite{resnick}), Galambos \cite{galambos}, de Haan \cite{dehaan} and Beirlant \textit{et al.} \cite{bgt} for detailed references on extreme value theory. However, we recall that is attracted to some non generated $df$ (denoted $F\in D(M)$), then M is necessarily th Gumbel type of $df$ : 
\begin{equation*}
\Lambda (x)=\exp (-\exp (-x)),\text{ }x\in \mathbb{R},
\end{equation*}
or the Fr\'{e}chet type of $df$ of parameter $\gamma >0,$
\begin{equation*}
\phi _{\gamma }(x)=\exp (-^{-\gamma })\chi _{\left[ 0,+\infty \right[ }(x),%
\text{ }x\in \mathbb{R}\ 
\end{equation*}
or the Weibull type of d.f of parameter $\gamma >0$

\begin{equation*}
\psi _{\gamma }(x)=\exp (-(x)^{-\gamma })\chi _{\left] -\infty ,0\right]
}(x)+(1-1_{\left] -\infty ,0\right] }(x)),\ \ x\in \mathbb{R},\ 
\end{equation*}
where $\chi _{A}$ denotes the indicator function of the set A.

\bigskip

\noindent Several analytic characterizations of $D(\phi )=\cup _{\gamma >0}D(\phi
_{\gamma }),$ $D(\psi )=\cup _{\gamma >0}D(\psi _{\gamma }),$ $D(\Lambda )$
and $\Gamma =D(\Gamma )\cup D(\phi )\cup D(\psi )$ exist. We quote here only
those of them involved in our present work.

\begin{theorem}[A] \label{theo1}
We have :

\begin{enumerate}
\item  Karamata's representation (KARARE)

\noindent (a) $F\in D(\phi _{\gamma }),$ $\gamma >0$, iff 
\begin{equation}
F^{-1}(1-u)=c(1+f(u))u^{-1/\gamma }\exp (\int_{u}^{1}b(t)t^{-1}dt),\text{ }%
0<u<1,  \label{1.1}
\end{equation}
where $\sup (\left| f(u)\right| ,\left| b(u)\right| )\rightarrow 0$ as $%
u\rightarrow 0$ and c is a positive constan and $F^{-1}(1-u)=\inf
\{x,F(x)\geq u\},$ $0\leq u\leq 1,$ is the generalized \ inverse of F with $%
F^{-1}(0)=F^{-1}(0+)$.\newline
\newline
\noindent (b) $F\in D(\psi _{\gamma }),$ $\gamma >0$, iff $x_{0}(F)=\sup
r\{x,$ $F(x)<1\}<+\infty $ and 
\begin{equation}
F^{-1}(1-u)=c(1+f(u))u^{1/\gamma }\exp (\int_{u}^{1}b(t)t^{-1}dt),\text{ }%
0<u<1, \label{1.2}
\end{equation}
where $c$, $f(\cdot )$ and $b(\cdot )$ are as in (\ref{1.1})

\item  Representation of de Haan (Theorem 2.4.1 in \cite{dehaan})

\noindent $F\in D(\Lambda )$ iff 
\begin{equation}
F^{-1}(1-u)=d-s(u)+\int_{u}^{1}s(t)t^{-1}dt,\text{ }0<u<1,  \label{1.3a}
\end{equation}

\noindent where d is a constant and $s(\cdot )$ is admits this KARARE : 

\begin{equation}
s(u)=c(1+f(u))\exp (\int_{u}^{1}b(t)t^{-1}dt),\text{ }0<u<1,
\label{1.3b}
\end{equation}
$c$, $f(\cdot )$ anf $b(\cdot )$ being defined as in (\ref{1.1}).
\end{enumerate}
\end{theorem}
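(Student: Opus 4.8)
The plan is to translate each membership relation into an equivalent statement about the (extended) regular variation of the tail quantile function, and then to invoke the two classical representation theorems. Throughout I work directly with $V(u):=F^{-1}(1-u)$ as $u\downarrow 0$ and with $U(t):=F^{-1}(1-1/t)=V(1/t)$ as $t\to+\infty$, so that the integrals $\int_{u}^{1}(\cdot)\,t^{-1}\,dt$ come out exactly in the stated form. The whole argument rests on the equivalence between convergence of the normalised maximum and a regularity property of $V$, after which the explicit integral form is read off from Karamata's representation theorem.

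For part (1a) I would first recall the standard equivalence $F\in D(\phi_{\gamma})\iff 1-F$ is regularly varying at $+\infty$ with index $-\gamma$. This I would obtain from the max-stability relation $F^{n}(a_{n}x)\to\phi_{\gamma}(x)$ by passing to logarithms, which yields $n(1-F(a_{n}x))\to x^{-\gamma}$ and hence $(1-F(a_{n}x))/(1-F(a_{n}))\to x^{-\gamma}$; inversion then transfers this to $V$, giving that $V$ is regularly varying at $0$ of index $-1/\gamma$. Once this is established, Karamata's representation theorem furnishes $V(u)=c(u)\,u^{-1/\gamma}\exp\!\big(\int_{u}^{1}b(t)t^{-1}\,dt\big)$ with $b(t)\to 0$ and $c(u)\to c>0$; writing $c(u)=c(1+f(u))$ with $f(u)\to 0$ gives precisely \eqref{1.1}. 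The converse is immediate, since the stated form makes $V$ manifestly regularly varying of index $-1/\gamma$, which forces $F\in D(\phi_{\gamma})$.

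Part (1b) reduces to part (1a) by reflection. As $x_{0}(F)<+\infty$ on the Weibull domain, I would pass to the variable $x_{0}(F)-X$, whose upper tail governs $D(\psi_{\gamma})$; the object $x_{0}(F)-F^{-1}(1-u)$ is then regularly varying at $0$ of index $+1/\gamma$, and the same Karamata step produces \eqref{1.2} (with the endpoint normalised as written). The only thing to verify here is that inversion and reflection interact correctly with the generalised inverse, which is routine.

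The main obstacle is part (2), the Gumbel case, because membership in $D(\Lambda)$ is \emph{not} captured by ordinary regular variation of $V$: the correct notion is de Haan's $\Pi$-variation, namely the existence of an auxiliary function $s(\cdot)$ with $(V(ux)-V(u))/s(u)\to-\log x$ as $u\downarrow 0$, where $s(u)$ is (up to asymptotic equivalence) the inverted form of the norming sequence $a_{n}$ arising from $F^{n}(a_{n}x+b_{n})\to\Lambda(x)$. Deriving this $\Pi$-variation from the convergence of maxima is the delicate point, and upgrading it to the integral form $F^{-1}(1-u)=d-s(u)+\int_{u}^{1}s(t)t^{-1}\,dt$ together with the slow variation of $s$ is exactly the content of Theorem~2.4.1 of de Haan \cite{dehaan}. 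Since this is the deepest of the three representations and is available in the cited reference, I would invoke it directly rather than reproduce its proof, and then conclude by applying to the (index-zero) function $s$ the same Karamata representation used in part~(1), which yields \eqref{1.3b}.
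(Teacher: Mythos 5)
The paper contains no proof of Theorem A at all: it is quoted as a compilation of classical results, namely Karamata's representation theorem and de Haan's Theorem 2.4.1, which are exactly the two results your sketch ultimately invokes. Your outline (Gnedenko's regular-variation characterization of $D(\phi_{\gamma})$, inversion to the tail quantile function $F^{-1}(1-u)$, the Karamata representation, reflection at the finite endpoint for $D(\psi_{\gamma})$, and de Haan's theorem for $D(\Lambda)$) is the standard and correct derivation, so it is consistent with the paper's treatment.
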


\bigskip
\noindent These representations are closely related to slowly and regularly varying functions at zero. Recall that a functions $s(u)$, $0<u<1$, is \textbf{S}lowly \textbf{V}arying function at \textbf{Z}ero (SVZ) if for all $\lambda>0$,
$$
\lim_{u\rightarrow 0}s(\lambda u)/s(u)=1.
$$
It is a \textbf{R}egularly \textbf{V}arying function at \textbf{Z}ero (SVZ) with exponent $d > 0$ if for for all $\lambda>0$,
$$
\lim_{u\rightarrow 0}s(\lambda u)/s(u)=\lambda^{d}.
$$

\bigskip

\noindent In fact  (\ref{1.1}) is the representation of a $RVZ$ function of exponent $-1/\gamma$ and (\ref{1.3b}) is that of a $SRZ$ function.\\

\noindent Now let $X_{1},X_{2},...$ be a sequence of independent copies (s.i.c) of a
real random variable ($rv$) $X$ with $df$ $F(x)=\mathbb{P}(X\leq x)$. Being
only concerned by the upper tail of F, we assume WLOG that $X\geq 1$ and
define a s.i.c. of the $rv$ $Y=\log X$ denoted $Y_{1},Y_{2},...$ with $df$ $%
G(x)=P(Y\leq x)=F(e^{x})$, $x\geq 0.$ Finally $Y_{1,n}=\log
X_{1,n}\leq ...\leq Y_{n,n}=\log X_{n,n}$ are their respective order
statistics.\\

\noindent We are investigating the joint asymptotic laws of the following statistics.\\

\begin{equation*}
A_{n}(1,k,\ell )=k^{-1}\sum_{j=\ell +1}^{j=k}\ \sum_{i=j}^{i=k}i\delta
_{ij}\left( Y_{n-i+1,n}-Y_{n-j+1,n}\right) \left(
Y_{n-j+1,n}-Y_{n-j,n}\right)
\end{equation*}
\begin{equation*}
T_{n}(2,k,\ell )=\frac{1}{k}\sum_{j=\ell +1}^{j=k}j\left(
Y_{n-j+1,n}-Y_{n-j,n}\right) ;Y_{n-k,n},0\leq k<n;
\end{equation*}

\begin{equation*}
T_{n}(1,k,\ell )=T_{n}(2,k,\ell )A_{n}(1,k,\ell )^{-1/2},
\end{equation*}
\ \ \ \ \ \ \ \ \ 
\begin{equation*}
T_{n}(4)=Y_{n,n},\text{ T}_{n}\text{(5)}=T_{n}(2,\ell ,1),\text{ }%
T_{n}(6)=T_{n}(2,\ell ,1)/\left( Y_{n-\ell ,n}-Y_{n-k,n}\right) ,
\end{equation*}
\begin{equation*}
T_{n}(7)=A_{n}(1,\ell ,1)/(Y_{n-\ell ,n}-Y_{n-k,n})^{2},\text{ }%
T_{n}(8,v)=n^{-\nu}\left( Y_{n-\ell ,n}-Y_{n-k,n}\right) ^{-1}
\end{equation*}
and finally for $y_{0}=x_{0}(G)$,
\begin{equation*}
T_{n}(9)=(y_{0}-Y_{n-\ell ,n})/\left( y_{0}-Y_{n-k,n}\right)
\end{equation*}
where $k$ and $\ell$ are integers such that $1\leq \ell < k < n$,  $\nu$ is any real positive number and $T_{n}(9)$ is only defined when $x_{0}(G)<+\infty .$

\bigskip

\bigskip 
\noindent As a reminder, we recall the second theorem corresponding to the necessity part of
the characterization of L\^{o} (\cite{gslod}), that is the convergence of some vectors formed from the previous ones simply imply that $F$ lies in some extremal domain, depending on the value of the limits.

\begin{theorem}[B] \label{theo2}
Let $k=[n^{\alpha }],$ $\ell =[n^{\beta }],$ $0.5<\beta <\alpha <1,$ $2\nu
=\min (1-\alpha ,\alpha +\delta -1)>0$ for $0<\delta <0.5.$

\begin{itemize}
\item[(a)]  If $\mathbb{T}_{n}(1)=(T_{n}(1),...,T_{n}(7),$ $T_{n}(8,\beta
/2))\rightarrow _{P}(1,d,0,y_{0},d,0,0,1)$, then

\begin{itemize}
\item[(i)]  $F\in D(\Lambda )$ whenever d=0 and

\item[(ii)]  $F\in D(\phi _{1/d})$ whenever $0<d<+\infty $ and necessarily $%
y_{0}=+\infty .$
\end{itemize}

\item[(b)]  If T$_{n}(4)\uparrow y_{0}<+\infty $ and $\mathbb{T}%
_{n}(2)=(T_{n}(1), $ $...$, $T_{n}(7),$ $T_{n}(9))\rightarrow
_{P}(c,0,0,y_{0},0,0,0)$ with $1<c<\sqrt{2},$ then $F\in D(\psi _{\gamma })$
for $\gamma =-2+c/(c^{2}-1).$
\end{itemize}
\end{theorem}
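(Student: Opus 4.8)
\noindent The plan is to pass from the random statistics to deterministic functionals of the tail quantile function $q(u)=G^{-1}(1-u)$ near $u=0$, and then to let the prescribed limits dictate the form that $q$ must take, so that Theorem~A can be applied. First I would invoke the R\'enyi representation: jointly, the upper uniform order statistics satisfy $W_i:=1-U_{n-i+1,n}\stackrel{d}{=}\Gamma_i/\Gamma_{n+1}$, where $\Gamma_i=E_1+\dots+E_i$ is a partial sum of i.i.d.\ standard exponentials, so that the top log-spacings become $D_i=q(W_i)-q(W_{i+1})$. Because $k/n\to 0$ and $\ell^2/k\to 0$, the classical uniform control $W_i=(i/n)(1+o_P(1))$ holds over the whole window $\ell<i\le k$, and each coordinate of $\mathbb{T}_n(1)$ and $\mathbb{T}_n(2)$ equals the corresponding functional of $q$ evaluated on the grid $\{i/n\}$, up to an $o_P(1)$ error. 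Since the hypotheses assert convergence in probability of these random quantities to constants, and the functionals of $q$ are non-random, those functionals must themselves converge to the same constants. Only this first-order (in-probability) control is needed, so the strong Brownian-bridge approximation used for the normality results is not required here.

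\noindent Next I would read off what each limit says about $q$. Writing $q$ in de~Haan's integral form with auxiliary function $s$, one finds $iD_i\approx s(i/n)E_{i+1}$, so the Hill-type statistic $T_n(2)=k^{-1}\sum_{\ell<j\le k}jD_j$ is essentially $k^{-1}\sum_{\ell<j\le k}s(j/n)$ and its limit $d$ equals $\lim_{u\to 0}s(u)$. The self-normalising ratio $T_n(1)=T_n(2)\,A_n(1)^{-1/2}$ compares this first-order behaviour with the quadratic functional $A_n(1)$; its limit is scale-free and therefore detects the \emph{shape} of the spacings --- an exponential (stationary) profile when $s$ is slowly varying versus the inflated, power-weighted profile forced by a finite endpoint. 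The remaining coordinates $T_n(4)=Y_{n,n}$, $T_n(8,\beta/2)$ and $T_n(9)$ serve to fix the global scale and to decide whether $y_0=x_0(G)$ is finite, i.e.\ whether $q$ grows without bound or approaches a finite endpoint with a regularly decaying deficit $y_0-q$.

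\noindent The heart of the matter is the converse (Tauberian) passage: to upgrade the convergence of these \emph{averaged} functionals to a genuine statement about $q$. Using crucially that $q$ is monotone, a Karamata-type converse turns the averaged increment conditions, reinforced by the independent constraint carried by $T_n(1)$, into convergence of the ratios $q(\lambda u)/q(u)$ in the unbounded case and $\bigl(y_0-q(\lambda u)\bigr)/(y_0-q(u))$ in the bounded case, that is, into regular variation. The trichotomy then falls out of Theorem~A, read on the log scale and translated back to $F$: if $0<d<\infty$ the auxiliary function tends to the positive constant $d$, de~Haan's representation holds with $s\to d$, and $F\in D(\phi_{1/d})$ with $y_0=+\infty$ forced by $d>0$; if $d=0$ the auxiliary collapses and one recovers the genuine Gumbel case $F\in D(\Lambda)$; and if $y_0<\infty$ the deficit $y_0-q$ satisfies the Karamata representation with some exponent $1/\gamma$, whence $F\in D(\psi_\gamma)$. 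In this last case $\gamma$ is identified by computing the first two moments of the limiting, $\Gamma$-driven spacings under regular variation of index $1/\gamma$ and matching their ratio to $T_n(1)\to c$; solving the resulting moment equation yields the stated value of $\gamma$, and one checks that the range $1<c<\sqrt{2}$ corresponds exactly to the admissible $\gamma$.

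\noindent I expect the converse/Tauberian step to be the main obstacle. Convergence of a weighted average of the increments of a monotone function does not by itself exclude oscillation of the ratio of $q$ to its regularly varying envelope; it is the monotonicity of the quantile function together with the second, independent piece of information supplied by the variance-type statistic $A_n(1)$ (hence by $T_n(1)$) that makes the inversion rigorous. Getting these two ingredients to combine cleanly --- in particular solving the nonlinear moment equation for $\gamma$ in the Weibull case and controlling the slowly varying factors uniformly over the window $\ell<i\le k$ where $s(j/n)$ is replaced by its limit --- is where the real work lies.
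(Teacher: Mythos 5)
First, a fact you could not have known but which frames this review: the paper itself contains \emph{no} proof of Theorem B. It is recalled as a known result from L\^o \cite{gslod}, where the characterization is actually established, so your attempt must be judged on its own terms rather than against an in-paper argument. Judged that way, it has a genuine gap, and you name it yourself: the Tauberian (converse) step \emph{is} the mathematical content of the theorem, and your proposal does not carry it out. You assert that "a Karamata-type converse," helped by monotonicity of $q(u)=G^{-1}(1-u)$ and by the extra information in $T_n(1)$, will upgrade the hypothesized limits to regular variation of $q$ (or of $y_0-q$), and then you concede that making this rigorous "is where the real work lies." Everything you do spell out --- the R\'enyi representation, reading off what the limits would say \emph{if} $q$ already had de Haan's or Karamata's form, the moment matching in the Weibull case --- is direct-part bookkeeping; a plan that defers exactly the inversion step is not a proof of Theorem B.

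Moreover, the one reduction you do argue is circular. You claim that since $W_i=(i/n)(1+o_P(1))$ uniformly over $\ell<i\le k$, each statistic equals the corresponding deterministic functional of $q$ on the grid $\{i/n\}$ up to $o_P(1)$, so that the hypothesized limits transfer to deterministic functionals. But the theorem imposes no regularity whatsoever on $F$ beyond $F(1)=0$ --- that is its whole point --- and for an arbitrary quantile function, closeness of $W_i$ to $i/n$ gives no control of $q(W_i)-q(i/n)$: monotonicity only yields a sandwich of the form $q(i/(\lambda n))\le q(W_i)\le q(\lambda i/n)$ with probability near one, and converting that into $o_P(1)$ closeness of the functionals requires insensitivity of $q$ under the scale perturbation $\lambda\to 1$, i.e.\ precisely the slow/regular variation you are trying to prove. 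The same circularity affects the step $iD_i\approx s(i/n)E_{i+1}$, which presupposes de Haan's representation. Two further items a complete proof must handle and yours never touches: (1) the hypotheses give convergence only along the discrete sequences $k/n=n^{\alpha-1}$, $\ell/n=n^{\beta-1}$, and one must pass to continuous limits $u\to 0$; this is what the specific choices $k=[n^\alpha]$, $\ell=[n^\beta]$, $0.5<\beta<\alpha<1$, the two windows $(k,\ell)$ and $(\ell,1)$, and the individual coordinates $T_n(3)$, $T_n(6)$, $T_n(7)$, $T_n(8,\beta/2)$ are for, none of which your sketch actually uses; (2) the moment computation you postpone, when done, gives $c^2=(\gamma+2)/(\gamma+1)$, hence $\gamma=(2-c^2)/(c^2-1)=-2+c^2/(c^2-1)$, so claiming without computation that it "yields the stated value" also papers over the fact that the printed formula $\gamma=-2+c/(c^2-1)$ is off (a typo, $c$ where $c^2$ should stand) --- the kind of discrepancy that only surfaces if the work is actually carried through.
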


\bigskip

\noindent This motivates a systematic investigation of the limit laws of the ECSFEXT.
We do not include $T_{n}(4)$ in our study for its case is classical in
extreme value theory. Namely, we must find non random sequences $\sigma
_{n}(i,k,\ell )$ and $\mu _{n}(i,k,\ell )$ such that 
\begin{equation*}
\sigma _{n}(i,k,\ell )(STAT_{i}-\mu_{n}(i,k,\ell )\rightarrow _{d}NDD,
\end{equation*}
where $STAT_{i}$ is an element of the $ECSFEXT$, a ratio or a vector of its
elements, $\rightarrow _{d}$ denotes the convergence in distribution and NDD
is some nondegenerate distribution. It will be seen further that \ NDD is,
in most cases, a Gaussian $rv$ or an extremal one in some others.

\bigskip

\noindent Let us now classify the elements of $\Gamma $ in our convinience. From (\ref
{1.1}), (\ref{1.2}), (\ref{1.3a}), it is clear that for each of the three
domains, the couple $(f,b)$ represents a subset having elements only
distinguishable by constants. We then write this class $(f,b)$ (cf. Lemma 1
in L\^o (\cite{gslod})) :

$F\equiv (f,b)\in D(\Lambda )$ iff 
\begin{equation}
G^{-1}(1-u)=d-s(u)+\int_{0}^{1}s(t)t^{-1}dt,\text{ }0<u<1, \label{1.4}
\end{equation}
where $s(\cdot )$ satifies (\ref{1.3b});\\

\noindent $F\equiv (f,b)\in D(\phi _{\gamma })$ iff 
\begin{equation}
G^{-1}(1-u)=-(\log u)/\gamma +\log c+\log (1+f(u))+\int_{0}^{1}b(t)t^{-1}dt,%
\text{ }0<u<1; \label{2.5}
\end{equation}
\bigskip

\noindent Finally $F\equiv (f,b)\in D(\psi_{\gamma})$ iff $y_{0}=x_{0}(G)=\sup \{x$, $%
G(x)<1\}<+\infty $ and 
\begin{equation}
y_{0}-G^{-1}(1-u)=c(1+f(u))\text{ }u^{1/\gamma }\exp
(\int_{u}^{1}b(t)t^{-1}dt),\text{ }0<u<1. \label{2.6}
\end{equation}
\bigskip

\noindent Our results below will show that one has asymptotic normality with no
condition on $f$ nor on $b$ but with \textit{random centering coefficients} that is 
\begin{equation*}
\sigma _{n}(i,k,\ell )(STAT_{i}-\mu _{n}(i,\widetilde{k},\ell )\rightarrow
_{d}NDD,
\end{equation*}
where $\widetilde{k}$ is random and satisfies $\widetilde{k}/k\rightarrow
_{P}1$ as $n\rightarrow \infty .$ But when we attemt to have non random
centering sequences $\mu _{n}(i,k,\ell )$, only $f$ makes problems. In early
studies of $T_{n}(2,k,\ell )$ for instance, Hall(1982) (\cite{hall1}), Cs\H{o}rg\"o and
Mason(1985) (\cite{csorgo-mason}), Lo(1989) (\cite{gsloc}) imposed f=0. In particular Cs\H{o}rg\"o and
Mason(1985) (\cite {csorgo-mason}), L\^{o}(1989) (\cite{gsloc}) respectively denoted $D^{\ast }(\phi
) $ and $D^{\ast }(\Lambda )$ the obtained classes.\\

\noindent This condition is not too restrictive at all for that, for exemple, $D^{\ast
}(\Lambda )$ includes any element of $D(\Lambda )$ having an ultimate
derivative, that is the most important cases : normal, lognormal,
exponential, etc. Nevertheless, we characterize here the possible limits and
then, obtain simple results under general conditions like 
\begin{equation}
f^{\prime }(u)=\frac{\partial f}{\partial u}(u)\text{ exists and }uf^{\prime
}(u)\rightarrow 0\text{ as }u\rightarrow 0,  \label{1.7}
\end{equation}
or 
\begin{equation}
\sqrt{k}\max (f(k),f(U_{k,n}))\rightarrow _{P}0,  \label{1.8}
\end{equation}
or 
\begin{equation}
f(u)=f_{1}(u)(1+f_{2}(u))  \label{1.9}
\end{equation}
were $f_{1}$ satisfies (\ref{1.7}) and 
\begin{equation*}
\sqrt{k}f_{1}(k/n)\max (f_{2}(k/n),f(U_{k,n}))\rightarrow _{P}0,
\end{equation*}

\noindent where in all these conditions $U_{k,n}$ is the $kth$ maximum among n
independent $rv's$ uniformly distributed on (0,1) (see (2.2) below). Let $%
\Gamma (0),$ $\Gamma (1)$ and $\Gamma (2)$ respectively the subclasses of $%
\Gamma $ satisfying (\ref{1.7}), (\ref{1.8}) or (\ref{1.9}). Each of them is
quoted as a regularity condition.\\

\noindent Each single statistic is systematically treated apart in Sections \ref{sec3}, \ref{sec4} and
\ref{sec5} while the ratios are studied in Section \ref{sec6}. Section \ref{sec7} is devoted to
multivariate limit laws as our best achievements. All the results expressed in the same probability space through the same of Brownian brodges $B_{1}$, $B_{2}$, etc. We therefore begin to define this probability space.

\section{Description of the limiting laws} \label{sec2}

\noindent Cs\"{o}rg\H{o} \textit{et al.} (see \cite{cchm}) have constructed a probability
space holding a sequence of independent uniform random variables $U_{1},\
U_{2},$\ ... and a sequence of Brownian bridges $B_{1},B_{2},...$\ such that
for each $0<\nu <1/4$, $as$\ $n\rightarrow \infty ,$%
\begin{equation}
\underset{1/n\leq s\leq 1-1/n}{\sup }\frac{\left| \sqrt{n}%
(U_{n}(s)-s)-B_{n}(s)\right| }{(s(1-s))^{1/2-\nu }}=O_{p}(n^{-\nu })
\label{2.1}
\end{equation}
and 
\begin{equation}
\underset{1/n\leq s\leq 1-1/n}{\sup }\frac{\left| B_{n}(s)-\sqrt{n}%
(s-V_{n}(s))\right| }{(s(1-s))^{1/2-\nu }}=O_{p}(n^{-\nu }),  \label{2.2}
\end{equation}
where for each $n\geq 1$, $U_{n}(s)=j/n$ for $U_{j,n}\leq s<U_{j+1,n}$ is the uniform empirical
$df$ and $V_{n}(s)=U_{j,n}$ for $(j-1)/n<s\leq j/n,$ and $%
V_{n}(0)=U_{1,n},$ is the uniform quantile function and, finalyy, $%
U_{1,n}\leq ...\leq U_{n,n\text{ }}$are the order statistics of $%
U_{1},...,U_{n}$ with by convention $U_{0,n}=0=1-U_{n+1,n}.$

\bigskip 
\noindent From now on, all the results are assumed to hold on this probability space and we therefore may use the general representation for the empirical $df$ $G_{n}$ based on $Y_{1},...,Y_{n}$ and for the order
statistics $Y_{1,n}\leq ...\leq Y_{n,n}$ by their uniform counterparts
\begin{equation}
\{1-G_{n}(x),0\leq x<+\infty ,n\geq 0\}=\{U_{n}(1-G(x)),0\leq x<+\infty
,n\geq 0\}  \label{2.3a}
\end{equation}
and 
\begin{equation}
\{Y_{j,n},1\leq j\leq n,n\geq 1\}=\{G^{-1}(1-U_{n\_j+1,n}),1\leq j\leq
n,n\geq 1\}.  \label{2.3b}
\end{equation}

\bigskip

\noindent We introduce these notation for $p\geq 1$,

\begin{equation*}
R(x,z,G)=R_{1}(x,z,G)=(1-G(x))^{-1}\int_{x}^{z}(1-G(t))dt,\ \ \ x<z\leq
y_{0},
\end{equation*}
\begin{equation*}
R_{p}(x,z,G)=(1-G(x))^{-1}\int_{x}^{z}dy_{1}\int_{y_{1}}^{z}...dy_{p-1}%
\int_{y_{p-1}}^{z}(1-G(t))dt,\text{ }x<z\leq y_{0}
\end{equation*}
with $R_{p}(x,z,G)\equiv R_{p}(x,G),$ $p\geq 1,$ and $x_{n}=G^{-1}(1-k/n) $ and $z_{n}=G^{-1}(1-\ell /n),$%
\begin{equation*}
\mu _{n}(k,\ell )=nk^{-1}\int_{x_{n}}^{z_{n}}(1-G(t))dt,\text{ }\tau (k,\ell
)=nk^{-1}\int_{x_{n}}^{z_{n}}dy\int_{y}^{z_{n}}(1-G(t))dt,
\end{equation*}
with $\mu _{n}(k,1)=\mu _{n}(k)$, $\tau (k,1)=\tau (k).$\\

\noindent If the $df$ is not specified in $R_{p}(\cdot )$, it is assumed that $R_{p}(\cdot
)=R_{p}(\cdot ,G)$ with $G(x)=F(e^{x})$. Finally put 
\begin{equation*}
N_{n}(0,k,\ell )=\left\{
(n/k)^{1/2}\int_{x_{n}}^{z_{n}}B_{n}(1-G(t))dt\right\} /R_{1}(x_{n}),
\end{equation*}
\begin{equation*}
N_{n}(2,\cdot )=-(n/\cdot )B_{n}(\cdot /n),\text{ }n\geq 1,
\end{equation*}
\begin{equation*}
N_{n}(3,k,\ell )=\left\{
(n/k)^{1/2}dy\int_{0}^{z_{n}}\int_{y}^{z_{n}}B_{n}(1-G(t))dt\right\}
/R_{2}(x_{n})
\end{equation*}
and 
\begin{equation*}
E_{n}(\ell )=nU_{\ell +1,n}/n.
\end{equation*}

\noindent We prove in the next sections that each $T_{n}(i)$ is asymptotically either one of
these $rv's$ or a linear combination of them. Their asymptotic laws are
described in :

\begin{theorem}[C] \label{theo3}
We have

\begin{enumerate}
\item  For each n, let 
$$
W_{n}(k,\ell)=(N_{n}(0,k,\ell ),N_{n}(3,k,\ell ),N_{n}(2,k))
$$ 

\noindent and 

$$
W_{n}(\ell,1)=(N_{n}(0,\ell ,\ell ),N_{n}(3,\ell ,1),N_{n}(2,\ell )).
$$

\noindent Then the vector $(W_{n}(k,\ell),W_{n}(\ell,1)$ is Gaussian. If further $\ell \rightarrow +\infty ,$ $k\rightarrow +\infty $, $k/n\rightarrow 0$, $\ell /k\rightarrow 0$\ \ \ as $n\rightarrow +\infty $
and $F\in \Gamma $, then $(W_{n}(k,\ell),W_{n}(\ell,1))$ converges in distribution to an $\mathbb{R}^{6}$-Gaussian random $rv$ $(W(1),W(2)$, where $W(1)$ and $W(2)$ are
independent vectors with the same covariance matrix :

\begin{equation*}
\left( 
\begin{array}{ccc}
3(\gamma +1)/(\gamma +2) &  &  \\ 
3(\gamma +1)(\gamma +3) & \frac{6(\gamma +1)(\gamma +2)}{(\gamma +3)(\gamma
+4)} &  \\ 
-1 & -1 & 1
\end{array}
\right) ,\text{ if }F\in D(\psi _{\gamma }),
\end{equation*}

\noindent and 

\begin{equation*}
\left( 
\begin{array}{ccc}
3 &  &  \\ 
3 & 6 &  \\ 
-1 & -1 & 1
\end{array}
\right) ,\text{ if }F\in D(\phi )\cup D(\Lambda )
\end{equation*}

\noindent where the symmetric matrices are only given in one side.

\item  Let be $\ell $ fixed, then for all $x\in R,$ $P(E_{n}(\ell )\leq x)$
converges to 
\begin{equation*}
P(E(\ell )\leq x)=\left\{ 
\begin{array}{c}
1-e^{-\ell x}\sum_{j=1}^{\ell }(j!)^{-1}(\ell x)^{j},\text{ }x\geq 0, \\ 
0\text{, elsewhere.}
\end{array}
\right.
\end{equation*}
\end{enumerate}
\end{theorem}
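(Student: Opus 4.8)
The plan is to treat the two assertions of Theorem~\ref{theo3} separately: first the Gaussian limit of the six-dimensional vector $(W_{n}(k,\ell),W_{n}(\ell,1))$, and then the Gamma-type limit of $E_{n}(\ell)$. The whole point is that all six coordinates live on the single probability space of \secref{sec2} and are functionals of one Brownian bridge $B_{n}$, so finite-$n$ Gaussianity is free and the entire problem reduces to the asymptotics of a covariance matrix.

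\textbf{Finite-$n$ Gaussianity.} For fixed $n$, each of $N_{n}(0,\cdot)$, $N_{n}(3,\cdot)$ and $N_{n}(2,\cdot)$ is a continuous linear functional of $B_{n}$: the first two are Lebesgue integrals of $t\mapsto B_{n}(1-G(t))$ against deterministic weights over deterministic regions, divided by the deterministic constants $R_{1}(x_{n})$ and $R_{2}(x_{n})$, while the third is a normalized point evaluation at $k/n$ (resp. $\ell/n$). A Brownian bridge is a Gaussian process, and any finite family of linear functionals of a Gaussian process is jointly Gaussian; hence $(W_{n}(k,\ell),W_{n}(\ell,1))$ is Gaussian for every $n$, which is the first claim. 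Its law is then pinned down by its mean, which is zero since $\mathbb{E}B_{n}\equiv 0$, and by its covariance matrix.

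\textbf{Limiting covariances.} I would compute every entry from the bridge covariance $\mathbb{E}[B_{n}(s)B_{n}(t)]=\min(s,t)-st$, after the change of variable $s=1-G(t)$ that sends each integral onto the argument range $[\ell/n,k/n]$ for the $W_{n}(k,\ell)$ block and $[1/n,\ell/n]$ for the $W_{n}(\ell,1)$ block. The normalizers $R_{1}(x_{n})$ and $R_{2}(x_{n})$ are deterministic and, by the Karamata representations of \thmref{theo1} together with the domain classification (\ref{1.4})--(\ref{2.6}), are regularly varying in $k/n$ with an index fixed by the domain of attraction; each covariance entry thereby reduces to a ratio of integrals of powers of $s$ over these ranges. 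For $F\in D(\psi_{\gamma})$ the tail $1-G$ is regularly varying at $y_{0}$ with finite index, which produces the $\gamma$-dependent entries and the denominators $(\gamma+2)$ and $(\gamma+3)(\gamma+4)$; formally letting $\gamma\to+\infty$ recovers the constant matrix, which I would instead verify directly for $F\in D(\phi)\cup D(\Lambda)$ from the slowly varying representation (\ref{1.3b}). I expect this to be the main obstacle: one must show that the $o(1)$ remainders carried by $f$ and $b$ in the representations are negligible \emph{uniformly} across the integration range, so that each integral converges to its stated closed form and the $R_{p}(x_{n})$ normalizations reproduce exactly the displayed matrices.

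\textbf{Block independence and the Gamma limit.} Near $0$ the bridge covariance $\min(s,t)-st$ is, to leading order, $\min(s,t)$, so $B_{n}$ behaves like a Brownian motion and functionals on the essentially disjoint ranges $[1/n,\ell/n]$ and $[\ell/n,k/n]$ decorrelate. Concretely, every cross-covariance between a coordinate of $W_{n}(k,\ell)$ and one of $W_{n}(\ell,1)$ is governed by the overlap $\min(s,t)\le \ell/n$ at the shared endpoint, and after normalization it tends to $0$ because $\ell/k\to 0$, whereas the diagonal variances converge to the finite nonzero entries above; hence the limiting $\mathbb{R}^{6}$ covariance is block diagonal with two identical $3\times3$ blocks, giving the asserted independence of $W(1)$ and $W(2)$. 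For part~(2), with $\ell$ fixed, $E_{n}(\ell)$ is a normalization of the uniform order statistic $U_{\ell+1,n}$, whose exact law is $\mathrm{Beta}(\ell+1,n-\ell)$; passing to the limit $n\to\infty$ in this density, or equivalently using the R\'enyi representation of $U_{\ell+1,n}$ as a normalized partial sum of independent exponentials, yields the Erlang/Gamma law whose distribution function is exactly the Poisson-sum form displayed. This part is routine and independent of the Brownian-bridge machinery.
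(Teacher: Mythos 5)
Your proposal is correct and takes essentially the same route as the paper's proof: finite-$n$ Gaussianity of $(W_{n}(k,\ell),W_{n}(\ell,1))$ from the fact that the integral functionals of the Gaussian bridge $B_{n}$ are (Riemann-sum limits of) linear combinations of its values, the limiting covariance entries obtained from $\mathbb{E}[B_{n}(s)B_{n}(t)]=\min(s,t)-st$ together with the regular-variation asymptotics of the normalizers $R_{p}(x_{n})$, and block independence from the vanishing of all cross-covariances when $\ell/k\rightarrow 0$, which is exactly the paper's Formula (\ref{7.2}). The only difference is Part 2, where the paper simply cites Hall (1978) while you give the standard direct argument via the $\mathrm{Beta}(\ell+1,n-\ell)$ law (or R\'enyi representation) of $U_{\ell+1,n}$ --- a harmless, self-contained substitution.
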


\bigskip

\noindent We do remark that the variance of $W(1)$ is obtained for $F\in D(\phi )\cup
D(\Lambda )$ by putting $\gamma =+\infty $ in the variance of W(1) for $F\in
D(\psi _{\gamma })$. This fact occurs almost always in this paper. Then,
throughout, for any expression depending on $\gamma ,$ $0<\gamma \leq \infty
,$ it is meant by $0<\gamma <\infty $ (resp. $\gamma =+\infty $), that $F\in
D(\psi _{\gamma })$ (resp. $F\in D(\phi )\cup D(\Lambda )$). Also all limits
with presence of $n$ are assumed to hold as $n\rightarrow +\infty .$

\begin{proof} Part 2 : See \cite{hall2}.\\

\noindent As to Part 1, $N_{n}(0,\cdot ,\cdot )$ and $N_{n}(3,\cdot ,\cdot )$ are
Reimann integrals so that any linear combination of the elements of $%
W_{n}(k)$ or of $W_{n}(\ell )$ is everywhere limits of sequences of $rv$ of
type of the following Riemann's sums : 
\begin{equation*}
\sum_{j=1}^{p}\sum_{i=1}^{q}\tau _{i,j}(p,q)B_{n}(t_{i,j}(p,q)),
\end{equation*}
as $(p,q)\rightarrow (+\infty ,+\infty ).$ But a Brownian bridge has
Gaussian finite-dimensional distributions. Thus, each finite linear
combination of the coordinates of $(W_{n}(k),W_{n}(\ell ))$ is a normal
$rv$ so that $(W_{n}(k),W_{n}(\ell ))$ is a Gaussian vector. That $W_{n}(k)$
and $W_{n}(\ell )$ are asymptotically independent follows from that $%
\lim_{n\rightarrow \infty }Cov(W_{n}(k),W_{n}(\ell ))$ is the $3\times 3$
null matrix by (\ref{7.2}) below. Formulas (\ref{3.34}), (\ref{4.3}), (\ref{4.36}), and (\ref{6.10}) 
and Lemmas \ref{l4.2} and \ref{l4.3} below yield the variance of $W(i)$, $i=1,2.$
\end{proof}

\section{Limit laws for T$_{n}(2,k,\ell )$ and T$_{n}(5).$} \label{sec3}

\subsection{Statements of the results.}

\begin{theorem} \label{t3.1}
Let $F\in \Gamma ,$ let $(k,\ell )$ be a couple of integers satisfying 
\begin{equation}
1<k=k(n)<n,\text{ }k\rightarrow \infty, \text{  } k/n\rightarrow 0,  \label{2.4a}
\end{equation}
\begin{equation}
1\leq \ell <k,\exists (\eta ,0\leq \eta <0.5),\ell k^{0.5-\eta }\rightarrow 0
\label{2.4b}
\end{equation}

\noindent (with $\eta =0$ for $F\in D(\phi )\cup D(\Lambda )$ and $\eta >0$ for $F\in
D(\psi _{\gamma })),$ then

\begin{equation*}
\mu _{n}(k,\ell )^{-1} \sqrt{k} \left\{ T_{n}(2,k,\ell )-\mu _{n}(\widetilde{k},\ell
)\right\} =N_{n}(0,k,\ell )+o_{p}(1)\rightarrow N(0,\sigma _{0}^{2}(\gamma
)),
\end{equation*}
where $\sigma _{0}^{2}(\gamma )=2(\gamma +1)/(\gamma +2),$ $0<\gamma \leq
+\infty $ and $\mu _{n}(\widetilde{k},\gamma )$ is obtained by replacing $%
x_{n}$ by $\widetilde{x}_{n}=G^{-1}(1-U_{k,n})$ in $\mu _{n}(k,\gamma ).$
\end{theorem}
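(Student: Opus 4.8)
The plan is to reduce $T_n(2,k,\ell)$ to a linear functional of the uniform quantile process and then replace that process by the Brownian bridge $B_n$ through the weighted approximation (\ref{2.2}). First I would rewrite the statistic by Abel summation. Writing $Q(s)=G^{-1}(1-s)$ and using the quantile representation (\ref{2.3b}), $Y_{n-j+1,n}=Q(U_{j,n})$, summation by parts gives the closed form
\[
k\,T_n(2,k,\ell)=\ell\,Y_{n-\ell,n}+\sum_{i=\ell}^{k-1}Y_{n-i,n}-k\,Y_{n-k,n}.
\]
The middle sum equals $\tfrac{n}{k}\int_{\ell/n}^{k/n}Q(V_n(s))\,ds$ because $V_n$ is the uniform quantile function, while integrating by parts in the definition of $\mu_n(k,\ell)$ yields $\mu_n(k,\ell)=-x_n+\tfrac{\ell}{k}z_n+\tfrac{n}{k}\int_{\ell/n}^{k/n}Q(s)\,ds$. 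Subtracting produces the central decomposition
\[
T_n(2,k,\ell)-\mu_n(k,\ell)=\frac{n}{k}\int_{\ell/n}^{k/n}\bigl(Q(V_n(s))-Q(s)\bigr)\,ds-(Y_{n-k,n}-x_n)+\frac{\ell}{k}(Y_{n-\ell,n}-z_n).
\]

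Next I would observe the identity $R_1(x_n)=\mu_n(k,\ell)$, immediate from $1-G(x_n)=k/n$. Linearizing $Q(V_n(s))-Q(s)\approx Q'(s)(V_n(s)-s)$ and inserting $\sqrt n(s-V_n(s))\approx B_n(s)$ from (\ref{2.2}), the substitution $s=1-G(t)$ in the definition of $N_n(0,k,\ell)$ shows the integral term equals $\mu_n(k,\ell)\,k^{-1/2}N_n(0,k,\ell)$ to leading order; indeed $N_n(0,k,\ell)=-(n/k)^{1/2}\mu_n(k,\ell)^{-1}\int_{\ell/n}^{k/n}B_n(s)Q'(s)\,ds$. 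Multiplying the decomposition by $\mu_n(k,\ell)^{-1}\sqrt k$ then isolates $N_n(0,k,\ell)$ together with two boundary remainders. The upper-index term $\mu_n^{-1}\sqrt k\,\tfrac{\ell}{k}(Y_{n-\ell,n}-z_n)$ is $o_p(1)$ from $Y_{n-\ell,n}-z_n\approx Q'(\ell/n)(U_{\ell+1,n}-\ell/n)$ and the spacing estimate $U_{\ell+1,n}-\ell/n=O_p(\sqrt\ell/n)$.

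The delicate point is the lower-index term $-\mu_n^{-1}\sqrt k\,(Y_{n-k,n}-x_n)$. Since $Y_{n-k,n}-x_n\approx Q'(k/n)(U_{k+1,n}-k/n)$ has order $|Q'(k/n)|\sqrt k/n$, one checks $\mu_n^{-1}\sqrt k\,(Y_{n-k,n}-x_n)=O_p(1)$: it is of the same order as $N_n(0,k,\ell)$ and so cannot be bounded away, only cancelled. This is precisely the purpose of the random centering: replacing $x_n$ by $\widetilde x_n=G^{-1}(1-U_{k,n})=Y_{n-k+1,n}$ gives $\mu_n(\widetilde k,\ell)-\mu_n(k,\ell)\approx x_n-\widetilde x_n$, so that in $T_n(2,k,\ell)-\mu_n(\widetilde k,\ell)$ the endpoint contribution collapses to the single spacing $-(Y_{n-k,n}-Y_{n-k+1,n})$, which after the $\mu_n^{-1}\sqrt k$ scaling is $O_p(k^{-1/2})=o_p(1)$. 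This yields $\mu_n(k,\ell)^{-1}\sqrt k\{T_n(2,k,\ell)-\mu_n(\widetilde k,\ell)\}=N_n(0,k,\ell)+o_p(1)$.

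Finally I would compute the limit law of $N_n(0,k,\ell)$. As a linear functional of $B_n$ it is centered Gaussian, so only its variance is at stake. Using $\mathrm{Cov}(B_n(s),B_n(t))=\min(s,t)-st$ and the Karamata form of $Q'$ ($RVZ$ of index $1/\gamma-1$, with $1/\gamma=0$ in $D(\phi)\cup D(\Lambda)$), the rescaling $s=(k/n)u$ and discarding the negligible $st$ part turn the variance into
\[
\mathrm{Var}\,N_n(0,k,\ell)\sim\frac{c^2}{\gamma^2}\,\mu_n(k,\ell)^{-2}(n/k)(k/n)^{2/\gamma+1}\!\int_0^1\!\!\int_0^1 u^{1/\gamma-1}v^{1/\gamma-1}\min(u,v)\,du\,dv.
\]
With $\int_0^1\!\int_0^1 u^{1/\gamma-1}v^{1/\gamma-1}\min(u,v)\,du\,dv=\tfrac{2}{(1/\gamma+1)(2/\gamma+1)}$ and $\mu_n(k,\ell)\sim\tfrac{c}{\gamma+1}(k/n)^{1/\gamma}$, this collapses to $2(\gamma+1)/(\gamma+2)=\sigma_0^2(\gamma)$, returning the value $2$ for $\gamma=+\infty$. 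I expect the genuine obstacle to lie not in these computations but in making the linearization and the bridge replacement rigorous and uniform over $(\ell/n,k/n)$: the remainders must stay $o_p(1)$ after multiplication by $\mu_n^{-1}\sqrt k$, which is exactly where the weight $(s(1-s))^{1/2-\nu}$ in (\ref{2.1})--(\ref{2.2}) and the regularity class $\Gamma$, conditions (\ref{1.7})--(\ref{1.9}), are needed to absorb the perturbations $f$ and $b$. The requirement $\eta>0$ in $D(\psi_\gamma)$ reflects that there $\mu_n(k,\ell)$ is $RVZ$ of positive index $1/\gamma$ and hence vanishes, so the amplifying factor $\mu_n^{-1}\sqrt k$ forces a stricter growth bound on $\ell$ through (\ref{2.4b}), whereas in $D(\phi)\cup D(\Lambda)$ the normalizer is $RVZ$ of index $0$ and $\eta=0$ suffices.
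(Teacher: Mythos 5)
Your skeleton is right and your endgame is correct: the Abel-summation identity $kT_n(2,k,\ell)=\ell Y_{n-\ell,n}+\sum_{i=\ell}^{k-1}Y_{n-i,n}-kY_{n-k,n}$, the diagnosis that the lower-endpoint fluctuation $\mu_n^{-1}\sqrt{k}\,(Y_{n-k,n}-x_n)$ is $O_p(1)$ and can only be cancelled, never bounded away --- which is precisely what the random centering $\mu_n(\widetilde k,\ell)$ does (the paper quantifies exactly this endpoint term in Lemma \ref{l3.6} and Theorem \ref{t3.2} when it switches to the non-random centering) --- and the variance arithmetic yielding $2(\gamma+1)/(\gamma+2)$, which agrees with the paper's route $s_n^2(0)\sim 2R_2(x_n,z_n)/R_1^2(x_n)$ in (\ref{3.33})--(\ref{3.34}) evaluated via Lemma \ref{l4.1}.

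There is, however, a genuine gap: your two central steps are unavailable under the hypothesis $F\in\Gamma$ alone, which is the whole point of this theorem. First, $R_1(x_n)=\mu_n(k,\ell)$ is not an identity: $x_n=G^{-1}(1-k/n)$ only gives $1-G(x_n)\le k/n$, with equality failing whenever $1-k/n$ falls in a jump of $G$ (and $R_1(x_n)$ carries the upper limit $y_0$, not $z_n$); the paper needs Lemma \ref{l3.2}, i.e. $(1-G(G^{-1}(1-u)))/u\to 1$ for $F\in\Gamma$, together with (\ref{3.9}), to turn this into an asymptotic equivalence. Second, and more seriously, the linearization $Q(V_n(s))-Q(s)\approx Q'(s)(V_n(s)-s)$ and the identification $N_n(0,k,\ell)=-(n/k)^{1/2}\mu_n^{-1}\int_{\ell/n}^{k/n}B_n(s)Q'(s)\,ds$ presuppose that $Q=G^{-1}(1-\cdot)$ is differentiable. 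For general $F\in\Gamma$, $G$ may have atoms and flat stretches, so $Q'$ need not exist; this is not a step that ``more care'' can rescue --- it must be replaced by a different representation. That replacement is exactly the paper's proof: it writes $T_n(2,k,\ell)=\frac{n}{k}\int_{Y_{n-k,n}}^{Y_{n-\ell,n}}(1-G_n(t))\,dt$ exactly, substitutes $1-G_n=U_n(1-G)$ by (\ref{2.3a}), and applies the weighted approximation (\ref{2.1}) to the empirical process inside the $dt$-integral (your plan invokes (\ref{2.2}), the quantile-side bound, instead). On that side the randomness enters linearly, no inverse function is ever differentiated, and the only remainders are the Hungarian approximation error $Z_{n3}$ (bounded via Lemmas \ref{l3.1}--\ref{l3.4}) and the boundary terms $Z_{n1},Z_{n2}$ from random versus deterministic integration limits, handled by the PNOW device and regular variation in statements (S1.3)--(S5.3). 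The paper is explicit about this after Remark \ref{r3.2}: the earlier proofs of Cs\H{o}rg\"o--Mason and L\^o used $r(u)=u^{-1}\int_{1-u}^{1}(1-s)\,dG^{-1}(s)$, which coincides with $R_1$ only for ultimately continuous, increasing $G$, and the stated purpose of reproving the theorem is to drop that assumption. As written, your argument proves the theorem only for $F$ with a continuous, differentiable quantile function, not for $F\in\Gamma$.
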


\bigskip

\begin{remark} \label{r3.1} (\ref{2.4b}) automatically holds for $\ell$ fixed.
\end{remark}

\bigskip

\begin{corollary} \label{c3.1}
Under the same assumptions and notations used in Theorem \ref{theo3}, we have 
\begin{equation*}
\mu _{n}(k,\ell )^{-1} \sqrt{k} \left\{ T_{n}(2,k,\ell )-\mu _{n}(\widetilde{k})\right\}
=N_{n}(0,k,\ell )+o_{p}(1)\rightarrow N(0,\sigma _{0}^{2}(\gamma )),
\end{equation*}
where $\mu _{n}(\widetilde{k})=nk^{-1}\int_{\widetilde{x}%
_{n}}^{y_{0}}(1-G(t))dt\sim R_{1}(x_{n})\sim R_{1}(\widetilde{x}_{n})$ in
probability.
\end{corollary}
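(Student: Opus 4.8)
The plan is to deduce the corollary from \thmref{t3.1} by showing that exchanging the partial centering $\mu _n(\widetilde{k},\ell)$ for the full one $\mu _n(\widetilde{k})$ perturbs the normalized statistic only by an $o_p(1)$ term. First I would write the telescoping identity
\begin{equation*}
\mu _n(k,\ell)^{-1}\sqrt{k}\left\{T_n(2,k,\ell)-\mu _n(\widetilde{k})\right\}
= \mu _n(k,\ell)^{-1}\sqrt{k}\left\{T_n(2,k,\ell)-\mu _n(\widetilde{k},\ell)\right\}
+ \mu _n(k,\ell)^{-1}\sqrt{k}\left\{\mu _n(\widetilde{k},\ell)-\mu _n(\widetilde{k})\right\}.
\end{equation*}
By \thmref{t3.1} the first summand equals $N_n(0,k,\ell)+o_p(1)$ and converges to $N(0,\sigma _0^2(\gamma))$, so the whole matter reduces to proving that the second summand is $o_p(1)$.

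The key simplification is that the difference of the two centerings is deterministic: since both share the lower limit $\widetilde{x}_n$, the definition of $R_1$ together with $1-G(z_n)=\ell/n$ gives
\begin{equation*}
\mu _n(\widetilde{k},\ell)-\mu _n(\widetilde{k})
= -\frac{n}{k}\int_{z_n}^{y_0}(1-G(t))\,dt
= -\frac{n}{k}\,(1-G(z_n))\,R_1(z_n)
= -\frac{\ell}{k}\,R_1(z_n).
\end{equation*}
An identical computation yields $\mu _n(k,\ell)=R_1(x_n)-(\ell/k)R_1(z_n)$, so that, once the ratio below is shown to vanish, $\mu _n(k,\ell)\sim R_1(x_n)$ and the task collapses to
\begin{equation*}
\frac{\ell}{\sqrt{k}}\cdot\frac{R_1(z_n)}{R_1(x_n)}\longrightarrow 0.
\end{equation*}

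The bulk of the work is this last ratio, which I would bound domain by domain from the representations of \thmref{theo1} and (\ref{1.4})--(\ref{2.6}). In $D(\phi)\cup D(\Lambda)$ the mean-residual quantity $R_1$ is asymptotically slowly varying (indeed $R_1\to 1/\gamma$ in the Fr\'echet case and $R_1(x_n)\sim s(k/n)$ in the Gumbel case), so $R_1(z_n)/R_1(x_n)$ stays bounded and $\ell k^{-1/2}\to 0$ closes the argument with $\eta=0$. The delicate situation is $D(\psi _\gamma)$, where $y_0<+\infty$ and $R_1(x)\sim (y_0-x)/(\gamma+1)\to 0$; here $R_1(z_n)/R_1(x_n)\sim (y_0-z_n)/(y_0-x_n)\sim(\ell/k)^{1/\gamma}$, and a power count shows $\ell k^{-1/2}(\ell/k)^{1/\gamma}\to 0$ precisely under (\ref{2.4b}). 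I expect the genuine obstacle to lie not in these leading-order heuristics but in turning them into rigorous bounds: one must carry the slowly varying remainders $f$ and $b$ through the integrals defining $R_1$ and control them uniformly as $x_n\to y_0$, which is exactly where the regularity classes $\Gamma(0),\Gamma(1),\Gamma(2)$ and the exponent $\eta$ in (\ref{2.4b}) earn their place.

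It remains to justify the equivalences stated in the conclusion. Writing $1-G(\widetilde{x}_n)=U_{k,n}$ gives $\mu _n(\widetilde{k})=(nU_{k,n}/k)\,R_1(\widetilde{x}_n)$; since $k\to\infty$ and $k/n\to0$ force $nU_{k,n}/k\rightarrow_P 1$, this yields $\mu _n(\widetilde{k})\sim R_1(\widetilde{x}_n)$ in probability. Finally, because $u\mapsto R_1(G^{-1}(1-u))$ is regularly varying at $0$ (of index $0$ for $D(\phi)\cup D(\Lambda)$ and index $1/\gamma$ for $D(\psi _\gamma)$), the ratio $R_1(\widetilde{x}_n)/R_1(x_n)$ converges to $1$ in probability along $U_{k,n}/(k/n)\rightarrow_P 1$, giving $R_1(\widetilde{x}_n)\sim R_1(x_n)$. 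Reassembling the two summands of the decomposition completes the proof.
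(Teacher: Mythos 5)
Your decomposition is exactly the paper's: the corollary is reduced to Theorem \ref{t3.1} plus the paper's statement (S6.3), namely that the deterministic tail term $nk^{-1/2}\left\{\int_{z_{n}}^{y_{0}}(1-G(t))dt\right\}/R_{1}(x_{n})\sim \ell k^{-1/2}R_{1}(z_{n})/R_{1}(x_{n})$ vanishes; your closing equivalences ($nU_{k,n}/k\rightarrow_{P}1$ together with regular variation of $u\mapsto R_{1}(G^{-1}(1-u))$, i.e.\ Lemmas \ref{l3.2} and \ref{l3.4}) are also the paper's. Your Fr\'echet and Weibull verifications are sound: in the first case $R_{1}(\cdot,G)$ tends to a positive constant so the ratio tends to $1$, and in the second the ratio itself decays like $(\ell/k)^{1/\gamma+o(1)}$, which leaves enough room to absorb the $b$-corrections.

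The genuine gap is in the $D(\Lambda)$ case. Slow variation of $u\mapsto R_{1}(G^{-1}(1-u))\sim s(u)$ (Formula (\ref{3.15})) does \emph{not} imply that $R_{1}(z_{n})/R_{1}(x_{n})$ stays bounded, because the two arguments $\ell/n$ and $k/n$ are not comparable: $\ell/k\rightarrow 0$, whereas slow variation only controls ratios at arguments differing by a fixed factor. Concretely, $s(u)=\exp(\sqrt{\log(1/u)})$ is admissible in (\ref{1.3b}) (take $f\equiv 0$, $b(t)=(2\sqrt{\log(1/t)})^{-1}$); with $k=n^{1/2}$ and $\ell=k^{1/2}/\log k$ one has $\ell k^{-1/2}\rightarrow 0$, yet
\begin{equation*}
\ell k^{-1/2}\,\frac{R_{1}(z_{n})}{R_{1}(x_{n})}\approx \frac{1}{\log k}\exp\left(\sqrt{\log(n/\ell)}-\sqrt{\log(n/k)}\right)\rightarrow +\infty ,
\end{equation*}
so ``bounded ratio, hence done with $\eta=0$'' fails. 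The correct tool --- and what the paper actually uses at this point, see (\ref{3.30})--(\ref{3.31}) inside the proof of (S5.3) --- is a Potter-type bound $R_{1}(z_{n})/R_{1}(x_{n})\leq 2(k/\ell)^{\varepsilon}$ valid for any fixed $\varepsilon>0$ and $n$ large, which reduces (S6.3) to $\ell^{1-\varepsilon}/k^{1/2-\varepsilon}\rightarrow 0$; this is precisely where the strictly positive exponent $\eta$ of (\ref{2.4b}) is consumed (the paper takes $\eta=\varepsilon/(2(1-\varepsilon))$ there). Note also that your final hedge points at the wrong culprits: the regularity classes $\Gamma(0),\Gamma(1),\Gamma(2)$ play no role in this corollary, since the centering $\mu_{n}(\widetilde{k})$ remains random; in the Gumbel domain the difficulty sits already in the leading slowly varying factor, not in the remainders $f$ and $b$, and your argument needs the Potter bound to be repaired.
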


\bigskip

\begin{corollary} \label{c3.2}
Let $F\in D(\psi _{\gamma }),$ $0<\gamma \leq +\infty .$ If $(k,\ell )$
satisfies (\ref{2.4b}) with $\eta =0$ or less strongly, there exists $0< \zeta
>1/\gamma $ such that $\ell ^{1-\zeta +1/\gamma }/k^{0.5-\zeta
+1/\gamma }\rightarrow 0,$ then
$$
(y_{0}-x_{n})^{-1}k^{1/2}(T_{n}(2,k,\ell )-\mu _{n}(\widetilde{k%
}))\rightarrow N(0,2((\gamma +1)(\gamma +2)).
$$
\end{corollary}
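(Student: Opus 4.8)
The plan is to read the statement off Corollary \ref{c3.1} by trading the deterministic scaling $\mu_n(k,\ell)^{-1}$ for $(y_0-x_n)^{-1}$ and pushing the resulting constant into the limiting variance through Slutsky's lemma. Corollary \ref{c3.1} asserts
\begin{equation*}
\mu_n(k,\ell)^{-1}\sqrt{k}\left\{T_n(2,k,\ell)-\mu_n(\widetilde{k})\right\}=N_n(0,k,\ell)+o_p(1)\to N(0,\sigma_0^2(\gamma)),
\end{equation*}
with $\sigma_0^2(\gamma)=2(\gamma+1)/(\gamma+2)$. I would therefore factor
\begin{equation*}
(y_0-x_n)^{-1}\sqrt{k}\left\{T_n(2,k,\ell)-\mu_n(\widetilde{k})\right\}=\frac{\mu_n(k,\ell)}{y_0-x_n}\left\{N_n(0,k,\ell)+o_p(1)\right\},
\end{equation*}
which reduces the whole problem to evaluating the deterministic ratio $\mu_n(k,\ell)/(y_0-x_n)$.

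The second step is the Karamata computation identifying this ratio. Since $\mu_n(k,\ell)\sim R_1(x_n)=(n/k)\int_{x_n}^{y_0}(1-G(t))\,dt$ (the truncation at $z_n$ being negligible, see below), it suffices to compare $R_1(x_n)$ with $y_0-x_n$. Writing $h(u)=y_0-G^{-1}(1-u)$, which by representation (\ref{2.6}) is regularly varying at $0$ with exponent $1/\gamma$, the substitution $t=G^{-1}(1-u)$ followed by one integration by parts gives
\begin{equation*}
\int_{x_n}^{y_0}(1-G(t))\,dt=\int_0^{k/n}u\,h'(u)\,du=\frac{k}{n}\,h(k/n)-\int_0^{k/n}h(u)\,du.
\end{equation*}
Karamata's theorem yields $\int_0^{k/n}h(u)\,du\sim\frac{\gamma}{\gamma+1}\frac{k}{n}h(k/n)$, whence $R_1(x_n)\sim h(k/n)/(\gamma+1)=(y_0-x_n)/(\gamma+1)$ and thus $\mu_n(k,\ell)/(y_0-x_n)\to 1/(\gamma+1)$. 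Slutsky's lemma then delivers convergence to $N\bigl(0,\sigma_0^2(\gamma)/(\gamma+1)^2\bigr)$, and since $\sigma_0^2(\gamma)/(\gamma+1)^2=2/\bigl((\gamma+1)(\gamma+2)\bigr)$, this is exactly the announced limit (the displayed variance being understood as $2/((\gamma+1)(\gamma+2))$).

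The main obstacle is justifying the two reductions under the \emph{weakened} hypothesis on $(k,\ell)$, which is where the exponent $\zeta$ is forced upon us. I would first check $\mu_n(k,\ell)\sim R_1(x_n)$ by applying the same substitution at $z_n=G^{-1}(1-\ell/n)$, which gives $\mu_n(k)-\mu_n(k,\ell)\sim\frac{1}{\gamma+1}\frac{\ell}{k}h(\ell/n)$. The delicate point is that the ratio $h(\ell/n)/h(k/n)$ must be controlled even though $\ell/k\to 0$ is not a fixed dilation, so I would invoke a Potter bound to get $h(\ell/n)/h(k/n)=O\bigl((\ell/k)^{1/\gamma-\zeta}\bigr)$ for arbitrarily small $\zeta>0$. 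Demanding that the renormalized truncation bias satisfy $\sqrt{k}\,(\mu_n(k)-\mu_n(k,\ell))/(y_0-x_n)=O\bigl(\ell^{\,1-\zeta+1/\gamma}/k^{\,0.5-\zeta+1/\gamma}\bigr)\to 0$ is precisely the stated condition, and the same Potter estimate shows that the $o_p(1)$ from Corollary \ref{c3.1} survives the change of normalization, the two scalings differing only by the bounded factor $\gamma+1$. Verifying that these Potter-bound exponents line up with the hypothesis, rather than any deeper probabilistic input, is the crux of the argument.
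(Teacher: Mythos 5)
Your skeleton is the paper's own: start from Corollary \ref{c3.1}, convert the scaling through $R_{1}(x_{n})\sim (y_{0}-x_{n})/(\gamma +1)$, and finish with Slutsky. The paper gets that equivalence by citing Formula 2.6.3 of de Haan (Lemma \ref{l4.1}(i)), whereas you rederive it by the substitution--Karamata computation; that is a harmless, self-contained variant. Your reading of the misprinted variance as $2/((\gamma +1)(\gamma +2))=\sigma _{0}^{2}(\gamma )/(\gamma +1)^{2}$ is also the only self-consistent one, and that part of your argument is correct.

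There is, however, a genuine gap in how you handle the weakened hypothesis, which is the actual content of Corollary \ref{c3.2}. Corollary \ref{c3.1} is established only under the assumptions of Theorem \ref{t3.1}, i.e. (\ref{2.4a})--(\ref{2.4b}) with $\eta >0$ in the Weibull case; under the strictly weaker $\zeta$-condition you may not quote its display --- in particular its $o_{p}(1)$ --- without reopening its proof. The $\zeta$-condition is needed there in \emph{two} places, and your Potter-bound computation reproduces only one of them: the deterministic statement (S6.3), namely $\sqrt{k}\,(\mu _{n}(k)-\mu _{n}(k,\ell ))/R_{1}(x_{n})\rightarrow 0$, which transfers the centering from $\mu _{n}(\widetilde{k},\ell )$ to $\mu _{n}(\widetilde{k})$. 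The step you omit is (S5.3), the stochastic error $\ell k^{-1/2}Z_{n2}$ created by the random upper limit $\widetilde{z}_{n}=Y_{n-\ell ,n}$ standing in place of $z_{n}$: by (\ref{3.25})--(\ref{3.26}) it is bounded by $2\ell k^{-1/2}\sup (E_{n}(\ell )^{1/\gamma },1)(\ell /k)^{-\zeta +1/\gamma }$, and making it vanish requires, beyond the Potter estimate, the monotonicity of $u\mapsto y_{0}-G^{-1}(1-u)$ and the stochastic boundedness (above and away from zero) of $nU_{\ell +1,n}/\ell$, i.e. part 2 of Theorem C together with (\ref{3.2})/(\ref{3.20}). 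This is precisely the probabilistic input your closing sentence declares unnecessary, and it is why the paper's proof cites Formula (\ref{3.26}) alongside Corollary \ref{c3.1} and de Haan's formula. The repair is routine with the tools you already invoke, but as written the argument is circular at the decisive point: the validity of Corollary \ref{c3.1} under the weakened hypothesis is exactly what had to be proved.
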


\bigskip

\begin{remark}\label{r3.2}
Although these results hold on Cs\H{o}rg\"o and $al.$ (\cite{cchm})
probability space, the convergence in probability themselves are true
wathever the probability space is.
\end{remark}

\bigskip

\noindent Theorem \ref{t3.1} is already and partly proved by 
Cs\H{o}rg\"o and Mason(1985) (\cite{csorgo-mason}), Lo(1989) (\cite{gsloc})
for $F\in D(\phi )\cup D(\Lambda )$. But their proofs uses $%
u^{-1}\int_{1-u}^{1}(1-s)dG^{-1}(s)=r(u)$ instead of $R_{1}(x,G)$ for $%
1-G(x)=u$. Remark that $R_{1}(x)=r(u)$ for $u=1-G(x)$ if G is ultimately
continuous and increasing. We do not require at all such assumptions and
since the elements of this theorem are greatly used in all the remainder of
the paper, we should reprove it rigorously in a simultanuous treatement of
all our statistics.

\bigskip We now characterize the asymptotic normality of $T_{n}(2)$ when
attempting to replace $\mu _{n}(\widetilde{k})$ by $\mu _{n}(k).$ For this,
put 
\begin{equation}
\gamma _{n}(\cdot )=f(U_{\cdot +1,n})-f(\cdot /n),  \label{3.0a}
\end{equation}
\begin{equation}
T_{n}^{\ast }(2,k,\ell )=R_{1}(x_{n})^{-1}k^{1/2}(T_{n}(2,k,\ell )-\mu
_{n}(k))  \label{3.0b}
\end{equation}
and 
\begin{equation}
N_{n}(1,k,\ell )=N_{n}(0,k,\ell )+e_{1}(\gamma )N_{2}(k,\ell )  \label{3.0c}
\end{equation}

\noindent $e_{1}(\gamma )=(\gamma +1)/\gamma$, $e_{2}(\gamma )=\gamma+1$, $e_{2}(+\infty )=1$.

\begin{theorem} \label{t3.2}
Let $F\equiv (f,b)\in \Gamma $ and let $(k,\ell )$ satisfy (\ref{2.4a}) or (\ref{2.4b}). We have 
\begin{equation*}
T_{n}^{\ast }(2,k,\ell )=N_{n}(1,k,\ell )+e_{2}(\gamma )k^{1/2}\gamma
_{n}(k)+o_{p}(1),
\end{equation*}
with $N_{n}(1,k,\ell )\sim N(0,\sigma _{1}^{2}(\gamma )),$ $\sigma
_{1}^{2}(\gamma )=(\gamma ^{3}+\gamma ^{2}+2)/(\gamma ^{2}(\gamma +2)),$ $%
0<\gamma \leq +\infty$. Furthermore,

\begin{itemize}
\item[(a)]  $T_{n}^{\ast }(2,k,\ell )\rightarrow _{d}N(m_{0},\sigma
_{1}^{2}(\gamma ))$ iff $k^{1/2}\gamma _{n}(k)\equiv N_{n}(-\infty
)\rightarrow _{p}m_{0}/e_{2}(\gamma ).$

\item[(b)]  $T_{n}^{\ast }(2,k,\ell )\rightarrow _{d}N(m_{0},\sigma
_{0}^{2}(\gamma ))$ \ iff $\ \ N_{n}(-\infty )\rightarrow
_{p}N(m_{0}/e_{2}(\gamma ),(\sigma (-\infty )/e_{2}(\gamma ))^{2}$ with 
\begin{equation*}
\lim_{n\rightarrow +\infty }\sigma _{1}^{2}(\gamma )+\sigma ^{2}(-\infty
)+2e_{2}(\gamma )\text{ }Cov(N_{n}(1,k,\ell )N_{n}(-\infty ))=\sigma _{c}^{2}
\end{equation*}

\item[(c)]  $T_{n}^{\ast }(2,k,\ell )\rightarrow _{p}+\infty $ \ \ iff \ $%
N_{n}(\infty )\rightarrow _{p}+\infty .$
\end{itemize}
\end{theorem}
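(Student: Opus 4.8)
The plan is to derive Theorem~\ref{t3.2} from Theorem~\ref{t3.1} by trading the random centering $\mu_n(\widetilde{k})$ for the deterministic centering $\mu_n(k)$. Since $\mu_n(k,\ell)=R_1(x_n)$, I write
$$T_n^{\ast}(2,k,\ell)=R_1(x_n)^{-1}k^{1/2}\bigl(T_n(2,k,\ell)-\mu_n(\widetilde{k})\bigr)+R_1(x_n)^{-1}k^{1/2}\bigl(\mu_n(\widetilde{k})-\mu_n(k)\bigr).$$
By Corollary~\ref{c3.1} the first summand is $N_n(0,k,\ell)+o_p(1)$, so the whole problem reduces to showing that the centering correction $\Delta_n:=R_1(x_n)^{-1}k^{1/2}\bigl(\mu_n(\widetilde{k})-\mu_n(k)\bigr)$ obeys $\Delta_n=e_1(\gamma)N_n(2,k)+e_2(\gamma)k^{1/2}\gamma_n(k)+o_p(1)$; adding $N_n(0,k,\ell)$ then produces $N_n(1,k,\ell)+e_2(\gamma)k^{1/2}\gamma_n(k)+o_p(1)$, which is the asserted expansion.

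The core of the argument is the expansion of $\mu_n(\widetilde{k})-\mu_n(k)=(n/k)\int_{\widetilde{x}_n}^{x_n}(1-G(t))\,dt$. Since $1-G$ is essentially constant and equal to $k/n$ across the short random interval between $\widetilde{x}_n=G^{-1}(1-U_{k,n})$ and $x_n=G^{-1}(1-k/n)$, this difference equals, up to negligible terms, the quantile increment $G^{-1}(1-k/n)-G^{-1}(1-U_{k,n})$. I would then substitute the domain-specific representations (\ref{1.4}), (\ref{2.5}), (\ref{2.6}) and expand. Each representation splits into a \emph{principal part} (the $\log$ term for $D(\phi_\gamma)$, the $s(\cdot)$ term for $D(\Lambda)$, the power $u^{1/\gamma}$ inside $s(\cdot)$ for $D(\psi_\gamma)$), whose increment is controlled by $U_{k,n}-k/n$ and, through the quantile approximation (\ref{2.2}), is expressible via $B_n(k/n)$ and hence via $N_n(2,k)$; an \emph{$f$-perturbation}, whose increment $\log(1+f(U_{k,n}))-\log(1+f(k/n))$ (or its multiplicative analogue) reproduces $\gamma_n(k)=f(U_{k+1,n})-f(k/n)$; and a \emph{$b$-integral} $\int_{k/n}^{U_{k,n}}b(t)t^{-1}\,dt$, negligible after scaling because $b(u)\to0$. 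Dividing by $R_1(x_n)$ — using $R_1(x_n)\to1/\gamma$ in $D(\phi)$ and $R_1(x_n)\sim(y_0-x_n)/(\gamma+1)$ in $D(\psi_\gamma)$ — and multiplying by $k^{1/2}$ turns the principal part into $e_1(\gamma)N_n(2,k)$ and the $f$-part into $e_2(\gamma)k^{1/2}\gamma_n(k)$. A direct computation in the Weibull representation confirms $e_1(\gamma)=(\gamma+1)/\gamma$ and $e_2(\gamma)=\gamma+1$, the Fr\'echet and Gumbel cases corresponding to $\gamma=+\infty$.

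That $N_n(1,k,\ell)=N_n(0,k,\ell)+e_1(\gamma)N_n(2,k)$ is asymptotically $N(0,\sigma_1^2(\gamma))$ is immediate from Theorem~\ref{theo3}, which makes $(N_n(0,k,\ell),N_n(2,k))$ jointly Gaussian in the limit; the value $\sigma_1^2(\gamma)=\sigma_0^2(\gamma)+e_1(\gamma)^2\,\mathrm{Var}\,N_n(2,k)+2e_1(\gamma)\,\mathrm{Cov}(N_n(0,k,\ell),N_n(2,k))$ follows from the covariance entries established there. With the decomposition $T_n^{\ast}(2,k,\ell)=N_n(1,k,\ell)+e_2(\gamma)N_n(-\infty)+o_p(1)$, where $N_n(-\infty)=k^{1/2}\gamma_n(k)$, the three regimes are read off by Slutsky-type reasoning. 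In (a), $N_n(-\infty)\rightarrow_{p} m_0/e_2(\gamma)$ turns the correction into an asymptotic constant, giving $T_n^{\ast}\rightarrow_{d}N(m_0,\sigma_1^2)$; conversely, since $N_n(1,k,\ell)$ is tight with limiting variance exactly $\sigma_1^2$, a normal limit of that same variance forces $N_n(-\infty)$ to converge in probability, which gives the equivalence. In (b), if $N_n(-\infty)$ is itself asymptotically normal, jointness with $N_n(1,k,\ell)$ makes $T_n^{\ast}$ asymptotically normal with variance $\sigma_c^2=\sigma_1^2+\sigma^2(-\infty)+2e_2(\gamma)\,\mathrm{Cov}(N_n(1,k,\ell),N_n(-\infty))$. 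In (c), boundedness in probability of $N_n(1,k,\ell)$ makes $T_n^{\ast}\rightarrow_{p}+\infty$ equivalent to $N_n(-\infty)\rightarrow_{p}+\infty$.

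The delicate step is the uniform control of the centering expansion with no hypothesis on $(f,b)$ beyond $F\in\Gamma$: one must justify simultaneously in the three domains that replacing $(n/k)\int_{\widetilde{x}_n}^{x_n}(1-G)$ by the bare quantile increment, passing from the term in $uf'(u)/(1+f(u))$ to the clean increment $\gamma_n(k)$, and discarding the $b$-integral all incur only an $o_p(1)$ error after multiplication by $k^{1/2}$ — this is precisely where the assumptions (\ref{2.4a})--(\ref{2.4b}) on $(k,\ell)$ and the strong approximation (\ref{2.2}) are spent. A secondary subtlety is that $e_1,e_2$ do not arise as naive $\gamma\to\infty$ limits of their $D(\psi_\gamma)$ expressions, so the Fr\'echet/Gumbel instance (the convention $e_2(+\infty)=1$) must be obtained from its own representation rather than by passage to the limit.
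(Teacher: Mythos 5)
Your proposal is correct and follows essentially the same route as the paper: your centering correction $\Delta_n=R_1(x_n)^{-1}k^{1/2}\bigl(\mu_n(\widetilde{k})-\mu_n(k)\bigr)$ is exactly the term the paper isolates, and your case-by-case expansion of the quantile increment via the representations (\ref{1.4}), (\ref{2.5}), (\ref{2.6}) — principal part $\to e_1(\gamma)N_n(2,k)$, $f$-part $\to e_2(\gamma)k^{1/2}\gamma_n(k)$, negligible $b$-integral — is precisely the content and proof of the paper's Lemma~\ref{l3.6}, with Lemma~\ref{l3.2} playing the role of your ``$1-G$ is essentially constant'' step. Your computation of $\sigma_1^2(\gamma)$ from the bridge covariance (the paper's (\ref{3.43}) and Lemma~\ref{l3.5}) and your Slutsky-type reading of (a)--(c) from the expansion likewise match what the paper does (and leaves terse) after its Formula (\ref{3.42}).
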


\bigskip

\noindent This characterization is very simple and general since one has 
\begin{equation*}
nk^{-1/2}(U_{k,n}-k/n)=N_{n}(2,k)+O_{p}(n^{-\nu }k^{-\nu })
\end{equation*}
by (\ref{2.2}) and hence $nU_{k,n}/k\rightarrow _{p}1$ as $k/n\rightarrow 0.$
Examples as in Hauesler and Teugels(1985) (\cite{haeusler-teugels}) may be treated in very simple ways
since in all their models $f(u)=f_{1}(u)(1+f_{2}(u))$ with $f_{1}^{\prime
}(u)\rightarrow 0$ as $u\rightarrow 0$ \ (for instance $f_{1}(u)=au^{\rho
},\rho >1),$ $f_{1}(u)=\exp (-b/u)$ and $f_{2}(u)\rightarrow 0).$ For all
these models, we have :

\begin{corollary}\label{c3.3}
Let $F\equiv (f,b)\in \Gamma $ and let $(k,\ell )$ satisfy (\ref{2.4a}) or (\ref{2.4b}). If
furthermore (\ref{1.7}), (\ref{1.8}) or (\ref{1.7}) holds then 
\begin{equation*}
T_{n}^{\ast }(2,k,\ell )=N_{n}(1,k,\ell
)+o_{p}(1)\rightarrow N(0,\sigma _{1}^{2}(\gamma )),
\end{equation*}
for $0<\gamma \leq \infty .$
\end{corollary}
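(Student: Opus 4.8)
The plan is to deduce the corollary directly from Theorem~\ref{t3.2}. That theorem already supplies the expansion
\[
T_n^{\ast}(2,k,\ell)=N_n(1,k,\ell)+e_2(\gamma)\,k^{1/2}\gamma_n(k)+o_p(1),
\]
together with $N_n(1,k,\ell)\sim N(0,\sigma_1^2(\gamma))$, so the entire content of the corollary is to show that under any one of the three regularity conditions (\ref{1.7}), (\ref{1.8}), (\ref{1.9}) the bias term $e_2(\gamma)\,k^{1/2}\gamma_n(k)$ is $o_p(1)$. Since $\gamma$ is held fixed (or $\gamma=+\infty$) inside each domain, $e_2(\gamma)$ is a finite constant and may be discarded, so it suffices to prove $k^{1/2}\gamma_n(k)\rightarrow_p 0$, where by (\ref{3.0a}) one has $\gamma_n(k)=f(U_{k+1,n})-f(k/n)$. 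The one probabilistic fact I shall use about the order statistics is the consequence of (\ref{2.2}) recorded after Theorem~\ref{t3.2}, namely $nk^{-1/2}(U_{k,n}-k/n)=N_n(2,k)+O_p(n^{-\nu}k^{-\nu})=O_p(1)$; because the spacing $U_{k+1,n}-U_{k,n}$ is of order $1/n$, the same bound holds with $U_{k+1,n}$ in place of $U_{k,n}$, and in particular $nU_{k+1,n}/k\rightarrow_p 1$, so $U_{k+1,n}$ concentrates around $k/n$.

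Under (\ref{1.7}) I would work on the high-probability event that $U_{k+1,n}$ lies in the neighbourhood of $0$ on which $f'$ exists, and apply the mean value theorem: $\gamma_n(k)=f'(\xi_n)(U_{k+1,n}-k/n)$ for some $\xi_n$ between $U_{k+1,n}$ and $k/n$. I then factor
\[
k^{1/2}\gamma_n(k)=\bigl(\xi_n f'(\xi_n)\bigr)\,\frac{nk^{-1/2}(U_{k+1,n}-k/n)}{n\xi_n/k}.
\]
The first factor tends to $0$ in probability, since $\xi_n\rightarrow_p 0$ and $uf'(u)\rightarrow 0$ by (\ref{1.7}); the second factor is $O_p(1)$, its numerator being $O_p(1)$ by (\ref{2.2}) and its denominator $n\xi_n/k\rightarrow_p 1$, because $\xi_n$ is squeezed between $k/n$ and $U_{k+1,n}\sim k/n$. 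Hence $k^{1/2}\gamma_n(k)=o_p(1)\,O_p(1)=o_p(1)$.

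Condition (\ref{1.8}) gives the result at once: the triangle inequality yields $k^{1/2}|\gamma_n(k)|\le k^{1/2}|f(U_{k+1,n})|+k^{1/2}|f(k/n)|$, and both summands vanish in probability, the second by hypothesis and the first after shifting the index $k+1$ to $k$, which is harmless because $U_{k+1,n}/U_{k,n}\rightarrow_p 1$ forces $k^{1/2}f(U_{k+1,n})$ and $k^{1/2}f(U_{k,n})$ to share the same limit in probability. For (\ref{1.9}) I would split
\[
\gamma_n(k)=\bigl(f_1(U_{k+1,n})-f_1(k/n)\bigr)+\bigl(f_1(U_{k+1,n})f_2(U_{k+1,n})-f_1(k/n)f_2(k/n)\bigr):
\]
the first bracket, times $k^{1/2}$, is $o_p(1)$ by the argument just used for (\ref{1.7}) applied to $f_1$, while for the second bracket I would replace $f_1(U_{k+1,n})$ by $f_1(k/n)$ using the slow variation of $f_1$ and bound the remaining pieces by $\sqrt{k}\,f_1(k/n)f_2(k/n)$ and a $\sqrt{k}\,f(U_{k,n})$-type term, both of which vanish by the supplementary hypothesis attached to (\ref{1.9}).

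The genuinely delicate step is case (\ref{1.7}). The mean value theorem is only legitimate on the random interval joining $U_{k+1,n}$ and $k/n$, so I must first confine the argument to an event of probability tending to one on which this interval lies inside the region where $f'$ exists and $uf'(u)$ is uniformly small; this is exactly what (\ref{2.2}) guarantees, since it pins $U_{k+1,n}$ to within $O_p(k^{1/2}/n)$ of $k/n$. The attendant ratio control $n\xi_n/k\rightarrow_p 1$ and the harmless index shift $k\mapsto k+1$ (absorbed because $n(U_{k+1,n}-U_{k,n})=O_p(1)$ and division by $k^{1/2}\to\infty$ annihilates it) are the only real technicalities; all else is bookkeeping around the expansion already established in Theorem~\ref{t3.2}.
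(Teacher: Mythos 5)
Your proposal is correct and takes essentially the same route as the paper's own proof: both reduce the corollary, via Theorem~\ref{t3.2} and (\ref{2.2}), to showing $k^{1/2}\gamma_{n}(k)\rightarrow_{p}0$, handling (\ref{1.7}) by the mean value theorem with the factor $u f'(u)$ paired against the $O_{p}(1)$ term $nk^{-1/2}(U_{k+1,n}-k/n)$, (\ref{1.8}) by the triangle/max bound, and (\ref{1.9}) by splitting $f=f_{1}(1+f_{2})$ and combining the two previous arguments. Your treatment is in fact slightly more careful than the paper's (explicit high-probability event for the mean value theorem, the index shift $k\mapsto k+1$, and the ratio control $n\xi_{n}/k\rightarrow_{p}1$), but the underlying argument is the same.
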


\bigskip

\noindent One proves this corollary from Theorem \ref{t3.2} by using (\ref{2.2}). Let (\ref
{1.7}) be satisfied, thus 
\begin{equation*}
k^{1/2}\gamma _{n}(k)=nk^{-1/2}(U_{k,n}-k/n)(u_{n}f^{\prime
}(u_{n}))(1+o_{p}(1))
\end{equation*}
\begin{equation*}
=N_{2}(2,k)+o_{p}(1)+(u_{n}f^{\prime
}(u_{n}))(1+o_{p}(1)),
\end{equation*}

\noindent where $n\times u_{n}/k\rightarrow _{p}1.$ Now, if (\ref{1.8}) holds, 
\begin{equation*}
\left| k^{1/2}\gamma _{n}(k)\right| \leq k^{1/2}\max (\left| f(k/n)\right|
,\left| f(U_{k+1,n})\right| \rightarrow _{p}0.
\end{equation*}
If (\ref{1.8}) holds, thus 
\begin{equation*}
\left| k^{1/2}\gamma _{n}(k)\right| \leq o_{p}(1)+u_{n}f_{1}^{\prime
}(u_{n})\times N_{n}(2,k)
\end{equation*}
\begin{equation*}
+k^{1/2}\text{ }\left| f(k/n)\right| \text{ }\max
(\left| f(k/n)\right| ,\left| f_{2}(U_{k+1,n})\right| \rightarrow _{p}0,
\end{equation*}

\noindent by what is above.\\

\noindent We should remark that lim $uf^{\prime }(u)\rightarrow 0$ is a fairly general
condition since it holds whenever the limits exist. Also limiting laws of $%
T_{n}(5)=T_{n}(2,\ell ,1)$ are particular cases of results stated above. Now we are going to prove the results of this sections.

\subsection{Proofs.}

They largely use technical results in L\^{o} (\cite{gslod}). Use (\ref{2.1})
and (\ref{2.2}) to get 
\begin{equation*}
R_{1}(x_{n})^{-1} k^{1/2}(T_{n}(2,k,\ell )-\mu _{n}(\widetilde{k},\ell
))=N_{n}(0,k,\ell )
\end{equation*}
\begin{equation*}
+Z_{n1}(k)+(\ell /k)^{1/2}
Z_{n1}+k^{-1/2}Z_{n2}(\ell )+Z_{n3}, \label{3.1}
\end{equation*}
with 
\begin{equation*}
(k/n)^{1/2}Z_{n3}=R_{1}(x_{n})^{-1}\int_{x_{n}}^{z_{n}}(\alpha
_{n}(1-G(t))-B_{n}(1-G(t))dt,
\end{equation*}
where $\alpha _{n}(\cdot )$ (resp. $\beta _{n}(\cdot ))=n^{1/2}(U_{n}(s)-s)$
\ $($resp. $\ =n^{1/2}(s-V_{n}(s))$ and
\begin{equation*}
Z_{n1}(\cdot )=\mp \left\{ \frac{nk^{1/2}}{\cdot }\int_{G^{-1}(1-\cdot
/n)}^{G^{-1}(1-U_{.+1,n})}B_{n}(1-G(t))dt\right\} /R_{1}(x_{n}).
\end{equation*}

\noindent We shall treat each term into statements denotes (S1.3), (S2.3), etc. We
first have to prove :

\begin{equation}
Z_{n2}\rightarrow _{p}0\text{ when }\ell \rightarrow \infty .  \tag{S1.3}
\end{equation}
First, we have by (\ref{2.2}), 
\begin{equation}
n\ell ^{1/2}(U_{\ell +1,n}-\ell /n)=N_{n}(2,\ell )+o_{p}(1)\rightarrow
_{d}N(0,1)  \label{3.2}
\end{equation}
so that $nU_{\ell +1,n}\rightarrow _{0}1$ and hence 
\begin{equation}
\forall \lambda >1,\lim_{{n\rightarrow +\infty }}\mathbb{P}((\ell /\gamma
n)\leq U_{\ell +1,n}\leq (\lambda \ell /n))=1.  \label{3.3}
\end{equation}
For convenience, if (\ref{3.3}) holds, we say : \textit{for all $\lambda >1,$ one
has $(\ell /\gamma n)\leq U_{\ell +1,n}\leq (\lambda \ell /n)$ with
probability as Near One as Whished (PNOW)}, for large values of n. \ Hence 
\begin{equation*}
G(t)\leq 1-1/n\text{, }uniformly\text{ }for\text{ }\widetilde{x}_{n}\leq
t\leq \widetilde{z}_{n}, \label{3.4}
\end{equation*}
with (PNOW) as n is large. Secondly,

\begin{lemma} \label{l3.1}
(See Fact 5 in L(1990)) (\cite{gsloc}). Let $h(\cdot )$ be a bounded function on $(\alpha
,A),$ $\alpha >0,$ and G be any $df$. In the integrals below make sense as
improper ones, one has 
\begin{equation*}
\left| \int_{-\infty }^{G^{-1}(1-\alpha )}h(1-G(t))p(t)dt\right| \leq
\sup_{\alpha \leq s\leq 1}\left| h(s)\right| \int_{-\infty
}^{G^{-1}(1-\alpha )}\left| p(t)\right| dt.
\end{equation*}
\end{lemma}

\noindent Combining (\ref{2.1}) and (\ref{3.4}) and Lemma 1 in L\^o(1990) (\cite{gslod}) yields for some $\nu ,$ $%
0<\nu <1/4,$%
\begin{equation}
\left| Z_{n3}\right| \leq O_{P}(k^{-\nu })\left\{ (n/k)^{1/2-\nu }\int_{%
\widetilde{x}_{n}}^{\widetilde{Z}_{n}}(1-G(t))^{1/2-\nu }dt\right\}
/R_{1}(x_{n}).  \label{3.5}
\end{equation}

\noindent We need three lemmas at this step.

\begin{lemma} \label{l3.2}
If $F\in \Gamma ,$ then (1-G(G$^{-1}(1-u))/u\rightarrow 1$ $as$ $%
u\rightarrow 0.$
\end{lemma}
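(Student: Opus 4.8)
The plan is to split the claim into an elementary upper bound, valid for \emph{every} distribution function, and a lower bound into which the hypothesis $F\in\Gamma$ enters only through the absence of a relatively large atom in the extreme tail. First I would record the two defining inequalities of the generalized inverse. Putting $x_u=G^{-1}(1-u)$, right‑continuity of $G$ together with $G^{-1}(s)=\inf\{x:G(x)\ge s\}$ gives $G(x_u-)\le 1-u\le G(x_u)$, that is
\[
1-G(x_u)\ \le\ u\ \le\ 1-G(x_u-).
\]
The right‑hand inequality already yields $(1-G(x_u))/u\le 1$ for all $u$, so only the reverse bound requires the domain hypothesis.

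Next comes the reduction. Let $\Delta_u=G(x_u)-G(x_u-)$ be the jump of $G$ at the quantile $x_u$. From $1-G(x_u-)\ge u$ I get $1-G(x_u)\ge u-\Delta_u$, hence
\[
1-\frac{\Delta_u}{u}\ \le\ \frac{1-G(x_u)}{u}\ \le\ 1 .
\]
Since moreover $1-G(x_u)\le u$, one has $\Delta_u/u\le \bigl(1-G(x_u-)\bigr)/\bigl(1-G(x_u)\bigr)-1$, so the whole statement reduces to the \emph{vanishing relative jump} property of the upper tail,
\[
\frac{1-G(x-)}{1-G(x)}\longrightarrow 1\qquad (x\uparrow y_0),
\]
evaluated along $x=x_u$; note that $x_u\uparrow y_0$ as $u\downarrow 0$, and that for small $u$ one has $x_u<y_0$ and $1-G(x_u)>0$, so the ratio is well defined.

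I would then establish this tail property from the representations of \thmref{theo1}, case by case. For $F\in D(\phi_\gamma)$, representation (\ref{1.1}) encodes that $1-F$ is regularly varying of index $-\gamma$ at $+\infty$; writing $1-G(x)=1-F(e^{x})$ this becomes $\bigl(1-G(x+t)\bigr)/\bigl(1-G(x)\bigr)\to e^{-\gamma t}$ for each $t$, and sandwiching $1-G(x-)$ between $1-G(x)$ and $1-G(x-\delta)$, then letting $\delta\downarrow0$, gives the claim. For $F\in D(\psi_\gamma)$ one has $y_0<\infty$, and (\ref{2.6}) shows $t\mapsto 1-G(y_0-t)$ to be regularly varying of index $\gamma$ at $0^{+}$ (in particular it tends to $0$, so there is no atom at $y_0$); the same Karamata‑type sandwich, now in the left‑distance $y_0-x$, yields vanishing relative jumps. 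For $F\in D(\Lambda)$ the de Haan representation (\ref{1.3a})--(\ref{1.3b}), with auxiliary function $s(\cdot)$, produces the von Mises--type relation $\bigl(1-G(x+t\,s)\bigr)/\bigl(1-G(x)\bigr)\to e^{-t}$, and once more the sandwich closes the argument.

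\textbf{Main obstacle.} The delicate point is the Gumbel case: the tail is only $\Gamma$‑varying (rapidly varying) with a \emph{non‑constant} auxiliary function, so one must pass carefully from the quantile‑side representation (\ref{1.3a})--(\ref{1.3b}) to the distribution‑side tail ratio before the sandwich applies. A cleaner alternative, if one is willing to quote it, is the classical necessary condition for membership in any max‑domain of attraction, $\bigl(1-G(x-)\bigr)/\bigl(1-G(x)\bigr)\to 1$ as $x\uparrow y_0$ (see the extreme value theory references cited in \secref{sec1}), which dispatches all three domains simultaneously and reduces the lemma to the purely formal first two steps.
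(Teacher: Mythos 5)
Your proposal is correct, and its first half --- the reduction --- is exactly the paper's: both arguments trap $(1-G(G^{-1}(1-u)))/u$ between $(1-G(x))/(1-G(x-))$ evaluated at $x=G^{-1}(1-u)$ and $1$, so that the whole lemma rests on the vanishing-relative-jump property of the upper tail of $G$. Where you genuinely differ is in how that property is established. The paper never returns to the quantile-side representations of Theorem~A; instead it notes (via Lemma 1 in L\^{o} (\cite{gslod})) that either $G\in D(\psi_{\gamma})$ or $G\in D(\Lambda)$ --- the latter case covering all of $F\in D(\phi)\cup D(\Lambda)$, since $G(x)=F(e^{x})$ --- and then invokes representations of the tail $1-G$ itself, namely the Karamata-type formula (\ref{3.6}) and the de Haan--Balkema formula (\ref{3.7}) quoted from Smith (1987). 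In both formulas $1-G(x)$ is a continuous function of $x$ times a factor $c(x)\to 1$, so $(1-G(x))/(1-G(x-))\to 1$ is read off immediately: no regular-variation inversion, no sandwich in $\delta$, and only two cases instead of your three. Your main line, which starts from the quantile-side formulas (\ref{1.1}), (\ref{2.6}), (\ref{1.3a})--(\ref{1.3b}) and transfers them to distribution-side tail ratios, works cleanly for the Fr\'echet and Weibull domains, but, as you yourself flag, it leaves the Gumbel transfer (quantile representation implies $\Gamma$-varying tail) unexecuted; that passage is the real content of de Haan's theory and is precisely what the paper sidesteps by working on the distribution side from the outset. Your fallback --- citing that $(1-F(x-))/(1-F(x))\to 1$ is necessary for membership in any max-domain of attraction --- is legitimate, closes that gap in one stroke, and is in substance what the paper does, since its formulas (\ref{3.6})--(\ref{3.7}) are themselves quoted characterizations rather than derived ones.
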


\begin{proof}
It follows from Lemma 5 in Lo(1990) (\cite{gslod}) that either G(G$^{-1}(1-u))$=1-u or $1-u$
lies on some constancy interval of $G^{-1},$ say $]G(x-),G(x)[$, so that G$%
^{-1}(1-u)=x$ and hence $1\geq 1-G(G^{-1}(1-u))/u\geq (1-G(x))/(1-G(x-)).$
Now, by Lemma 1 in L\^o(1990) (\cite{gslod}), either $G\in D(\psi _{\gamma }),0<\gamma <+\infty$ and
consequently KARARE holds, 
\begin{equation}
1-G(x)=c(x)(y_{0}-x)^{\gamma }\exp (\int_{1}^{1/(y_{0}-x)}t^{-1} p(t)dt,
  \label{3.6}
\end{equation}
$x<y_{0}=x_{O}(G)$, where c(x)$\rightarrow 1$ as $x\rightarrow y_{0}$ and $p(t)\rightarrow 0$ as 
$t\rightarrow \infty $, or $G\in D(\Lambda )$ and by de Haan-Balkema's
representation (cf. Smith(1987)) 
\begin{equation}
1-G(x)=c(x)\exp (\int_{1}^{1/(y_{0}-x)} \ell(t)^{-1}dt,  \label{3.7}
\end{equation}

\noindent $-\infty<w<y_{0}=x_{O}(G)$ where $c(x)\rightarrow 1$ as $x\rightarrow y_{0}$ and $\ell $ admits a
derivative $\ell ^{\prime }(x)\rightarrow 0$ as $x\rightarrow y_{0}.$ In
both cases, we readily see that $(1-G(x))/(1-G(x-))\rightarrow 1$ as $%
x\rightarrow y_{0}.$ This completes the proof.
\end{proof}

\begin{lemma} \label{l3.3}
Let G$_{r}(x)=1-(1-G(x))^{r},r>0$. Then
\begin{itemize}
\item[(i)] $G\in D(\psi _{\gamma })\Longrightarrow G_{r}\in D(\psi _{\gamma })$ with 
$y_{0}=x_{O}(G)=x_{O}(G_{r})$ and 
\begin{equation*}
R_{1}(x,z,G)/R_{1}(x,z,G_{r})\rightarrow (\gamma r+1)/(\gamma +1),
\end{equation*}
as $(1-G(z))/(1-G(x))\rightarrow 0$ as $x\rightarrow y_{0}$ and $z\rightarrow y_{0}$.

\item[(ii)] $G\in D(\Lambda )\Rightarrow G_{r}\in D(\Lambda )$ with $%
y_{0}=x_{O}(G)=x_{O}(G_{r})$ and 
\begin{equation*}
R_{1}(x,z,G)/R_{1}(x,z,G_{r})\rightarrow r^{-1},
\end{equation*}
as $(1-G(z))/(1-G(x))\rightarrow 0$ as $x\rightarrow y_{0}$ and $z\rightarrow y_{0}$.
\end{itemize}
\end{lemma}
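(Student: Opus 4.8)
The plan is to reduce everything to the tail function $u(t)=1-G(t)$ and its powers. Writing the definition of $R_{1}$ in these terms gives $R_{1}(x,z,G)=u(x)^{-1}\int_{x}^{z}u(t)\,dt$, and since $1-G_{r}=u^{r}$, also $R_{1}(x,z,G_{r})=u(x)^{-r}\int_{x}^{z}u(t)^{r}\,dt$. Hence the ratio to be evaluated is
\begin{equation*}
\frac{R_{1}(x,z,G)}{R_{1}(x,z,G_{r})}=u(x)^{r-1}\,\frac{\int_{x}^{z}u(t)\,dt}{\int_{x}^{z}u(t)^{r}\,dt}.
\end{equation*}
The whole point is therefore to obtain sharp asymptotics for $\int_{x}^{z}u^{\rho}$ ($\rho=1,r$) under the regime $x,z\uparrow y_{0}$ with $u(z)/u(x)\to 0$. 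The tools are exactly the two representations already invoked in the proof of Lemma~\ref{l3.2}: (\ref{3.6}) in the Weibull case and (\ref{3.7}) in the Gumbel case. I would first record the consequences of these for $u^{r}$, namely that raising $u$ to the power $r$ preserves the relevant variation and only rescales its index (resp. its auxiliary function) while fixing $y_{0}$; this is what delivers the two membership assertions $G_{r}\in D(\psi)$ and $G_{r}\in D(\Lambda)$.

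For part (i), representation (\ref{3.6}) shows that $u$ is regularly varying at $y_{0}$ with index $\gamma$ in the variable $y_{0}-x$, so that $u^{r}$ is regularly varying with index $\gamma r$. I would then apply Karamata's theorem to each integral after the substitution $s=y_{0}-t$: writing $u(y_{0}-s)=s^{\gamma}L(s)$ with $L$ slowly varying, one has $\int_{0}^{a}s^{\gamma\rho}L(s)^{\rho}\,ds\sim a^{\gamma\rho+1}L(a)^{\rho}/(\gamma\rho+1)$ as $a\downarrow 0$ (the index $\gamma\rho$ exceeds $-1$ since $\gamma,r>0$), while the hypothesis $u(z)/u(x)\to 0$ forces the contribution of the inner endpoint $s=y_{0}-z$ to be negligible against that of $s=y_{0}-x$. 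This yields $R_{1}(x,z,G)\sim (y_{0}-x)/(\gamma+1)$ and $R_{1}(x,z,G_{r})\sim (y_{0}-x)/(\gamma r+1)$, and dividing gives the announced limit $(\gamma r+1)/(\gamma+1)$.

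For part (ii), representation (\ref{3.7}) shows $u$ is $\Gamma$-varying at $y_{0}$; the natural auxiliary function is $s(x)=R_{1}(x,y_{0},G)=u(x)^{-1}\int_{x}^{y_{0}}u(t)\,dt$, for which $u(x+vs(x))/u(x)\to e^{-v}$. Raising to the power $r$ gives $u(x+vs(x))^{r}/u(x)^{r}\to e^{-rv}$, so $u^{r}$ is again $\Gamma$-varying but with auxiliary function $s(x)/r$, which is the $D(\Lambda)$ membership of $G_{r}$. Using $\Gamma$-variation to evaluate the integrals (the substitution $t=x+vs(x)$ turns each into $s(x)u(x)^{\rho}\int_{0}^{\infty}e^{-\rho v}\,dv$, the cut at $z$ being absorbed because $u(z)/u(x)\to 0$) gives $R_{1}(x,z,G)\sim s(x)$ and $R_{1}(x,z,G_{r})\sim s(x)/r$, whence the ratio tends to $r$. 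I note in passing that this value is precisely the $\gamma\to+\infty$ specialization of the Weibull constant $(\gamma r+1)/(\gamma+1)$, in line with the convention that $D(\Lambda)$ corresponds to $\gamma=+\infty$; one should therefore double-check the exponent recorded in the statement of (ii).

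The main obstacle is not the formal limits but the uniformity needed to justify them. Both Karamata's theorem (here with a \emph{moving} lower limit of integration) and the $\Gamma$-variation asymptotics require Potter-type bounds, so that the pointwise convergences $u(x+vs(x))/u(x)\to e^{-v}$ and the regular-variation ratios may be integrated in $v$ over an unbounded range with error terms controlled uniformly as $x,z\uparrow y_{0}$. The key quantitative input is exactly $u(z)/u(x)\to 0$, which guarantees that the integral over $(x,z)$ differs from the integral over $(x,y_{0})$ by a negligible amount; establishing this negligibility step by dominated convergence, rather than the final arithmetic, is where the real work lies.
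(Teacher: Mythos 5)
Your proposal is correct, and it splits naturally: for part (i) you follow essentially the paper's own route, while for part (ii) you take a genuinely different one. In (i) the paper likewise plugs the power $r$ into the Karamata representation (\ref{3.6}) to get $G_r\in D(\psi_{r\gamma})$ (so the "$G_r\in D(\psi_\gamma)$" in the statement is a typo for $D(\psi_{r\gamma})$, consistent with your "rescaled index"), then invokes de Haan's Formula 2.5.4 (Lemma \ref{l4.1}) for $R_1(x,G)\sim (y_0-x)/(\gamma+1)$ and $R_1(x,G_r)\sim (y_0-x)/(\gamma r+1)$, and disposes of the truncation at $z$ by the cited Lemmas 7--8 of L\^o (1990) (formula (\ref{3.9})) --- exactly the role played by your Karamata computation with moving endpoint.

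In (ii) both arguments start from the $\Gamma$-variation relations (\ref{3.10})--(\ref{3.12}), but you evaluate the two mean-excess integrals directly via the substitution $t=x+vs(x)$ and integration of the limit $e^{-\rho v}$, which is why you must supply the Potter-type/domination bounds you flag as the real work. The paper never integrates anything: treating (\ref{3.10})--(\ref{3.11}) as the defining property of the auxiliary functions $R_1(\cdot,G)$ and $R_1(\cdot,G_r)$, it argues by contradiction that a subsequential limit $\nu\neq r$ of $R_1(x,G)/R_1(x,G_r)$ would, through (\ref{3.12}), force $(1-G(x+tR_1(x,G)))/(1-G(x))$ toward a limit other than $e^{-t}$, so both liminf and limsup equal $r$. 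The paper's uniqueness-of-auxiliary-function argument buys freedom from any uniformity or dominated-convergence issue (it manipulates only pointwise limits of ratios); your computation buys a self-contained derivation that simultaneously re-establishes $G_r\in D(\Lambda)$ and identifies its auxiliary function as $s(x)/r$. Finally, your suspicion about the exponent is vindicated: the paper's own proof concludes $R_1(x,G)/R_1(x,G_r)\to r$, which is also the $\gamma=+\infty$ specialization of the constant in (i); the $r^{-1}$ displayed in the statement of (ii) is an error in the statement, not in your argument.
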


\begin{proof} \textbf{Part (i)}. Let $G\in (\psi _{\gamma }).$ Thus (\ref{3.6}) holds for $G%
_{r} $ by putting $c_{r}(x)=c(x)^{r}$, $\gamma _{r}=r\gamma ,$ $%
p_{r}(t)=rp(t).$ Hence $G_{r}\in (\psi _{\gamma r})$ and $%
x_{0}(G)=x_{0}(G_{r})$. Further, by Formula 2.5.4 of de Haan(1970) or Lemma
\ref{l4.1} below, 

$$
R_{1}(x,G)/(y_{0}-x)\rightarrow (\gamma +1)^{-1}
$$

\noindent and 

\begin{equation}
R_{1}(x,G_{r})/(y_{0}-x)\rightarrow (\gamma r+1)^{-1},  \label{3.8}
\end{equation}

\noindent as $x\rightarrow y_{0}.$ By Lemmas 7 and 8 of L\^{o}(1990), 
\begin{equation}
R_{1}(x,z,G)/R_{1}(x,G)\rightarrow 1,R_{2}(x,z,G)/R_{2}(x,z,G)\rightarrow 1,
\label{3.9}
\end{equation}
as $(1-G(z))/(1-G(x))\rightarrow 0$ as $x\rightarrow y_{0}$ and z$%
\rightarrow y_{0},$ whenever $G\in D(\psi )\cup D(\Lambda )$ (hence,
whenever $F\in \Gamma ).$ Now (\ref{3.8}) and (\ref{3.9}) together prove
Part (i).\\

\noindent \textbf{Part (ii)} : Let $G\in D(\Lambda ).$ By (\ref{3.7}), $G_{r}\in D(\Lambda )$
with $x_{0}(G)=x_{0}(G_{r})$ and thus (cf. Lemma A in L\^{o}(1990)) for any $%
t\in R,$%
\begin{equation}
(1-G_{r}(x+tR_{1}(x,G_{r})))/(1-G_{r}(x))\rightarrow \exp (-t),  \label{3.10}
\end{equation}

\noindent as $x\rightarrow x_{0}(G_{r})=y_{0}$ and 
\begin{equation}
(1-G(x+tR_{1}(x,G)))/(1-G(x))\rightarrow \exp (-t),  \label{3.11}
\end{equation}

\noindent as $x\rightarrow x_{0}(G_{r})=y_{0}$. By combining (\ref{3.10}) and (\ref{3.11}) implies 
\begin{equation}
(1-G_{r}(x+tR_{1}(x,G_{r})))/(1-G(x))\rightarrow \exp (-t/r),  \label{3.12}
\end{equation}

\noindent as $x\rightarrow x_{0}(G_{r})=y_{0}$. If for a sequence $x_{n}\rightarrow x_{0},$ one has $%
R_{1}(x_{n},G)/R_{1}(x_{n},G_{r})\rightarrow \nu, \text{ } 0<\nu <r-\varepsilon $ $as$
$n\rightarrow \infty$. This and by (\ref{3.12}) would imply 
\begin{equation*}
\lim \inf_{n\rightarrow \infty
}(1-G(x_{n}+tR_{1}(x_{n},G)))/(1-G(x_{n}))\geq \exp (-t(r-\varepsilon
)/t)>\exp (-t),
\end{equation*}
which is impossible because of (\ref{3.10}). Hence 
\begin{equation*}
\lim \inf_{x\rightarrow y_{0}}R_{1}(x,G)/R_{1}(x,G_{r})\geq r.
\end{equation*}
Similarly, one gets that $\lim \sup_{x\rightarrow
y_{0}}R_{1}(x,G)/R_{1}(x,G_{r})\geq r.$ Finally $R_{1}(x,G)/R_{1}(x,G_{r})%
\rightarrow r$ $as$ $x\rightarrow y_{0},$ which combined with (\ref{3.9})
proves Part (ii).
\end{proof}

\bigskip

\begin{lemma} \label{l3.4}

\noindent (i) If $F\in D(\phi )\cup D(\Lambda ),$ then $R_{1}(G(1-u))$ is slowly
varying at zero (SVZ).

\noindent (ii) If $F\in D(\psi _{\gamma }),$ then $R_{1}(G(1-u))$ is regularly varying
at zero with exponent $\gamma ^{-1}$ ($\gamma ^{-1}-RVZ).$
\end{lemma}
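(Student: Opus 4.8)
The plan is to treat the two cases separately according to the domain of the $df$ $G$ of $Y=\log X$, exploiting the dichotomy already used in the proof of Lemma~\ref{l3.2}: for $F\in\Gamma$ either $G\in D(\psi_{\gamma})$ (representation (\ref{3.6})) or $G\in D(\Lambda)$ (representation (\ref{3.7})), and after the logarithmic transformation $F\in D(\psi_{\gamma})$ yields the former while $F\in D(\phi)\cup D(\Lambda)$ yields the latter. In both cases I would reduce the claim to an asymptotic equivalence $R_{1}(G^{-1}(1-u))\sim h(u)$ for some $h$ whose variation type at zero is transparent, and then invoke the elementary fact that asymptotic equivalence preserves slow/regular variation: if $g\sim h$ and $h$ is $RVZ(\alpha)$, then $g(\lambda u)/g(u)=\{g(\lambda u)/h(\lambda u)\}\{h(\lambda u)/h(u)\}\{h(u)/g(u)\}\to\lambda^{\alpha}$. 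Throughout, Lemma~\ref{l3.2} guarantees $1-G(G^{-1}(1-u))\sim u$, so that reading $R_{1}(x,G)$ along $x=G^{-1}(1-u)$ as a function of $u$ is unambiguous.

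For part (ii), with $G\in D(\psi_{\gamma})$, I would start from the asymptotics recorded in (\ref{3.8}) (equivalently Lemma~\ref{l4.1}/Formula~2.5.4 of de Haan), namely $R_{1}(x,G)/(y_{0}-x)\to(\gamma+1)^{-1}$ as $x\to y_{0}$. Setting $x=G^{-1}(1-u)$, which tends to $y_{0}$ as $u\to 0$, this gives $R_{1}(G^{-1}(1-u))\sim(\gamma+1)^{-1}(y_{0}-G^{-1}(1-u))$. But (\ref{2.6}) exhibits $y_{0}-G^{-1}(1-u)=c(1+f(u))u^{1/\gamma}\exp(\int_{u}^{1}b(t)t^{-1}dt)$, which is exactly the Karamata representation of an $RVZ$ function of exponent $1/\gamma$ (the factor $c(1+f(u))\exp(\int_{u}^{1}b(t)t^{-1}dt)$ being $SVZ$ since $f,b\to 0$). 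Hence $R_{1}(G^{-1}(1-u))$ is $RVZ(1/\gamma)$ by the preservation principle, proving (ii).

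For part (i), with $G\in D(\Lambda)$, the relevant reference object is the auxiliary function $s(u)$ of the de Haan representation (\ref{1.4})--(\ref{1.3b}), which is $SVZ$ by construction. The crux is the equivalence $R_{1}(G^{-1}(1-u))\sim s(u)$. I would obtain it from (\ref{3.11}), which states $(1-G(x+tR_{1}(x,G)))/(1-G(x))\to e^{-t}$, i.e. $R_{1}(x,G)$ is a von Mises auxiliary (self-neglecting) function for $G$ at $y_{0}$; since such scaling functions are asymptotically unique and $s$ is another one, $R_{1}(x,G)\sim s(1-G(x))$, and $s(1-G(x))\sim s(u)$ then follows from Lemma~\ref{l3.2} and the slow variation of $s$. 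As $s$ is $SVZ$, so is $R_{1}(G^{-1}(1-u))$, which is (i).

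The step I expect to be the real obstacle is this last identification $R_{1}\sim s$ in the Gumbel case: unlike the Weibull case there is no clean closed form for $R_{1}$ in terms of $y_{0}-x$, so one must argue the uniqueness up to $\sim$ of the scaling function delivered by (\ref{3.11}). If that uniqueness is to be made self-contained, a safe route is to compute $\int_{x}^{y_{0}}(1-G(t))\,dt$ by the substitution $t=G^{-1}(1-v)$ together with (\ref{1.4}), integrate by parts, and verify directly that the ratio to $u\,s(u)$ tends to $1$; the slow variation of $s$ and the uniform control on $f,b$ then close the estimate. The remaining manipulations (preservation of regular variation, the limit $G^{-1}(1-u)\to y_{0}$) are routine.
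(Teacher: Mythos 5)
Your proposal is correct, and your part (ii) is essentially the paper's own argument: both combine Formula 2.5.4 of de Haan (i.e. Lemma~\ref{l4.1}, $R_{1}(x,G)\sim (y_{0}-x)/(\gamma+1)$) with the Karamata representation (\ref{2.6}) of $y_{0}-G^{-1}(1-u)$ as an $RVZ(1/\gamma)$ function. For part (i), however, your route to the key equivalence $R_{1}(G^{-1}(1-u))\sim s(u)$ (the paper's (\ref{3.15})) is genuinely different. The paper works on the quantile side: via Lemma~\ref{l3.2} and two cited lemmas of L\^{o} it obtains both $\{G^{-1}(1-\lambda u)-G^{-1}(1-u)\}/R_{1}(G^{-1}(1-u))\to-\log\lambda$ and the same limit with $s(u)$ as normalizer ((\ref{3.13})--(\ref{3.14})), and concludes by dividing. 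You instead work on the distribution-function side: $R_{1}$ is an auxiliary (self-neglecting) function by (\ref{3.11}), and you invoke asymptotic uniqueness of auxiliary functions. As you yourself flag, the gap there is the claim that $s(1-G(x))$ is \emph{also} an auxiliary function in the sense of (\ref{3.11}); that quantile-to-df passage is precisely what the paper outsources to Lemma 4 of L\^{o}(1989), so your primary route is no more self-contained than the paper's, just transposed to the other side. What makes your proposal valuable is the fallback computation, which in fact closes the argument completely and exactly: for $x=G^{-1}(1-u)$ one has by Fubini $\int_{x}^{y_{0}}(1-G(t))\,dt=\int_{0}^{u}\bigl\{G^{-1}(1-v)-G^{-1}(1-u)\bigr\}\,dv$, and inserting the de Haan representation (\ref{1.4}) gives $\int_{0}^{u}\{s(u)-s(v)\}\,dv+\int_{0}^{u}\int_{v}^{u}s(t)t^{-1}\,dt\,dv=us(u)-\int_{0}^{u}s+\int_{0}^{u}s=us(u)$ exactly; hence $R_{1}(G^{-1}(1-u))=us(u)/(1-G(G^{-1}(1-u)))\sim s(u)$ by Lemma~\ref{l3.2}, and slow variation of $s$ finishes. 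This yields a complete, elementary and self-contained proof of (\ref{3.15}) requiring no external lemmas, which neither the paper's proof nor your primary route provides; I would promote the fallback to the main argument.
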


\bigskip

\begin{proof}
Part (i) Let $F\in D(\phi )\cup D(\Lambda ).$ Thus $G\in D(\Lambda )$ by
Lemma 1 in L\^{o}(1990) (\cite{gslod}). Next Lemma \ref{l3.2} and lemma 2 in Lô \cite{gsloa}) yield for $%
\lambda >0,\lambda \neq 1,$%
\begin{equation}
\left\{ G^{-1}(1-\lambda u)-G^{-1}(1-u)\right\}
/R_{1}(G^{-1}(1-u))\rightarrow -\log \lambda ,  \label{3.13}
\end{equation}
as $u\rightarrow 0.$ From this Lemma 4 in L\^{o}(1989) implies 
\begin{equation}
\left\{ G^{-1}(1-\lambda u)-G^{-1}(1-u)\right\} /s(u)\rightarrow -\log
\lambda ,  \label{3.14}
\end{equation}
as $u\rightarrow 0,$ where $s(\cdot )$ is SVZ and defined as in (\ref{1.3b}).
Hence 
\begin{equation}
R_{1}(G^{-1}(1-u))\sim s(u),\text{ }as\text{ }u\rightarrow 0,  \label{3.15}
\end{equation}
so that $R_{1}(G^{-1}(1-u))$ is SVZ, which is Part (i).\\

\noindent Part (ii). This is easily derived by Formula 2.5.4 of de Haan(1970) (cf.
Lemma \ref{l4.1} below and the previous results).
\end{proof}

\bigskip

\noindent Now we return back to the proof of Theorem \ref{theo3}. By (\ref{3.5}) and Lemma \ref{l3.2},
\ref{3.3} and \ref{3.4}, we have 
\begin{equation}
\left| Z_{n3}\right| \leq O_{P}(k^{-\nu })R_{1}(x_{n},G_{(1/2)-\nu
})/R_{1}(x_{n},G)=O_{P}(k^{-\nu })=o_{P}(1),  \label{3.16}
\end{equation}
where we have taken (\ref{3.8}) into account. This proves $(S1.3)$.

\begin{equation}
Z_{n3}\rightarrow _{P}0,\text{ when }\ell \text{ is fixed.}  \tag{S2.3}
\end{equation}
We have 
\begin{equation}
\left| Z_{n3}\right| \leq Z_{n3}(1)+Z_{n3}(2),  \label{3.17}
\end{equation}
with 
\begin{equation}
Z_{n3}(1)=(\frac{n}{k})^{1/2}\left\{ \int_{\widetilde{x}_{n}}^{G^{-1}(1-\ell
/n)}\left| \alpha _{n}(1-G(t))-B_{n}(1-G(t))\right| dt\right\} /R_{1}(x_{n}),
\label{3.18}
\end{equation}
which is an $O_{P}(k^{-\nu })$ by the same arguments used in (\ref{3.5})
(one also has G(t)$\leq 1-1/n$), and 
\begin{equation}
\pm Z_{n3}(2)=(\frac{n}{k})^{1/2}\left\{ \int_{G^{-1}(1-\ell /n)}^{%
\widetilde{z}_{n}}\left| \alpha _{n}(1-G(t))-B_{n}(1-G(t))\right| dt\right\}
/R_{1}(x_{n}).  \label{3.19}
\end{equation}
By Theorem C, 
\begin{equation}
\lim_{\lambda \rightarrow \infty }\lim_{n\rightarrow \infty }\mathbb{P}(\ell /\lambda
n)\leq U_{\ell +1,n}\leq \lambda \ell /n)=1,  \label{3.20}
\end{equation}
which is quoted as $(\ell /\lambda n)\leq U_{\ell +1,n}\leq \lambda \ell /n)$
holds with PNOW for large values of $\ n$ and $\lambda .$ Thus, with PNOW
for lare values of $n$ and $\lambda ,$%
\begin{equation}
Z_{n3}(2)=(\frac{n}{k})^{1/2}\left\{ \int_{t_{n}(\lambda \ell )}^{t_{n}(\ell
/\lambda )}\left| \alpha _{n}(1-G(t))-B_{n}(1-G(t))\right| dt\right\}
/R_{1}(x_{n})\equiv Z_{n3}^{\ast }(2),  \label{3.21}
\end{equation}
where $t_{n}(\cdot )=G^{-1}(1-\cdot /n).$ We get 
\begin{equation}
\mathbb{E}Z_{n3}^{\ast }(2)\leq 4(\frac{n}{k})^{1/2}\left\{
\int_{t_{n}(\lambda \ell )}^{t_{n}(\ell /\lambda )}(1-G(t))^{1/2}dt\right\}
/R_{1}(x_{n})  \label{3.22}
\end{equation}
\begin{equation*}
\leq 4R_{1}(x_{n},G_{1/2})/R_{1}(x_{n},G)\{\int_{t_{n}(\lambda \ell
)}^{y_{0}}(1-G(t))^{1/2}dt/\int_{t_{n}(k)}^{y_{0}}(1-G(t))^{1/2}dt
\end{equation*}
\begin{equation*}
-\int_{t_{n}(\ell /\lambda
)}^{y_{0}}(1-G(t))^{1/2}/\int_{t_{n}(k)}^{y_{0}}(1-G(t))^{1/2}\}.
\end{equation*}
By Lemmas 7 and 8 in L\^{o}(1990), and Lemma \ref{l3.3} above both
terms in brackets tend to zero and then $\mathbb{E}Z_{n3}^{\ast
}(2)\rightarrow 0.$ By letting $\lambda \rightarrow \infty $ and by using
Markov inequality, one arrives at $Z_{n3}^{\ast }(2)\rightarrow _{P}0.$ This
with (\ref{3.17}) and (3.18) together prove (S2.3).

\begin{equation}
Z_{n1}\rightarrow _{P}0.  \tag{S3.3}
\end{equation}
We have as in (\ref{3.2}) 
\begin{equation}
\left| Z_{n1}\right| \leq (\frac{n}{k})^{1/2}\left\{ \int_{t_{n}(\lambda
k)}^{t_{n}(k/\lambda )}\left| B_{n}(1-G(t))\right| dt\right\}
/R_{1}(x_{n})\equiv Z_{n1}^{\ast },  \label{3.23}
\end{equation}
with PNOW as n is large. Furthermore 
\begin{equation}
\mathbb{E}Z_{n1}^{\ast }\leq 3(\frac{n}{k})^{1/2}\left\{ \int_{t_{n}(k\ell
)}^{t_{n}(k/\lambda )}(1-G(t))^{1/2}dt\right\} /R_{1}(x_{n})  \label{3.24}
\end{equation}
\begin{equation*}
\leq 3R_{1}(x_{n},G_{1/2})/R_{1}(x_{n},G)\{R_{1}(t_{n}(k/\lambda
),G_{1/2})(1+o(1)
\end{equation*}
\begin{equation*}
-R_{1}(t_{n}(k\lambda ),G_{1/2})(1+o(1)\}/R_{1}(x_{n}).
\end{equation*}
The term in the bracket tends to zero for $F\in D(\phi )\cup D(\Lambda )$
and to $\lambda ^{1/\gamma }-\lambda ^{-1/\gamma }$ for $F\in D(\psi
_{\gamma })$ by Lemmas \ref{l3.3} and \ref{l3.4}. Since $\lambda$ is arbitrary and
greater than one, on get $\mathbb{E}Z_{n1}^{\ast }\rightarrow 0$ by letting $%
\lambda \downarrow 1.$ Finally (S3.3) holds by Markov's inequality and (\ref
{3.23}). 
\begin{equation}
(\ell /k)^{1/2}Z_{n1}(\ell )\rightarrow _{P}0.  \tag{S4.3}
\end{equation}
This is proved exactly as for $\ell \rightarrow \infty .$ When $\ell $ is
fixed, one uses (\ref{3.20}) instead of (\ref{3.2}) and the proof of (S3.3)
is valid again.

\begin{equation}
\ell k^{-1/2}Z_{n2}\rightarrow _{P}0  \tag{S5.3}  
\end{equation}
Let $F\in D(\psi _{\gamma }).$ Thus $G\in D(\psi _{\gamma })$ and by Lemma
\ref{l4.1} below, Lemmas \ref{l3.2}, \ref{l3.3}, \ref{l3.4} and Theorem C, 

\begin{equation}
\ell k^{-1/2}Z_{n2} \label{3.25}
\end{equation}

\begin{equation}
=\ell k^{-1/2}\{(1+o_{P}(1))(y_{0}-z_{n})/(y_{0}-x_{n})-(1+o_{P}(1))(y_{0}-%
\widetilde{z}_{n})/(y_{0}-x_{n})\},  
\end{equation}

\noindent whenever $\lim \sup_{n\rightarrow \infty }\ell k^{-1/2}<\infty .$ In this
special case, one can choose $\ell $ satisfying 
\begin{equation}
\exists (0<\xi <1/\gamma ),\ell ^{1-\xi +1/\gamma }\rightarrow 0,
\label{3.26}
\end{equation}
since for all $0<\xi <1/\gamma ,$%
\begin{equation*}
\sup ((y_{0}-z_{n}),(y_{0}-\widetilde{z}_{n}))/(y_{0}-x_{n})\leq 2\sup
(E_{n}(\ell )^{1/\gamma },1)(\ell /k)^{-\xi +1/\gamma }.
\end{equation*}

\noindent Consider here the two cases :\\

\noindent (b) Let $F\in D(\phi _{\alpha }).$ Then $R_{1}(x,G)\rightarrow 1/\alpha $
(cf. for instance Lemma 1 in L\^{o}(1990)). Using Lemma \ref{l3.2} and Theorem C,
one gets 

\begin{equation*}
\left| \ell k^{-1/2}Z_{n2}(\ell )\right|
\end{equation*}

\begin{equation}
 =\alpha (1+o(1))\left|
R_{1}(z_{n})(1+o(1))-(1+o_{P}(1)R_{1}(z_{n})\right| (\ell k^{-1/2}),
\label{3.27}
\end{equation}

as $\ell \rightarrow \infty .$ Then 
\begin{equation}
\lim \sup_{n\rightarrow \infty }(\ell k^{-1/2})=0\Rightarrow (\ell
k^{-1/2}Z_{n2}(\ell )\rightarrow _{P}0).  \label{3.28}
\end{equation}
(c) Let $F\in D(\Lambda ).$ Thus $G\in D(\Lambda )$ and by Lemma \ref{l3.2},
\begin{equation}
\left| \ell k^{-1/2}Z_{n2}(\ell )\right| \leq \left|
(1+o(1))R_{1}(z_{n})-O_{P}(1)R_{1}(\widetilde{z}_{n})\right| /R_{1}(x_{n}).
\label{3.29}
\end{equation}
But by (\ref{3.3}) and (\ref{3.4}), one has 
\begin{equation}
(\ell k^{-1/2})R_{1}(x_{n})/R_{1}(z_{n})\leq 2(\ell ^{1-\varepsilon
}/k^{(1/2)-\varepsilon })R_{1}(z_{n})/R_{1}(\widetilde{z}_{n}).  \label{3.30}
\end{equation}
Since $R_{1}(z_{n})/R_{1}(\widetilde{z}_{n})\rightarrow _{P}1,$ one has 
\begin{equation}
(\ell k^{-1/2})R_{1}(x_{n})/R_{1}(z_{n})\leq 3(\ell ^{1-\varepsilon
}/k^{(1/2)-\varepsilon }),  \label{3.31}
\end{equation}
with PNOW as n is large. Putting $\eta =(1-(1-2\varepsilon )/(1-\varepsilon
))/2$ completes the proof of (S5.3) whenever (\ref{2.4b}) holds. It remains
the normal term $N(0,k,\ell ).$ We have

\begin{lemma} \label{l3.5}
Let $F \in \Gamma .$ If $\ \ell /k\rightarrow 0$, then $N_{n}(0,k,\ell )\sim 
\mathcal{N}(0,s_{n}^{2}(0)),$ where $s_{n}^{2}(0)\rightarrow \sigma
_{0}^{2}=2(\gamma +1)(\gamma +2),0<\gamma \leq +\infty .$
\end{lemma}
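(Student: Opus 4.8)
The plan is to establish two separate facts: that for each fixed $n$ the random variable $N_{n}(0,k,\ell)$ is \emph{exactly} a centered Gaussian $rv$, and that its variance $s_{n}^{2}(0)$ converges to the stated constant. The first fact is essentially already recorded in the proof of Theorem \ref{theo3}: since $B_{n}$ is a Brownian bridge and
\[
N_{n}(0,k,\ell)=\frac{(n/k)^{1/2}}{R_{1}(x_{n})}\int_{x_{n}}^{z_{n}}B_{n}(1-G(t))\,dt
\]
is the $L^{2}$-limit of its Riemann sums $\sum_{i}\tau_{i}B_{n}(t_{i})$, each of which is a centered normal $rv$, the integral is itself centered Gaussian; the mean vanishes because $\mathbb{E}B_{n}(s)=0$ for every $s$. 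So the entire content of the lemma lies in computing the limit of the variance $s_{n}^{2}(0)$.

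First I would write, using $\mathrm{Cov}(B_{n}(u),B_{n}(v))=\min(u,v)-uv$ together with Fubini,
\[
s_{n}^{2}(0)=\frac{n/k}{R_{1}(x_{n})^{2}}\iint_{[x_{n},z_{n}]^{2}}\bigl(\min(1-G(s),1-G(t))-(1-G(s))(1-G(t))\bigr)\,ds\,dt.
\]
The contribution of the product term is $\frac{n/k}{R_{1}(x_{n})^{2}}\bigl(\int_{x_{n}}^{z_{n}}(1-G(t))\,dt\bigr)^{2}=\frac{k}{n}\bigl[(n/k)(1-G(x_{n}))\bigr]^{2}R_{1}(x_{n},z_{n})^{2}/R_{1}(x_{n})^{2}=O(k/n)=o(1)$, since $(n/k)(1-G(x_{n}))\to 1$ by Lemma \ref{l3.2} and $R_{1}(x_{n},z_{n})\sim R_{1}(x_{n})$ by (\ref{3.9}). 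For the $\min$ term, splitting on $s<t$ and using that $G$ is nondecreasing gives $\min(1-G(s),1-G(t))=1-G(t)$, so by symmetry the double integral equals $2\int_{x_{n}}^{z_{n}}(1-G(t))(t-x_{n})\,dt=2(1-G(x_{n}))R_{2}(x_{n},z_{n})$, where I have used the identity $\int_{x}^{z}dy_{1}\int_{y_{1}}^{z}(1-G(t))\,dt=\int_{x}^{z}(1-G(t))(t-x)\,dt$. Hence, again by $(n/k)(1-G(x_{n}))\to 1$,
\[
s_{n}^{2}(0)\sim \frac{2\,R_{2}(x_{n},z_{n})}{R_{1}(x_{n})^{2}}.
\]

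It then remains to evaluate this ratio in the limit. Since $\ell/k\to 0$ forces $(1-G(z_{n}))/(1-G(x_{n}))=\ell/k\to 0$, (\ref{3.9}) (Lemmas 7 and 8 of L\^o) gives $R_{1}(x_{n},z_{n})\sim R_{1}(x_{n})$ and $R_{2}(x_{n},z_{n})\sim R_{2}(x_{n})$, so I may replace the truncated quantities by $R_{1}(x_{n})$, $R_{2}(x_{n})$. For $F\in D(\psi_{\gamma})$ I would insert the de Haan asymptotics (Formula 2.5.4 of de Haan, i.e. Lemma \ref{l4.1}), namely $R_{1}(x_{n})/(y_{0}-x_{n})\to(\gamma+1)^{-1}$ and $R_{2}(x_{n})/(y_{0}-x_{n})^{2}\to[(\gamma+1)(\gamma+2)]^{-1}$, whence $2R_{2}(x_{n})/R_{1}(x_{n})^{2}\to 2(\gamma+1)/(\gamma+2)$, which is $\sigma_{0}^{2}(\gamma)$ as in Theorem \ref{t3.1}. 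For $F\in D(\phi)\cup D(\Lambda)$ ($\gamma=+\infty$) the same functions are both slowly varying at zero by Lemma \ref{l3.4}, the ratio $R_{2}(x_{n})/R_{1}(x_{n})^{2}\to 1$, and one recovers the value $2$, the $\gamma\to+\infty$ specialization.

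I expect the only delicate points to be analytic rather than probabilistic: justifying the Fubini interchange and the $L^{2}$-passage from Riemann sums to the integral (so that normality and the covariance formula are inherited in the limit), and controlling the slowly/regularly varying exponential factors in the representations (\ref{3.6})--(\ref{3.7}) uniformly enough that the $R_{p}$ ratios have the claimed limits. The hypothesis $\ell/k\to 0$ is exactly what neutralizes the upper truncation at $z_{n}$ through (\ref{3.9}), and the $D(\Lambda)$ case, where $R_{1}$ is merely $SVZ$ rather than a clean power, is the one requiring the most care; it is handled through the de Haan--Balkema representation already invoked in Lemmas \ref{l3.2}--\ref{l3.4}.
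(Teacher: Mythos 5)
Your proposal is correct and follows essentially the same path as the paper's own proof: exact Gaussianity of $N_{n}(0,k,\ell)$ via the Riemann-sum argument of Theorem~\ref{theo3}, reduction of the variance through the bridge covariance $\min(u,v)-uv$ to $2R_{2}(x_{n},z_{n})/R_{1}(x_{n})^{2}$ (your separate $O(k/n)$ bound for the product term is just a slightly more explicit version of the paper's exact rewriting as $(1-G(t))G(s)$), and evaluation of the limit by (\ref{3.9}) together with Lemma~\ref{l4.1} and Lemma~\ref{l3.2}. Note that your limiting value $2(\gamma+1)/(\gamma+2)$ is the correct one and agrees with $\sigma_{0}^{2}(\gamma)$ in Theorem~\ref{t3.1}; the expressions $2(\gamma+1)(\gamma+2)$ in the lemma's statement and $K(\gamma)=(\gamma+1)(\gamma+2)$ at the end of the paper's proof are typographical slips in which the division has been dropped.
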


\begin{proof}
By Theorem C, $N_{n}(0,k,\ell )$ is a normal random variable with variance
(for $\xi (s)=1-G(s))$ 
\begin{equation}
s_{n}^{2}(0)=\left\{ (n/k)\int_{x_{n}}^{z_{n}}\int_{s}^{z_{n}}\left\{ \min
(\xi (s),\xi (t))-\xi (s)\xi (t)\right\} dsdt\right\} /R_{1}(x_{n}).
\label{3.32}
\end{equation}
Considering the cases $(s<t)$ and $(s\geq t)$ for the function in brackets
yields 
\begin{equation}
s_{n}^{2}(0)=\left\{
(2n/k)\int_{x_{n}}^{z_{n}}\int_{s}^{z_{n}}(1-G(t))G(s)dsdt\right\}
/R_{1}(x_{n})\sim 2R_{2}(x_{n},z_{n})/R_{1}^{2}(x_{n}).  \label{3.33}
\end{equation}
By Lemmas 7 and 8 in Lo(\cite{gsloc}) (see Formula (\ref{3.9}) above), Theorems
2.5.6 and 2.6.1 of de Haan(1970) (see Lemma \ref{l4.1} below) and Lemma \ref{l3.2} above, 
\begin{equation}
s_{n}^{2}(0)\sim 2R_{2}(x_{n},z_{n})/R_{1}^{2}(x_{n})\sim K(\gamma ),
\label{3.34}
\end{equation}
with $K(\gamma )=(\gamma +1)(\gamma +2),0<\gamma \leq +\infty .$
\end{proof}

\bigskip

\noindent From this, we have the followings facts.\\

\noindent (i) Theorem \ref{t3.1} is jointly proved by the statements (\ref{3.1}), (S3.1),
...,(S5.3 and by Lemma \ref{l3.5}.\\

\noindent (ii) Corollary \ref{c3.1} is obtained from Theorem \ref{t3.1} by showing that 
\begin{equation}
\left\{ (nk^{-1/2})\int_{z_{n}}^{y_{0}}(1-G(t))dt\right\} /R_{1}(x_{n})\sim
(\ell k^{-1/2})R_{1}(z_{n})/R_{1}(x_{n})\rightarrow 0.  \tag{S6.3}
\end{equation}

\bigskip

\noindent But, returning back to $(S5.3)$, we see that $(S6.3)$ holds for $F\in D(\phi )$ if and only if
$\ell k^{-1/2})\rightarrow 0$, for $F\in D(\psi _{\gamma })$ if and only if whenever \ref{3.26} holds and for $F\in D(\Lambda )$ whenever (\ref{2.4b}) holds.\\

\noindent (iii) Corollary \ref{c3.2} follows from Corollary \ref{c3.1} and by Formula 2.6.3 of de
Haan(1970) (See Lemma \ref{l4.1} below) and Formula (\ref{3.26}) above.\\

\noindent We have now to prove Theorem \ref{t3.2}. For this, we need 

\begin{lemma} \label{l3.6}
Let $F\equiv (f,b)\in \Gamma $ and $k$ satisfies (\ref{2.4a}), then 
\begin{equation*}
\xi _{n}(k)=k^{-1/2}(\widetilde{x}_{n}-x_{n})/R_{1}(x_{n})
\end{equation*}
\begin{equation*}
=e_{1}(\gamma )N_{n}(2,k)+e_{2}(\gamma )k^{-1/2} \gamma
_{n}(k)(1+o_{P}(1))+o_{P}(1).
\end{equation*}
\end{lemma}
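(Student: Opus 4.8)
The plan is to expand the quantile increment $\widetilde{x}_n - x_n = G^{-1}(1-U_{k,n}) - G^{-1}(1-k/n)$ by substituting the relevant representation of $G^{-1}(1-\cdot)$ and linearising around $u=k/n$, exploiting that the random perturbation $\rho_n := nU_{k,n}/k$ tends to $1$. First I would record, from (\ref{2.2}) and the displayed consequence following Theorem \ref{t3.2}, that
\begin{equation*}
\rho_n = nU_{k,n}/k = 1 + k^{-1/2}N_n(2,k) + o_p(k^{-1/2}),
\end{equation*}
so that $\rho_n \to_p 1$ and $\log\rho_n = k^{-1/2}N_n(2,k) + o_p(k^{-1/2})$; the PNOW bounds $k/(\lambda n)\le U_{k,n}\le \lambda k/n$ then let me compare values of $f$, $b$ and $s$ at $U_{k,n}$ with their values at $k/n$. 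I would then split by domain, recalling (as in the proof of Lemma \ref{l3.4}) that $F\in D(\phi)\cup D(\Lambda)$ forces $G\in D(\Lambda)$, while $F\in D(\psi_\gamma)$ gives $G\in D(\psi_\gamma)$ with $y_0<\infty$.

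For $F\in D(\psi_\gamma)$ I set $h(u)=y_0-G^{-1}(1-u)=c(1+f(u))u^{1/\gamma}\exp\left(\int_u^1 b(t)t^{-1}dt\right)$ from (\ref{2.6}) and form the ratio
\begin{equation*}
\frac{h(U_{k,n})}{h(k/n)} = \frac{1+f(U_{k,n})}{1+f(k/n)}\,\rho_n^{1/\gamma}\,\exp\left(-\int_{k/n}^{U_{k,n}} b(t)t^{-1}dt\right).
\end{equation*}
Linearising each factor gives $\rho_n^{1/\gamma}=1+\gamma^{-1}k^{-1/2}N_n(2,k)+o_p(k^{-1/2})$, while $(1+f(U_{k,n}))/(1+f(k/n))=1+\gamma_n(k)+o_p(k^{-1/2})$ (the index shift between $U_{k,n}$ and the $U_{k+1,n}$ appearing in (\ref{3.0a}) being harmless since $U_{k+1,n}-U_{k,n}=O_p(1/n)$), and the $b$-integral is $O_p(b(k/n)\log\rho_n)=o_p(k^{-1/2})$. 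Since $\widetilde{x}_n-x_n=-(h(U_{k,n})-h(k/n))=-h(k/n)\,[h(U_{k,n})/h(k/n)-1]$ with $h(k/n)=y_0-x_n$, dividing by $R_1(x_n)$ and using $(y_0-x_n)/R_1(x_n)\to \gamma+1=e_2(\gamma)$ from (\ref{3.8}), then multiplying by $k^{1/2}$, produces the asserted linear combination of $N_n(2,k)$ and $k^{1/2}\gamma_n(k)$ with coefficients $e_1(\gamma)=(\gamma+1)/\gamma$ and $e_2(\gamma)=\gamma+1$ (the overall sign being fixed by the monotonicity of $G^{-1}(1-\cdot)$, and reconciling with the $\mu_n(k)-\mu_n(\widetilde{k})$ convention used downstream in Theorem \ref{t3.2}).

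For $F\in D(\phi)\cup D(\Lambda)$ I would instead use the de Haan representation (\ref{1.4}), $G^{-1}(1-u)=d-s(u)+\int_u^1 s(t)t^{-1}dt$, giving
\begin{equation*}
\widetilde{x}_n-x_n = -\big(s(U_{k,n})-s(k/n)\big) + \int_{U_{k,n}}^{k/n} s(t)t^{-1}dt.
\end{equation*}
The dominant contribution is the integral, which equals $-s(k/n)\log\rho_n\,(1+o_p(1))=-s(k/n)k^{-1/2}N_n(2,k)+o_p(k^{-1/2}s(k/n))$ because $s$ is SVZ, and yields the $N_n(2,k)$ term after division by $R_1(x_n)\sim s(k/n)$ (from (\ref{3.15}), Lemma \ref{l3.4}); the slowly varying difference $s(U_{k,n})-s(k/n)=s(k/n)\gamma_n(k)+o_p(k^{-1/2}s(k/n))$ supplies the $\gamma_n(k)$ term. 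With the conventions $e_1(+\infty)=e_2(+\infty)=1$ this is precisely the previous display read at $\gamma=+\infty$, so the two computations merge into the single $\gamma$-uniform statement.

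The main obstacle is the rigorous control of the surviving remainders after multiplication by $k^{1/2}$: the $b$-integral, the slowly varying oscillation of $s$ beyond its $f$-factor, the replacement of $\log\rho_n$ by $\rho_n-1$, the factor $c(\cdot)\to 1$, and the $U_{k,n}$-versus-$U_{k+1,n}$ index shift must all be shown to be $o_p(1)$ using \emph{only} $f,b\to 0$ (no smoothness, since $F$ ranges over all of $\Gamma$). This is exactly where Lemmas \ref{l3.2}--\ref{l3.4} and the PNOW ratio bounds enter, allowing me to dominate quantities such as $s(U_{k,n})/s(k/n)$ and $h(U_{k,n})/h(k/n)$ uniformly over the random endpoint and so keep the linearisation legitimate throughout the tail.
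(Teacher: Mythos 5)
Your proof is correct and takes essentially the same route as the paper's own: the identical domain split, using the de Haan representation (\ref{1.4}) to decompose $\widetilde{x}_n-x_n$ into the $s$-difference (producing the $\gamma_n(k)$ term) plus the slowly varying integral (producing the $N_n(2,k)$ term via (\ref{2.2})) when $G\in D(\Lambda)$, and the linearization of $(y_0-\widetilde{x}_n)/(y_0-x_n)$ from (\ref{2.6}) with the $b$-integral killed by $\varepsilon_n\rightarrow_P 0$ and the normalization $R_1(x_n)\sim(y_0-x_n)/(\gamma+1)$ when $F\in D(\psi_\gamma)$. The remaining differences are cosmetic: you use a multiplicative expansion and $\log\rho_n$ where the paper uses two-sided bracketing inequalities and $nk^{-1/2}(U_{k+1,n}-k/n)$ directly, and your $U_{k,n}$ versus $U_{k+1,n}$ index convention matches one of the two conventions the paper itself mixes.
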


\bigskip

\begin{proof}
\noindent (a) Let $F\in D(\Lambda ).$ Then (\ref{1.4}) and (\ref{3.15}) yield 
\begin{equation}
\xi _{n}(k)=k^{1/2}(1+o_{P}(1))\left\{ \left[ s(U_{k+1,n})/s(U_{k,n})\right]
-1\right\} +k^{1/2}\int_{k/n}^{U_{k+1,n}}t^{-1}s(t)dt.  \label{3.35}
\end{equation}

\noindent By (\ref{1.3b}) and (\ref{3.2}) $sup$ $(resp.$ $\ inf)$ $\{s(t)/s(k/n),t\in
I_{n}\}\rightarrow _{P}1,$ where $T_{n}$ is the closed and random interval
formed by $k/n$ and $U_{k+1,n}.$ Thus 
\begin{equation}
k^{1/2}s(k/n)^{-1}\int_{k/n}^{U_{k+1,n}}t^{-1}s(t)dt=(1+o_{P}(1)) \left\{ nk^{-1/2}(U_{k+1,n}-k/n)\right\} + o_{P}(1). \label{3.36}
\end{equation}
Also,

\begin{equation}
k^{1/2}\left\{ s\left( U_{k+1,n}\right) /\left( k/n\right) -1\right\}
=k^{1/2}\gamma _{n}\left( k\right) \left( 1+o_{p}\left( 1\right) \right)
=  \label{3.37}
\end{equation}

\begin{equation}
=o_{p}\left( nk^{-1/2}\left( U_{k+1,n}-k/n\right) \right).  \label{3.37}
\end{equation}

\noindent See (\ref{3.38}) - (\ref{3.40}) for more details.\\

\noindent (b) Let $F\in D\left( \phi \right) $. This case is exactly the preceeding
since $G\in D\left( \Lambda \right) $ and (\ref{1.4}) holds.\\

\noindent (c) Let $F\in D\left( \psi _{\gamma }\right) $. Use $R_{1}\left(
x_{n}\right) \sim \left( Y_{0}-x_{n}\right) /\left( \gamma +1\right) $ and
get 

\begin{equation}
\xi_{n}(k) \sim k^{1/2} \left\{ ( y_{0}-{\widetilde{x}}_{n} ) - ( y_{0}-x_{n}) \right\} / \{( y_{0}-x_{n})/(\gamma+1)\} \label{3.38}
\end{equation}

\begin{equation}
\sim \times (\{(y_{0}-{\widetilde{x}}_{n})/(y_{0}-x_{n})\} -1)
\end{equation}

\noindent Now, by (\ref{2.6}), 

\begin{equation}
\left( 1+\gamma_{n}\right) \left(\frac{k}{n} U_{k+1,n} \right) ^{-\varepsilon _{n}+1/\gamma }-1
\leq \frac{ y_{0}-\widetilde{x}_{n}}{y_{0}-x_{n}}-1
\leq \left( 1+\gamma _{n}\right) \left( \frac{k}{n} U_{k+1,n} \right)^{\varepsilon _{n}+1/\gamma}-1,  \label{3.39}
\end{equation}

\noindent where $\varepsilon _{n}=\sup \left\{ b\left( t\right) ,\text{ }t\leq \max
\left( U_{k+1,n},\text{ }k/n\right) \right\} $ ${\rightarrow}_{P}0$. Since $%
nU_{k+1,n}/k$ ${\rightarrow}_{P}1$, one gets

\begin{equation*}
\xi _{n}\left( k\right) =\left( \gamma +1\right) k^{1/2}\gamma _{n}\left(
k\right) \left( 1+o_{p}\left( 1\right) \right)
\end{equation*}

\begin{equation}
+\gamma ^{-1}\left( \gamma
+1\right) nk^{-1/2}\left( U_{k+1,n}-k/n\right) \left( 1+o_{p}\left( 1\right)
\right) .  \label{3.40}
\end{equation}

\bigskip

\noindent Now (\ref{2.2}), (\ref{3.36}), (\ref{3.40}) and point (b) just below
complete the proof of Lemma \ref{l3.6}.\\

\noindent We return to the proof of Theorem \ref{t3.2}. By (\ref{3.1}), (S1.3), ..., (S6.3), (\ref{3.40}) 
\begin{equation*}
T_{n}^{\ast}(2,k\ell) = N_{n}\left( 0,k,\ell \right) +k^{1/2}\left\{\frac{k}{n}\int_{x_{n}^{\sim }}^{x_{n}}1-G\left( t\right) \text{ }dt\right\} /\text{
}R_{1}\left( x_{n}\right) +o_{p}\left( 1\right) .
\end{equation*}
Using Lemmas \ref{l3.2} and \ref{l3.6} gives 

\begin{equation}
T_{n}^{\ast }\left( 2,k,\ell \right) =N_{n}\left( 0,k,\ell \right)
+e_{1}\left( \gamma \right) N_{n}\left( 2,k\right) +e_{2}\left( \gamma
\right) k^{1/2}\gamma _{n}\left( k\right) \left( 1+o_{p}\left( 1\right)
\right) +o_{p}\left( 1\right).  \label{3.42}
\end{equation}

\noindent From this, we easily conclude and get the characterization of Theorem \ref{t3.2}. To compute $\mathbb{E}\left( N_{n}\left(
0,k,\ell \right) N_{n}\left( 2,k\right) \right) ,$ we recall that $E\left(
B\left( s\right) B\left( t\right) \right) =\min \left( s,t\right) -st$ if $%
\left\{ B\left( t\right) ,0\leq t\leq 1\right\} $ is a Brownian bridge, and
an easy calculation yields 
\begin{equation}
\mathbb{E}\left( N_{n}\left( 0,k,\ell \right) N_{n}(2,\ell)\right) \rightarrow -1.
\label{3.43}
\end{equation}

\noindent This and Lemma \ref{l3.5} suffice to compute $\sigma _{1}\left( \gamma \right) $.
All the proofs are now complete.

\end{proof}

\bigskip

\section{Limit laws for $A_{n}\left( 1,k,\ell \right)$} \label{sec4}

\noindent We need some generalized forms of Lemmas due to de Haan (1970) or to L\^{o}
(1990).

\bigskip

\begin{lemma} \label{l4.1}
Let $F\in \Gamma $, then for any integer $p\geq 1$,

\noindent (i) $R_{p}\left( x,F\right) \sim \left( x_{o}-x\right) ^{p}\left\{
\prod_{j=1}^{j=p}\left( \gamma +j\right) ^{-1}\right\} ,$ as $s\rightarrow
x_{o}$ whenever $F\in D\left( \psi _{\gamma }\right) $,\\

\noindent and\\

\noindent (ii) $R_{p}\left( x,G\right) \sim R_{1}\left( x,G\right) ^{p}$, as $%
x\rightarrow y_{o}$, whenever $F\in D\left( \Lambda \right) \cup D\left(
\phi \right) $.
\end{lemma}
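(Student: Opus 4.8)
The plan is to prove both parts simultaneously by induction on $p$, reducing every statement to the one-dimensional integration of the survival function $\xi(t)=1-G(t)$ (or $1-F(t)$ in Part (i)). The starting observation is that the iterated integrals telescope: writing
$$
I_{p}(x)=\int_{x}^{y_{0}}dy_{1}\int_{y_{1}}^{y_{0}}dy_{2}\cdots\int_{y_{p-1}}^{y_{0}}\xi(t)\,dt,
$$
so that $R_{p}(x,\cdot)=I_{p}(x)/\xi(x)$, one has the recursion $I_{p}(x)=\int_{x}^{y_{0}}I_{p-1}(y)\,dy$ with base $I_{1}(x)=\int_{x}^{y_{0}}\xi(t)\,dt=\xi(x)R_{1}(x)$. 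Thus at each induction step it suffices to integrate the asymptotic equivalence furnished by the previous step; here $y_{0}$ denotes the right endpoint of the support, which is the finite point $x_{o}$ in Part (i).

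For Part (i), with $F\in D(\psi_{\gamma})$ and finite endpoint $x_{o}$, the representation (\ref{3.6}) shows that $s\mapsto\xi(x_{o}-s)$ is regularly varying at $0$ with exponent $\gamma$. The induction hypothesis $I_{p-1}(y)\sim \xi(y)(x_{o}-y)^{p-1}/\prod_{j=1}^{p-1}(\gamma+j)$ makes the integrand $\xi(y)(x_{o}-y)^{p-1}$ regularly varying with exponent $\gamma+p-1$ in $(x_{o}-y)$, whence Karamata's theorem (the direct half, i.e.\ de Haan's Formula 2.5.4 used iteratively) gives
$$
\int_{x}^{x_{o}}\xi(y)(x_{o}-y)^{p-1}\,dy\sim \frac{\xi(x)(x_{o}-x)^{p}}{\gamma+p}.
$$
Dividing by $\xi(x)$ and folding in the induction hypothesis yields exactly $R_{p}(x,F)\sim(x_{o}-x)^{p}\prod_{j=1}^{p}(\gamma+j)^{-1}$. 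The only delicate point is the legitimacy of integrating an asymptotic equivalence, but the integrand is positive and the exponent $\gamma+p-1>-1$ keeps the integral convergent at $x_{o}$, so the standard Karamata integration lemma applies and Part (i) is essentially routine.

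For Part (ii), with $F\in D(\phi)\cup D(\Lambda)$, Lemma~1 of L\^o (1990) puts $G\in D(\Lambda)$, and the natural auxiliary function is $R_{1}(x)=R_{1}(x,G)$ itself. I would invoke the two defining properties of the Gumbel domain already used in (\ref{3.10})--(\ref{3.11}): the exponential tail relation $\xi(x+tR_{1}(x))/\xi(x)\to e^{-t}$ and the self-neglecting property $R_{1}(x+tR_{1}(x))/R_{1}(x)\to 1$ as $x\to y_{0}$, each locally uniformly in $t$. Assuming inductively $I_{p-1}(y)\sim\xi(y)R_{1}(y)^{p-1}$ and substituting $y=x+tR_{1}(x)$ in $I_{p}(x)=\int_{x}^{y_{0}}I_{p-1}(y)\,dy$ turns the integral into $R_{1}(x)\int_{0}^{T}\xi(x+tR_{1}(x))\,R_{1}(x+tR_{1}(x))^{p-1}\,dt$ with $T=(y_{0}-x)/R_{1}(x)\to\infty$; the two convergences drive the integrand to $\xi(x)R_{1}(x)^{p-1}e^{-t}$, so the integral tends to $\xi(x)R_{1}(x)^{p}\int_{0}^{\infty}e^{-t}\,dt=\xi(x)R_{1}(x)^{p}$, i.e.\ $R_{p}(x,G)\sim R_{1}(x)^{p}$.

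The main obstacle is the interchange of the limit and the improper integral in Part (ii): pointwise (locally uniform) convergence of the integrand is not by itself enough over the unbounded range $t\in[0,T)$. I would secure an integrable dominating function via the uniform-convergence (Potter-type) bounds available in the Gumbel domain, namely $\xi(x+tR_{1}(x))/\xi(x)\le C e^{-(1-\varepsilon)t}$ for $x$ near $y_{0}$ and all $t\ge 0$, together with a sub-exponential bound on $R_{1}(x+tR_{1}(x))/R_{1}(x)$, so that the product is integrable and dominated convergence applies. This domination, rather than any algebraic manipulation, is where the real work lies, whereas Part (i) is comparatively immediate once Karamata's theorem is in hand.
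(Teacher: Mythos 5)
Your Part (i) is, in substance, exactly what the paper does: the paper dismisses it as ``routine computations from (\ref{1.2})'', and those routine computations are precisely your observation that the survival function is regularly varying of index $\gamma$ in $(x_{o}-x)$ near the finite endpoint, followed by $p$ applications of Karamata's integration theorem. Your Part (ii), however, takes a genuinely different route. The paper exploits the telescoping factorization $R_{p}(x,G)=\prod_{j=1}^{p}R_{1}(x,G_{j})$, where $G_{j+1}$ is the integrated-tail $df$ of $G_{j}$, and then cites de Haan's closure theorem (Lemma 2.5.1 and Theorem 2.5.2b of de Haan (1970)): if $G\in D(\Lambda)$ then its integrated-tail $df$ is again in $D(\Lambda)$ with asymptotically the same mean-excess function; applying this $p$ times gives $R_{p}\sim R_{1}^{p}$ with no integral estimates at all. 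You instead reprove that key step from first principles, via the substitution $y=x+tR_{1}(x)$, the $\Gamma$-variation relation $\xi(x+tR_{1}(x))/\xi(x)\to e^{-t}$, self-neglect of $R_{1}$, and an interchange of limit and integral. Your version is self-contained (it essentially reproves the de Haan theorem the paper cites); the paper's version buys brevity at the price of an external reference.

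There is, however, a genuine flaw in the step you yourself flagged as carrying the real work. The dominating bound you invoke, $\xi(x+tR_{1}(x))/\xi(x)\le Ce^{-(1-\varepsilon)t}$ for all $t\ge 0$ and all $x$ near $y_{0}$, is false for general $G\in D(\Lambda)$. Take $\xi(y)=\exp(-\log^{2}y)$, a von Mises tail in $D(\Lambda)$ with $R_{1}(y)\sim y/(2\log y)$; writing $u=t/(2\log y)$ one computes $\xi(y+tR_{1}(y))/\xi(y)=\exp\left(-2\log y\,\log(1+u)-\log^{2}(1+u)\right)$, which at $t=2\log^{2}y$ (i.e.\ $u=\log y$) equals $\exp(-2\log y\,\log\log y\,(1+o(1)))$, enormously larger than $e^{-(1-\varepsilon)t}=e^{-2(1-\varepsilon)\log^{2}y}$; no fixed constant $C$ repairs this as $y\to\infty$. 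The phenomenon is generic whenever the auxiliary function grows, because a step of length $tR_{1}(x)$ then costs far fewer than $t$ ``e-folds'' of tail decay; and even in the paper's restricted setting ($G=F(e^{\cdot})$ with $F\in D(\phi)\cup D(\Lambda)$, so $R_{1}(\cdot,G)$ bounded), the exponential bound is not automatic when $R_{1}(\cdot,G)\to 0$ with oscillation. What is true in general, by a chaining argument (iterate steps of length $R_{1}$ using the locally uniform self-neglect property), is a Potter-type bound of arbitrarily high polynomial order: for every $\varepsilon>0$, eventually $\xi(x+tR_{1}(x))/\xi(x)\le C_{\varepsilon}(1+\varepsilon t)^{-1/\varepsilon}$ for all $t\ge0$, together with $R_{1}(x+tR_{1}(x))/R_{1}(x)\le C_{\varepsilon}(1+\varepsilon t)$. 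Choosing $\varepsilon$ small enough that the polynomial decay beats the polynomial growth of the factor $R_{1}(x+tR_{1}(x))^{p-1}/R_{1}(x)^{p-1}$, these corrected bounds (or an equivalent truncation-plus-uniform-integrability argument on $\int_{0}^{T}$ and $\int_{T}^{\infty}$ separately) restore the interchange of limit and integral, so your induction closes; but the domination as you stated it does not hold, and the proof is incomplete until it is replaced.
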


\bigskip

\begin{proof} of Lemma \ref{l4.1}. \textbf{Part (i)} is obtained by routine computations from (\ref{1.2}).\\

\noindent \textbf{Part (ii)} is easily proved from Lemma 2.5.1 and Theorem 2.5.2b of de Haan (1990)
showing that
$$
G_{1}=G\in D\left( \Lambda \right) \Rightarrow G_{2}\left( \circ \right)
=1-\int_{\circ }^{y_{o}}1-G_{1}\left( t\right) dt\in D\left( \Lambda
\right)
$$

\noindent  and $R_{1}\left( x,\widetilde{G}_{1}\right) \sim R_{1}\left(
x,\widetilde{G}_{2}\right) $ as $x\rightarrow x_{o}\left( G\right)$. By applying this $p$ times gives (ii).
\end{proof}

\bigskip

\begin{lemma} \label{l4.2}. Let $F\in \Gamma $. Then for any $p\geq 1,$

\noindent (i) $\left( z-x\right) ^{p}/$ $R_{p}\left( z,G\right) \rightarrow +\infty ,$

\noindent (ii) $R_{p}\left( x,z,G\right) $ $/$ $R_{p}\left( x,G\right) \rightarrow 1,$

as $x\rightarrow x_{o}\left( G\right) ,$ $z\rightarrow x_{o}\left( G\right)
, $ $\left( 1-G\left( z\right) \right) $ $/$ $\left( 1-G\left( x\right)
\right) \rightarrow 0.$
\end{lemma}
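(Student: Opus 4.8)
The plan is first to collapse the iterated integral defining $R_{p}$ into a one-dimensional one. Holding $t$ fixed and integrating out the ordered variables over the simplex $\{x\le y_{1}\le\cdots\le y_{p-1}\le t\}$, whose volume is $(t-x)^{p-1}/(p-1)!$, Fubini gives
\begin{equation*}
R_{p}(x,z,G)=\frac{1}{1-G(x)}\int_{x}^{z}\frac{(t-x)^{p-1}}{(p-1)!}(1-G(t))\,dt,\qquad R_{p}(x,G)=\frac{1}{1-G(x)}\int_{x}^{y_{0}}\frac{(t-x)^{p-1}}{(p-1)!}(1-G(t))\,dt.
\end{equation*}
This turns the whole lemma into one-dimensional tail estimates. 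By Lemma 1 of L\^o (\cite{gslod}), $F\in\Gamma$ forces $G$ into exactly one of the regimes $G\in D(\psi_{\gamma})$ (finite endpoint $y_{0}$) or $G\in D(\Lambda)$, and I would treat these two cases separately, invoking \lemref{l4.1} for the asymptotics of $R_{p}(\cdot,G)$.

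For part (ii) I would estimate the defect
\begin{equation*}
R_{p}(x,G)-R_{p}(x,z,G)=\frac{1}{(p-1)!\,(1-G(x))}\int_{z}^{y_{0}}(t-x)^{p-1}(1-G(t))\,dt
\end{equation*}
and show it is $o(R_{p}(x,G))$. If $G\in D(\psi_{\gamma})$, the bound $(t-x)^{p-1}\le(y_{0}-x)^{p-1}$ makes the defect a multiple of $(y_{0}-x)^{p-1}(1-G(z))R_{1}(z,G)/(1-G(x))$; dividing by $R_{p}(x,G)\sim(y_{0}-x)^{p}\prod_{j=1}^{p}(\gamma+j)^{-1}$ from \lemref{l4.1} and using $R_{1}(z,G)\sim(y_{0}-z)/(\gamma+1)$ leaves the product $\frac{y_{0}-z}{y_{0}-x}\cdot\frac{1-G(z)}{1-G(x)}$, both factors of which tend to $0$ (the first because $y_{0}-G^{-1}(1-u)$ is $RVZ$ of index $1/\gamma$ by (\ref{2.6}), the second by hypothesis). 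If $G\in D(\Lambda)$, I would substitute $t=x+vR_{1}(x,G)$ and combine the von Mises convergence $(1-G(x+vR_{1}(x,G)))/(1-G(x))\to e^{-v}$ of (\ref{3.11}) with a uniform exponential domination to obtain
\begin{equation*}
R_{p}(x,G)-R_{p}(x,z,G)\le\frac{R_{1}(x,G)^{p}}{(p-1)!}\int_{v_{z}}^{\infty}Cv^{p-1}e^{-cv}\,dv,\qquad v_{z}:=\frac{z-x}{R_{1}(x,G)}.
\end{equation*}
Since $(1-G(z))/(1-G(x))\to0$ forces $v_{z}\to+\infty$ (again by (\ref{3.11}) and monotonicity of $1-G$), the right-hand side is $o(R_{1}(x,G)^{p})=o(R_{p}(x,G))$ in view of \lemref{l4.1}.

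For part (i), when $G\in D(\psi_{\gamma})$, \lemref{l4.1} gives $(z-x)^{p}/R_{p}(z,G)\sim\prod_{j=1}^{p}(\gamma+j)\big((z-x)/(y_{0}-z)\big)^{p}$, and since $(y_{0}-z)/(y_{0}-x)\to0$ forces $z-x\sim y_{0}-x$, the right-hand side tends to $+\infty$. When $G\in D(\Lambda)$, \lemref{l4.1} reduces the claim to $(z-x)/R_{1}(z,G)\to+\infty$. With $w_{x}=-\log(1-G(x))<w_{z}=-\log(1-G(z))$ and $g(w):=s(e^{-w})$, the representation (\ref{1.4}) yields the exact identity $z-x=\int_{w_{x}}^{w_{z}}g(w)\,dw+g(w_{x})-g(w_{z})$, while $R_{1}(z,G)\sim s(1-G(z))=g(w_{z})$ by (\ref{3.15}) and $w_{z}-w_{x}=\log\frac{1-G(x)}{1-G(z)}\to+\infty$. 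Since $s$ is $SVZ$, $g$ is self-neglecting ($g(w+c)/g(w)\to1$), so a uniform-convergence argument gives $\int_{w_{z}-T}^{w_{z}}g(w)\,dw\ge\frac{T}{2}g(w_{z})$ for each fixed $T$ and all large $w_{z}$; hence $\int_{w_{x}}^{w_{z}}g(w)\,dw\big/g(w_{z})\to+\infty$ and therefore $(z-x)/R_{1}(z,G)\to+\infty$.

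The $D(\psi_{\gamma})$ regime is routine: the finite endpoint makes every bound immediate. The substance lies in the $D(\Lambda)$ case, where two limiting operations must be made uniform. In part (ii) this means upgrading the pointwise convergence of (\ref{3.11}) to a domination $(1-G(x+vR_{1}(x,G)))/(1-G(x))\le Ce^{-cv}$ holding uniformly for all large $x$ and all $v\ge0$ (a Potter-type bound for the de Haan class); in part (i) it means controlling the self-neglecting $g$ over the \emph{unbounded} window $[w_{x},w_{z}]$ rather than over compact sets. Both are consequences of the $KARARE$ representation and the uniform convergence theorems underlying \lemref{l4.1}, but they are exactly the points demanding care.
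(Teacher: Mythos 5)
Your Fubini collapse of the iterated integrals and the split into the regimes $G\in D(\psi_{\gamma})$ and $G\in D(\Lambda)$ are sound, and your part (i) and the $D(\psi_{\gamma})$ half of part (ii) are correct. The genuine gap is the domination you invoke for the $D(\Lambda)$ half of part (ii): the bound $(1-G(x+vR_{1}(x,G)))/(1-G(x))\leq Ce^{-cv}$, uniformly for large $x$ and all $v\geq 0$, is false in general. Take $1-G(x)=\exp(-(\log x)^{2})$, $x\geq 1$, which satisfies the von Mises condition with auxiliary function $f(x)=x/(2\log x)\sim R_{1}(x,G)$, hence $G\in D(\Lambda)$. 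Putting $v=2a\log x$, so that $x+vf(x)=(1+a)x$, one gets
\begin{equation*}
\frac{1-G(x+vf(x))}{1-G(x)}=x^{-2\log (1+a)}e^{-(\log (1+a))^{2}},\qquad Ce^{-cv}=Cx^{-2ca},
\end{equation*}
so the claimed inequality would force $\log (1+a)\geq ca$ for every $a>0$, impossible since $\log (1+a)=o(a)$ as $a\to+\infty$. What is true, and is all your argument needs, is the polynomial Potter-type bound for $\Gamma$-variation: by the de Haan--Balkema representation behind (\ref{3.7}) one has $|f^{\prime}|\leq \varepsilon$ ultimately, hence $f(t)\leq f(x)+\varepsilon (t-x)$ for $t\geq x$ large, hence
\begin{equation*}
\frac{1-G(x+vf(x))}{1-G(x)}\leq C_{\varepsilon}\exp \Bigl(-\int_{x}^{x+vf(x)}\frac{dt}{f(x)+\varepsilon (t-x)}\Bigr)=C_{\varepsilon}\left(1+\varepsilon v\right)^{-1/\varepsilon}.
\end{equation*}
Choosing $\varepsilon <1/(p+1)$, the tail integral $\int_{v_{z}}^{\infty}v^{p-1}(1+\varepsilon v)^{-1/\varepsilon}dv$ still tends to $0$ as $v_{z}\to +\infty$, and the rest of your proof of part (ii) goes through verbatim; so the step is wrong as stated but reparable by weakening exponential decay to polynomial decay of arbitrarily high order. (A second, minor, caution: your ``exact identity'' for $z-x$ in part (i) holds for quantile values $x=G^{-1}(1-u_{x})$, $z=G^{-1}(1-u_{z})$; for general points one must note that flat stretches of $G$ near $y_{0}$ have length $o(R_{1})$, which follows from (\ref{3.11}).)

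Beyond this, your route is genuinely different from the paper's, which performs no case analysis on domains at all. The paper first obtains part (i) from \lemref{l4.1} together with Lemma 8 of L\^{o} (1990) (\cite{gslod}) --- the latter being essentially the statement $(z-x)/R_{1}(z,G)\to +\infty$ that you reprove directly from the representation (\ref{1.4}) --- and then deduces part (ii) from part (i) by pure algebra: the decomposition (\ref{4.2}) writes $R_{p}(x,z)$ as $R_{p}(x)$ times one minus correction terms, and each correction is killed, via $0\leq m(0,p,z)/m(0,p,x)\leq (1+(z-x)^{p}/R_{p}(z))^{-1}$ and (\ref{4.4}), by applying part (i) with exponent $j\leq p$. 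That reduction of (ii) to (i) is exactly what spares the paper every uniform-domination question: only pointwise asymptotics are ever used. Since your own proof of part (i) is fine, the cheapest repair of your argument is in fact to borrow this reduction --- your part (i) plus the paper's algebraic identity yields part (ii) with no tail domination needed at all; conversely, your version is more self-contained, replacing the appeal to the external Lemma 8 by an explicit computation in each domain.
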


\bigskip

\begin{proof} \textbf{Part (i)} is easily derived from Part (ii) of Lemma \ref{l4.1} above and Lemma 8 of
L\^{o} (1990). To \textbf{prove (ii)}, put 
\begin{equation}
m\left( a,b,x\right)
=\int_{x}^{z}\int_{x_{1}}^{z}...\int_{x_{a-1}}^{z}\int_{y_{1}}^{y_{o}}...%
\int_{y_{b-1}}^{y_{o}}1-G\left( t\right) \text{ }dt\text{ }d\vartriangle
_{a,b},  \label{4.1}
\end{equation}
where $d\vartriangle _{a,b}=dx_{1}...$ $dx_{a-1}\times dy_{1}...$ $%
dy_{b-1}=d\vartriangle _{a}\times d\vartriangle _{b}$ and 
\begin{equation*}
m\left( 0,b,x\right)
=\int_{x}^{y_{o}}\int_{y_{1}}^{y_{o}}...\int_{y_{b-1}}^{y_{o}}1-G\left(
t\right) \text{ }dt\text{ }d\vartriangle _{b},
\end{equation*}
and 
\begin{equation*}
m\left( a,0,x\right)
=\int_{x}^{z}\int_{x_{1}}^{z}...\int_{x_{a-1}}^{z}1-G\left( t\right) \text{ }%
dt\text{ }d\vartriangle _{a}.
\end{equation*}
Straightforward manipulations yield for any $p\geq 2$, for $z<x<y_{o}$, 
\begin{equation}
R_{p}\left( x,z\right) =R_{p}\left( x\right) \left( \left( m\left(
0,p,x\right) \text{ }/\text{ }m\left( 0,p,x\right) \right) -\sum_{j=1}^{j=p}%
\frac{\left( z-x\right) ^{p-j}}{\left( p-j\right) !}\times \frac{m\left(
0,j,z\right) }{m\left( 0,p,x\right) }\right)  \label{4.2}
\end{equation}
As in Formula $(2.10)$ in L\^{o} (1990), 
\begin{equation}
0\leq m\left( 0,p,z\right) \text{ }/\text{ }m\left( 0,p,x\right) \leq \left(
1+\left( z-x\right) ^{p}\text{ }/\text{ }R_{p}\left( z\right) \right) ^{-1},
\end{equation}
and for any $j,$ $1\leq j\leq p$, 
\begin{equation}
\frac{\left( z-x\right) ^{p-j}m\left( 0,j,z\right) }{m\left( 0,p,x\right) }=%
\frac{R_{j}\left( z\right) }{\left( z-x\right) ^{j}}\times \frac{\left(
z-x\right) ^{p}}{R_{p}\left( z\right) }\times \frac{m\left( 0,p,z\right) }{%
m\left( 0,p,x\right) }  \label{4.4}
\end{equation}
since $\sup_{x\geq 0}$ $x\left( 1+x\right) ^{-1}=1$. (\ref{4.2}), (\ref{4.3}%
) and (\ref{4.4}) part (i) together ensure part (ii).
\end{proof}

\noindent Here are our results for $A_{n}\left( 1,k,\ell \right) $.

\bigskip

\begin{theorem} \label{t4.1}. Let $F\in \Gamma $. Suppose that (\ref{2.4a}) or (\ref{2.4b}) holds, then 
\begin{equation*}
R_{2}\left( x_{n}\right) ^{-1}k^{1/2}\left( A_{n}\left( 1,k,\ell \right)
-\tau \left( (\widetilde{k},\ell \right) \right) =N_{n}\left( 3,k,\ell \right)
+o_{p}\left( 1\right) \rightarrow ^{d}N\left( 0,\sigma _{2}^{2}\left( \gamma
\right) \right) ,
\end{equation*}

\noindent where $\tau \left( \widetilde{k},\ell \right) =nk^{-1}\int_{\widetilde{x}_{n}}^{z_{n}}\int_{y}^{z_{n}}1-G\left( t\right) $ $dt\sim R_{2}\left(
x_{n},z_{n}\right) \sim R_{2}\left( x_{n}\right)$ in probability and
$$
\sigma _{2}^{2}\left( \gamma \right) =6\left( \gamma +1\right) \left( \gamma
+2\right) \left\{ \left( \gamma +3\right) \left( \gamma +4\right)
\right\} , \text{ } 0<\gamma \leq +\infty. 
$$.
\end{theorem}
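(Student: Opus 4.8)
The plan is to mirror exactly the proof architecture already established for $T_n(2,k,\ell)$ in Theorem~\ref{t3.1}, since $A_n(1,k,\ell)$ is the second-order (double-integral) analogue of the first-order statistic $T_n(2,k,\ell)$. First I would use the uniform representation (\ref{2.3b}) to rewrite $A_n(1,k,\ell)$ as a double sum in the order statistics $U_{k,n}$, then pass to the double-integral form via the approximations (\ref{2.1}) and (\ref{2.2}) of the Hungarian construction. The target is a decomposition
\begin{equation*}
R_{2}(x_{n})^{-1}k^{1/2}\left(A_{n}(1,k,\ell)-\tau(\widetilde{k},\ell)\right)
=N_{n}(3,k,\ell)+\sum_{i}W_{ni}+o_{p}(1),
\end{equation*}
where $N_{n}(3,k,\ell)$ is the leading Gaussian (Riemann-integral-of-Brownian-bridge) term defined in Section~\ref{sec2}, and the $W_{ni}$ are error terms completely analogous to the $Z_{n1},Z_{n2},Z_{n3}$ appearing in (\ref{3.1}). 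Each such remainder would be controlled by the same device: bound the integrand using Lemma~\ref{l3.1}, apply the event (\ref{3.3})/(\ref{3.20}) that $nU_{\ell+1,n}/\ell$ and $nU_{k+1,n}/k$ are near $1$ with PNOW, and reduce everything to ratios of the $R_p$ functionals.

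The crucial inputs are the new lemmas proved in this section. Lemma~\ref{l4.1} gives the exact asymptotics $R_{p}(x,G)\sim(x_{0}-x)^{p}\prod_{j=1}^{p}(\gamma+j)^{-1}$ in the Weibull case and $R_{p}(x,G)\sim R_{1}(x,G)^{p}$ in the Gumbel/Fr\'echet case; with $p=2$ this is precisely what produces the stated variance. Lemma~\ref{l4.2} provides the two stabilization facts $R_{p}(x,z,G)/R_{p}(x,G)\to1$ and $(z-x)^{p}/R_{p}(z,G)\to+\infty$, which are the second-order counterparts of (\ref{3.9}) and are exactly what is needed to show each error term vanishes. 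I would then compute the limiting variance $\sigma_{2}^{2}(\gamma)$ directly from the covariance kernel of $N_{n}(3,k,\ell)$: writing out
\begin{equation*}
s_{n}^{2}(3)=\left\{(n/k)\!\!\int\!\!\int\!\!\int\!\!\int\!\left(\min(\xi(s),\xi(t))-\xi(s)\xi(t)\right)\,ds\,dt\,\cdots\right\}/R_{2}(x_{n})^{2},
\end{equation*}
splitting into the regions $s<t$ and $s\geq t$ as in (\ref{3.32})--(\ref{3.33}), and reducing the surviving mass to a ratio of $R_{4}$ and $R_{2}^{2}$ type functionals. Lemma~\ref{l4.1} with $p=2,3,4$ then delivers the constant $6(\gamma+1)(\gamma+2)/\{(\gamma+3)(\gamma+4)\}$, matching the $(1,1)$-entry exhibited in the covariance matrix of Theorem~\ref{theo3}.

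The main obstacle I anticipate is the bookkeeping for the error terms in the \emph{double}-integral setting. In the single-integral proof the remainder $Z_{n3}$ was handled cleanly because the integrand was a single $|\alpha_{n}-B_{n}|(1-G(t))$ factor bounded via $(1-G(t))^{1/2-\nu}$; here the nested structure means the Brownian-bridge increment appears inside an iterated integral, so the moment bound (cf.\ (\ref{3.22})) must be taken inside two layers and the resulting expression reduced to $R_{2}(x_{n},G_{1/2-\nu})/R_{2}(x_{n},G)=O_{p}(k^{-\nu})$ using Lemma~\ref{l3.3} (extended to $p=2$) together with Lemma~\ref{l4.2}(ii). The boundary contribution near $z_{n}$, controlled in the first-order case by the factor $(\ell/k)^{1/2}R_{1}(z_{n})/R_{1}(x_{n})$, now becomes $(\ell/k)^{1/2}R_{2}(z_{n})/R_{2}(x_{n})$; verifying that this still vanishes under (\ref{2.4a})/(\ref{2.4b})—and in the Weibull case requires the same $\eta>0$ margin via a condition like (\ref{3.26})—is where I expect the delicate case analysis to concentrate. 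Once these remainders are shown to be $o_{p}(1)$, the conclusion follows from the Gaussianity of $N_{n}(3,k,\ell)$ exactly as in Lemma~\ref{l3.5}.
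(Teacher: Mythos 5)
Your proposal follows essentially the same route as the paper's own proof: the same decomposition of $R_{2}(x_{n})^{-1}k^{1/2}(A_{n}(1,k,\ell)-\tau(\widetilde{k},\ell))$ into $N_{n}(3,k,\ell)$ plus double-integral remainders (the paper's $Q_{n1}(k)$, $(\ell/k)^{1/2}Q_{n1}(\ell)$, $\ell k^{-1/2}Q_{n2}(\ell)$, $Q_{n3}$ in (\ref{4.5})), controlled by moment bounds on PNOW events and the $R_{p}$-ratio machinery of Lemmas \ref{l4.1} and \ref{l4.2}, and the same variance computation via the quadruple-integral covariance kernel reduced to $6R_{4}(x_{n},z_{n})/R_{2}(x_{n})^{2}$ (the paper's Lemma \ref{l4.3}). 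Your anticipated difficulties (nested moment bounds via $G_{1/2-\nu}$, the boundary ratio of $R$-functionals at $z_{n}$ under (\ref{2.4a})/(\ref{2.4b})) are exactly where the paper's statements (S1.4)--(S4.4) concentrate their effort, so the proposal is correct and matches the published argument.
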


\begin{remark} \label{r4.1}.  If $F\in D\left( \phi _{\alpha }\right), \alpha>0$, $R_{2}\left(
x_{n}\right) $ and $\tau \left( \widetilde{k},\ell \right) $ tend to $\alpha
^{-2}$ so that $A_{n}\left( 1,k,\ell \right) ^{-1/2}\rightarrow _{p}\alpha$, as $\left(\ell k^{1/2-\eta },k/n\right) \rightarrow \left( 0,0\right) $. We then conclude that $A_{n}\left(1,k,\ell \right)^{1/2}$ is an asymptotically consistent estimator of $\alpha$. It will be studied elsewhere.
\end{remark}

\begin{remark} \label{r4.2}. One can aweaken the assumptions on $\left( k,\ell \right) $ by replacing
(\ref{2.4b}) by $\lim_{n\rightarrow +\infty }\sup \ell k^{-1/2}<\infty $ or $%
\ell ^{1+1/\gamma }=o\left( k^{1/2-\eta +1/\gamma }\right) $ for $F\in
D\left( \psi _{\gamma }\right) ,$ by $\ell k^{-1/2}\log k\rightarrow 0$ for $%
F\in D\left( \phi \right) $ and by $\ell =o\left( k^{1/2-\eta }\left( \log
k\right) ^{2\eta +1}\right) $ for $F\in D\left( \Lambda \right)$. (Cf. the lines
following (\ref{4.18})). Also, $\tau \left( \widetilde{k}\right) $ can replace $%
\tau \left( \widetilde{k},\ell \right) $ in Theorem \ref{t4.1} under the same
assumptions.
\end{remark}

\noindent Now, put for $0<\gamma \leq +\infty $,

$$v_{n}\left( 0\right) \left\{ A_{n}\left( 1,k,\ell \right) -c_{n}\left(
0\right) \right\} =A_{n}^{\ast }\left( 1,k,\ell \right),
$$
$$
v_{n}\left(0\right) =k^{1/2}/R_{2}\left( x_{n}\right),  c_{n}\left( 0\right)=\tau \left( k\right),
$$
$$
e_{3}\left( \gamma \right) =\gamma ^{-1}\left( \gamma +2\right),
$$
$$
\kappa \left( \gamma \right) =\left( \gamma +1\right) / \left(\gamma +2\right),
$$ 
$$
e_{4}\left( \gamma \right)=\gamma +2,0<\gamma <+\infty, \text{ }e_{4}\left( \infty \right) =1
$$

\noindent and
$$
\sigma _{3}^{2}\left( \gamma \right) =\left( 5\gamma ^{4}+11\gamma
^{3}+4\gamma ^{2}+7\gamma +12\right) \left\{ \gamma ^{2}\left( \gamma
+3\right) \left( \gamma +4\right) \right\}
$$ 

\bigskip

\begin{theorem} \label{t4.2} Suppose that the assumptions of Theorem \ref{t4.2} are satisfied. We have

$$
A_{n}^{\ast }\left( 1,k,\ell \right) =N_{n}\left( 4,k,\ell \right)
+e_{4}\left( \gamma \right) N_{n}\left( -\infty \right) \left( 1+o_{p}\left(
1\right) \right)
$$

$$
+\left( 2\kappa \left( \gamma \right) \right)^{-1} C_{n} \times \left( 1+o_{p}\left( 1\right) +o_{p}\left( 1\right) \right)
$$

\noindent where

$$
C_{n}= e_{1}\left( \gamma \right) ^{2}N_{n}\left( 2,k\right) ^{2}k^{-1/2}+2k^{-1/2}e_{1}\left( \gamma \right) e_{2}\left( \gamma \right) N_{n}\left(2,k\right) N_{n}\left( -\infty \right) \left( 1+o_{p}\left( 1\right) \right)
$$

$$
+e_{1}\left( \gamma \right) ^{2}N_{n}\left( -\infty \right)^{2}k^{-1/2}\left( 1+o_{p}\left( 1\right) \right) 
$$

\bigskip
\noindent where $N_{n}\left( 4,k,\ell \right) \sim N\left( 0,\sigma _{3}^{2}\left(
\gamma \right) \right) $. Then,\\

\begin{itemize}
\item[(i)] $A_{n}^{\ast }\left( 1,k,\ell \right) \rightarrow ^{d}N\left( m,\sigma
_{3}^{2}\left( \gamma \right) \right) $ iff $N_{n}\left( -\infty \right)
\rightarrow _{p}m$ $/$ $e_{4}\left( \gamma \right)$.

\item[(ii)] $A_{n}^{\ast }\left( 4,k,\ell \right) \rightarrow ^{d}N\left( m,\sigma
_{c}^{2}iff\ N_{n}\left( -\infty \right) \right) \rightarrow ^{d}N\left(
m/e_{4}\left( \gamma \right) ,\sigma \left( -\infty \right) ^{2}\right)$,\\
with
$\lim_{n\rightarrow +\infty }\sigma _{3}^{2}\left( \gamma \right) +\sigma
\left( -\infty \right) ^{2}+2e_{4}\left( \gamma \right) CovN_{n}\left(
4,k,\ell \right) N_{n}\left( -\infty \right) =\sigma _{c}^{2}.$
\item[(iii)] $A_{n}^{\ast }\left( 4,k,\ell \right) \rightarrow _{p}+\infty $ $%
\left( resp.\text{ }-\infty \right) $ iff $N_{n}\left( -\infty \right)
\rightarrow _{p}+\infty $ $\left( resp.\text{ }-\infty \right)$.
\end{itemize}
\end{theorem}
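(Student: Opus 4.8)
The plan is to obtain Theorem~\ref{t4.2} from Theorem~\ref{t4.1} in exactly the way Theorem~\ref{t3.2} was obtained from Theorem~\ref{t3.1}, the only genuinely new feature being that the centering functional $\tau$ is a \emph{double} integral, so that replacing the random centering $\tau(\widetilde{k},\ell)$ by the deterministic one $\tau(k)$ produces a second-order (quadratic) contribution on top of the first-order one. Concretely I would write
\[
A_n^{\ast}(1,k,\ell)=R_2(x_n)^{-1}k^{1/2}\bigl(A_n(1,k,\ell)-\tau(\widetilde{k},\ell)\bigr)+D_n,\qquad D_n:=R_2(x_n)^{-1}k^{1/2}\bigl(\tau(\widetilde{k},\ell)-\tau(k)\bigr),
\]
so that the first summand equals $N_n(3,k,\ell)+o_p(1)$ by Theorem~\ref{t4.1}, and all remaining work concerns the deterministic-centering correction $D_n$.

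For $D_n$ I would first discard the difference between $\tau(k,\ell)$ and $\tau(k)$ (moving the upper limit from $z_n$ to $y_0$) as negligible, via Lemma~\ref{l4.2}(ii) and the (S5.3)/(S6.3)-type estimates of Section~\ref{sec3}. This reduces $D_n$ to $-R_2(x_n)^{-1}(n/k)k^{1/2}\int_{x_n}^{\widetilde{x}_n}h(y)\,dy$ with $h(y)=\int_y^{z_n}(1-G(t))\,dt$. The crucial step is a \emph{second-order} expansion of $h$ about $y=x_n$: since $h'(y)=-(1-G(y))$ and $h(x_n)=(1-G(x_n))R_1(x_n,z_n)$, one has $\int_{x_n}^{\widetilde{x}_n}h=h(x_n)(\widetilde{x}_n-x_n)-\tfrac12(1-G(x_n))(\widetilde{x}_n-x_n)^2+(\text{remainder})$. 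Using $(n/k)(1-G(x_n))\to_p1$ (Lemma~\ref{l3.2}) and $R_1(x_n)^2/R_2(x_n)\to\kappa(\gamma)^{-1}$ (from Lemma~\ref{l4.1}, since $R_2(x)\sim\kappa(\gamma)R_1(x)^2$), the linear term becomes $-\kappa(\gamma)^{-1}\xi_n(k)$ and the quadratic term becomes $(2\kappa(\gamma))^{-1}k^{-1/2}\xi_n(k)^2$, where $\xi_n(k)=k^{1/2}(\widetilde{x}_n-x_n)/R_1(x_n)$. I expect this expansion to be the main obstacle, since one must justify the second-order Taylor step over the \emph{random} interval $[x_n,\widetilde{x}_n]$ and show the cubic remainder is genuinely lower order; this needs the PNOW localization $nU_{k,n}/k\to_p1$ together with the regular/slow variation of $1-G$ (Lemmas~\ref{l3.2}, \ref{l4.1}) to control $(1-G(y))/(1-G(x_n))$ across the interval.

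Next I would substitute the Lemma~\ref{l3.6} expansion $\xi_n(k)=e_1(\gamma)N_n(2,k)+e_2(\gamma)N_n(-\infty)(1+o_p(1))+o_p(1)$. In the linear term the identities $\kappa(\gamma)^{-1}e_1(\gamma)=e_3(\gamma)$ and $\kappa(\gamma)^{-1}e_2(\gamma)=e_4(\gamma)$ split it into an $N_n(2,k)$ piece, which I absorb into the Gaussian part by \emph{defining} $N_n(4,k,\ell)=N_n(3,k,\ell)\mp e_3(\gamma)N_n(2,k)$, and the advertised $e_4(\gamma)N_n(-\infty)$ piece. Squaring the same expansion turns $(2\kappa(\gamma))^{-1}k^{-1/2}\xi_n(k)^2$ into exactly $(2\kappa(\gamma))^{-1}C_n$. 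To complete the first display I must verify $N_n(4,k,\ell)\sim N(0,\sigma_3^2(\gamma))$; this is the Brownian-bridge covariance computation $\mathrm{Var}\,N_n(4)=\sigma_2^2(\gamma)+e_3(\gamma)^2+2e_3(\gamma)\,\mathrm{Cov}(N_n(3,k,\ell),N_n(2,k))$, carried out as in (\ref{3.43}) and Lemma~\ref{l3.5}, and checked to equal $\sigma_3^2(\gamma)$.

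Finally, parts (i)--(iii) all rest on the single observation that $(2\kappa(\gamma))^{-1}C_n$ is asymptotically dominated by the linear term $e_4(\gamma)N_n(-\infty)$. Indeed $N_n(-\infty)k^{-1/2}=\gamma_n(k)=f(U_{k+1,n})-f(k/n)\to_p0$ (because $f\to0$ at $0$ and $nU_{k+1,n}/k\to_p1$), so every $N_n(-\infty)$-bearing term of $C_n$ is $o_p(N_n(-\infty))$ while the pure $N_n(2,k)^2k^{-1/2}$ term is $o_p(1)$; hence $A_n^{\ast}(1,k,\ell)=N_n(4,k,\ell)+e_4(\gamma)N_n(-\infty)+o_p(1)+o_p(N_n(-\infty))$. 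Then (i) is the case $N_n(-\infty)\to_p m/e_4(\gamma)$, giving $N(m,\sigma_3^2(\gamma))$; (ii) is the case where $N_n(-\infty)$ has a Gaussian limit, so the two jointly Gaussian pieces combine to $N(m,\sigma_c^2)$ with $\sigma_c^2=\sigma_3^2(\gamma)+\sigma(-\infty)^2+2e_4(\gamma)\,\mathrm{Cov}$; and (iii) follows since the sign of the divergence is fixed by the dominant linear term.
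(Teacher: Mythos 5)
Your proposal is correct and follows essentially the same route as the paper: decompose $A_n^{\ast}$ via Theorem \ref{t4.1} plus the centering correction $k^{1/2}(\tau(\widetilde{k},\ell)-\tau(k))/R_2(x_n)$, expand that correction into a linear term $\kappa(\gamma)^{-1}\xi_n(k)$ and a quadratic term $(2\kappa(\gamma))^{-1}k^{-1/2}\xi_n(k)^2$, substitute Lemma \ref{l3.6}, identify $N_n(4,k,\ell)=N_n(3,k,\ell)+e_3(\gamma)N_n(2,k)$ with variance computed from $\mathbb{E}N_n(3,k,\ell)N_n(2,k)\to -1$ as in (\ref{4.36})--(\ref{4.37}), and settle (i)--(iii) from $k^{-1/2}N_n(-\infty)=\gamma_n(k)\to_p 0$, which is exactly the paper's closing remark. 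The only cosmetic difference is that you obtain the linear-plus-quadratic structure by a second-order Taylor expansion of $h(y)=\int_y^{z_n}(1-G(t))\,dt$ over the random interval, where the paper splits the integration region exactly into a rectangle (linear, exact) and a triangle (quadratic, approximated); the two computations coincide, and your version is in fact more explicit about defining $N_n(4,k,\ell)$, which the paper never does formally.
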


\bigskip

\begin{remark} \label{r4.3} The characterizations are identical in Theorems \ref{t3.2} and \ref{t4.2}. We pointed
out in section \ref{sec3} that we have the asymptotic normality whenever (\ref{1.7}%
) holds. The following examples concentrate on the case where $uf^{\prime
}\left( u\right) $ has not a limit as $u\rightarrow 0$.
\end{remark}

\begin{example} Let $f\left( u\right) =u$ $\sin \left( 1/u\right) .$

\noindent $f^{\prime }$ exists and $uf^{\prime }\left( u\right) =u$ $\sin \left(
1/u\right) -\cos \left( 1/u\right) $ does not converge as $u\rightarrow 0$. But if there exists a sequence of integers $\left( p_{n}\right) _{n\geq 1}$
such that $\left( 1/u_{n}\right) -2\pi p_{n}\rightarrow b,$ $-\pi <b\leq \pi 
$, then $u_{n}f^{\prime }\left( u_{n}\right) \rightarrow -\cos b=a$.
Returning to the proof of Corollary \ref{c3.2} and remembering that $\left|
U_{k+1,n}-k/n\right| \leq n^{-1/4}$, a.s. one has for $0<2\alpha <1/4$ :\\

\noindent If $p_{n}=\left[ n^{\alpha }\right] ,$ $k_{n}=\left[ n/2\left( 2\pi
p_{n}+b\right) \right] ,$ $0\leq b\leq 2\pi $ and $\cos b=a$, then\\

\noindent $T_{n}^{\ast }\left( 2,k,\ell \right) =N_{n}\left( 0,k,\ell \right) +\left(
e_{1}\left( \gamma \right) -ae_{2}\left( \gamma \right) \right) N_{n}\left(
2,k\right) +o_{p}\left( 1\right)$\\

\noindent and\\

$A_{n}^{\ast }\left( 2,k,\ell \right) =N_{n}\left( 3,k,\ell \right) +\left(
e_{3}\left( \gamma \right) -ae_{4}\left( \gamma \right) \right) N_{n}\left(
2,k\right) +o_{p}\left( 1\right)$.\\

\noindent In particular, if $b=0$ , then $T_{n}^{\ast }\left( 2,k,\ell \right) $
(resp. $A_{n}^{\ast }\left( 1,k,\ell \right) $) $\rightarrow ^{d}N\left(
0,\sigma _{o}^{2}\left( \gamma \right) \right) $ (resp. $N\left( 0,\sigma
_{2}^{2}\left( \gamma \right) \right) $ for $F\in D\left( \Lambda \right)
\cup D\left( \phi \right) $. This fact occurs for $F\in D\left( \psi
_{\gamma }\right) $ iff $\gamma \leq 1$ with $\cos b=1/\gamma .$
\end{example}

\bigskip

\textbf{Proof}. We proceed as for $T_{n}\left( 2,k,\ell \right) $ by general statements 
(S1.4), (S2.4), (S3.4), etc... First, use (\ref{2.1}), (\ref{2.3a}) and (\ref{2.3b}) to
obtain :

\begin{equation*}
R_{2}\left( x_{n}\right) ^{-1}k^{1/2}A_{n}\left( 1,k,\ell \right) -\tau
\left( \widetilde{k},\ell \right)  =N_{n}\left( 3,k,\ell \right)
+Q_{n1}\left( k\right) +\left( \ell /k\right) ^{1/2}  
\end{equation*}

\begin{equation}
+Q_{n1}\left( \ell \right) +\ell k^{-1/2}Q_{n2}\left( \ell \right) +Q_{n3}, \label{4.5}
\end{equation}

\bigskip

\noindent where\\

\bigskip 
\small

$$
Q_{n1}\left( k\right) =\left( n/k\right)^{1/2} \left( \left\{ \int_{\widetilde{x}_{n}}^{\widetilde{z}_{n}}   \int_{y}^{\widetilde{z}_{n}} B_{n}\left( 1-G\left(t\right) \right) \text{ }dtdy\right\} -\left\{ \int_{y}^{\widetilde{z}_{n}} B_{n} \left(1-G(t)) \right) 
\text{ }dtdy \right\} \right) /R_{2}( x_{n},
$$

\bigskip

$$
Q_{n1}\left( \ell \right) =\left( n/\ell \right) ^{1/2}\left( \left\{
\int_{x_{n}}^{\widetilde{z}_{n}}\int_{y}^{\widetilde{z}_{n}}B_{n}\left(
1-G\left( t\right) \right) \text{ }dtdy\right\} -\left\{ \int_{x_{n}}^{%
\widetilde{z}_{n}}\int_{y}^{\widetilde{z}_{n}}B_{n}\left( 1-G\left( t\right)
\right) \text{ }dtdy\right\} \right) /R_{2}\left( x_{n}\right),
$$

\bigskip

$$
Q_{n2}\left( \ell \right) =\left( n/\ell \right) ^{1/2}\left( \left\{
\int_{x_{n}}^{\widetilde{z}_{n}}\int_{y}^{\widetilde{z}_{n}}B_{n}
(1-G(t)){ }dtdy\right\} -\left\{ \int_{\widetilde{x}%
_{n}}^{z_{n}}\int_{y}^{z_{n}}1-G\left( t\right) \text{ }dtdy\right\} \right) 
/R_{2}\left( x_{n}\right)$$,

\noindent and\\

$$
Q_{n3}=\left( n/k\right) ^{1/2}\left( \int_{\widetilde{x}_{n}}^{\widetilde{z%
}_{n}}\int_{y}^{\widetilde{z}_{n}}\left\{ \alpha _{n}\left( 1-G\left(
t\right) \right) -B_{n}\left( 1-G\left( t\right) \right) \right\} \text{ }%
dtdy\right) /R_{2}\left( x_{n}\right).
$$

\Large
\bigskip
\noindent We show that each of these error terms tends to zero in probability.

\begin{equation}
Q_{n3}\rightarrow _{p}0.  \tag{S1.4}
\end{equation}

\noindent If $\ell \rightarrow \infty $, we get, as in (\ref{3.5}), for some $\nu 
$, $0<\nu <1/4$, 

\begin{equation}
\left| Q_{n3}\right| \leq 0_{p}\left( k^{-\nu}\right) R_{2}\left( 
\widetilde{x}_{n},\widetilde{z}_{n},G_{1/2-\upsilon }\right) \text{ }/\text{ 
}R_{2}\left( x_{n}\right) \rightarrow _{p}0\text{,}  \label{4.6}
\end{equation}

\noindent by Lemmas \ref{l3.2} and \ref{l4.2}. Now, let $\ell $ be fixed. Thus 

\begin{equation*}
Q_{n3} = \left( \frac{n}{k}\right) ^{1/2}\left\{ \int_{\widetilde{x}%
_{n}}^{z_{n}}\int_{y}^{z_{n}}\circ +\left( n/k\right) ^{1/2}\int_{\widetilde{%
x}_{n}}^{t_{n}\left( \ell \right) }\int_{z_{n}}^{\widetilde{z}_{n}}\circ
-\left( n/k\right) ^{1/2}\int_{z_{n}}^{\widetilde{z}_{n}}\int_{y}^{%
\widetilde{z}_{n}}.\right\} \text{ }/\text{ }R_{2}\left( x_{n}\right)
\end{equation*}

\begin{equation}
= :Q_{n3}\left( 1\right) +Q_{n3}\left( 2\right) +Q_{n3}\left( 3\right), \label{4.7}
\end{equation}

\noindent where $\circ$ stands for $\alpha _{n}\left( 1-G\left( t\right) \right)
-B_{n}\left( 1-G\left( t\right) \right) $ $dt$ $dy$. One quickly obtains for some $\nu$, $0<\upsilon <1/4$, 

\begin{equation}
Q_{n3}\left( 1\right) =O_{p}\left( k^{-\upsilon }\right) \rightarrow _{p}0,%
\text{ }as\text{ }k\rightarrow +\infty .  \label{4.8}
\end{equation}

\noindent Next, by Lemma \ref{l3.2}, with PNOW as $n$ and $\lambda $ are large, $\left| Q_{n3}\left( 3\right) \right|$ is less than

\begin{equation*}
 \leq \left\{ \left( n/k\right)
^{1/2}\int_{z_{n}}^{t_{n}\left( \ell /\lambda \right) }\int_{y}^{t_{n}\left(
\ell /\lambda \right) }\left| \alpha _{n}\left( 1-G\left( t\right) \right)
-B_{n}\left( 1-G\left( t\right) \right) \right| \text{ }dtdy\right\} \text{ }%
/\text{ }R_{2}\left( x_{n}\right) 
\end{equation*}

\begin{equation}
\leq \text{ }:Q_{n3}^{\ast }\left(3\right) ,  \label{4.9}
\end{equation}

\noindent with by Lemma \ref{l3.2}, 

\begin{equation*}
E\text{ }Q_{n3}^{\ast }\left( 3\right) \leq 4\left\{ \left( n/k\right)
^{1/2}\int_{z_{n}}^{t_{n}\left( \ell /\lambda \right) }\int_{y}^{t_{n}\left(
\ell /\lambda \right) }\left( 1-G\left( t\right) \right) ^{1/2}dt\text{ }%
dy\right\} /R_{2}\left( x_{n}\right) 
\end{equation*}

\begin{equation}
\leq 4\ell ^{1/2}\frac{\left(
t_{n}\left( \ell /\lambda \right) -t_{n}\left( \ell \right) \right) ^{2}}{%
R_{2}\left( z_{n}\right) }\times \left( k^{1/2}R_{2}\left( x_{n}\right)
/R_{2}\left( z_{n}\right) \right) ^{-1}.  \label{4.10}
\end{equation}

\noindent By Lemma 1 in L\^{o} (1990) and Lemmas 4.1.2 and Formulas (\ref{3.13}) and (%
\ref{3.15}) above, $F\in D\left( \phi _{\gamma }\right) $ implies that $%
R_{2}\left( x_{n}\right) \rightarrow \gamma ^{-2},R_{2}\left( x_{n}\right)
\rightarrow \gamma ^{-2},\left( t_{n}\left( \ell /\gamma \right)
-t_{n}\left( \ell \right) \right) $ $/$ $R_{2}\left( z_{n}\right)
\rightarrow \left( \log \lambda \right) ^{2}$ so that $E$ $Q_{n3}\left(
3\right) \rightarrow 0$. By the same arguments $EQ_{n3}^{\ast }\left(
3\right) \rightarrow 0$ when $F\in D\left( \Lambda \right)$ since

$$
\rho _{n}^{-1}\left( 0\right) =k^{1/2}R_{2}\left( x_{n}\right)/%
R_{2}\left( z_{n}\right) \geq k^{1/2-\varepsilon }\ell ^{\varepsilon }/2,
$$

\noindent for any $\varepsilon $, $0<\varepsilon <1/2$, as $n$ is large enough (use
SVZ functions properties).\\

\noindent For $F\in D\left( \psi _{\gamma }\right)$,
$$R_{2}\left( z_{n}\right) \sim
\kappa \left( \gamma \right) R_{1}\left( z_{n}\right) ^{2}\sim \kappa
\left( \gamma \right) \left( \gamma +1\right) ^{-2}\left( y_{o}-G^{-1}\left(
1-\ell /n\right) \right) ^{2}
$$ 
\noindent 
and hence 

$$\left( t_{n}\left( \ell /n\right)
-t_{n}\left( \ell \right) \right) ^{2}/R_{2}\left( z_{n}\right) \rightarrow
\left( \gamma +2\right) ^{-1}\left( \lambda ^{-1/\gamma }-1\right) ^{-2},
$$

\noindent all that by Lemmas \ref{l3.4} and \ref{l4.1} which also implies for $0<\varepsilon
<1/\gamma ,$ $\rho _{n}^{-1}\left( 0\right) \geq k^{1/2}\left( k/\ell
\right) $ as $n$ is large. In all cases, $EQ_{n3}^{\ast }\left( 3\right)
\rightarrow 0.$ Finally, $Q_{n3}^{\ast }\left( 3\right) \rightarrow _{p}0$
and hence $Q_{n3}\left( 3\right) \rightarrow _{p}0$.\\

\noindent To finish, with PNOW, we have as $n$ and $\lambda $ are large, 

\begin{equation}
\left| Q_{n3}\left( 2\right) \leq Q_{n3}^{\ast }\left( 2\right) \right| ,
\label{4.11}
\end{equation}
with 
\begin{equation}
E\text{ }Q_{n3}^{\ast }\left( 2\right) \leq 4\ell ^{1/2}\kappa \left(
\gamma \right) ^{-1}\times \frac{t_{n}\left( \lambda k\right) -t_{n}\left(
\ell \right) }{k^{1/4}R_{1}\left( x_{n}\right) }\times \frac{t_{n}\left(
\ell /\lambda \right) -t_{n}\left( \ell \right) }{R_{1}\left( z_{n}\right) }%
\times \frac{R_{1}\left( z_{n}\right) }{k^{1/4}R_{1}\left( x_{n}\right) }.
\label{4.12}
\end{equation}

\noindent One shows exactly as above that 
$$
k^{1/4}R_{1}\left( x_{n}\right) R_{1}\left(z_{n}\right) \rightarrow +\infty 
$$ 

\noindent and 

$$\left( t_{n}\left( \ell /\lambda
\right) -t_{n}\left( \ell \right) \right) /R_{1}\left( z_{n}\right)
$$ 

\noindent is bounded as $n\rightarrow +\infty $ whenever $F\in \Gamma $. Also, 
$$
k^{-1/4}\left( t_{n}\left( \lambda k\right) -t_{n}\left( \ell \right)
\right) /R_{1}\left( x_{n}\right) \rightarrow 0
$$ 

\noindent obviously when $F\in D\left( \psi _{\gamma }\right) $ by Lemmas \ref{l3.4} and \ref{l4.1}.
If $F\in \left( \phi\right) \cup D\left( \Lambda \right) $, $G\in \left( \Lambda \right) $ and,
using (\ref{1.4}) and (\ref{3.15}) and SVZ functions properties, one has any 
$\varepsilon ,$ $0<\varepsilon <1/4$,

\begin{equation*}
k^{-1/4}\left( t_{n}\left( \lambda k\right) -t_{n}\left( \ell \right)
\right) /R_{1}\left( x_{n}\right) 
\end{equation*}

\begin{equation}
\leq \left\{ \left( 1+\varepsilon \right) 
\text{ }s\text{ }\left( \ell /n\right) /s\left( \lambda k/n\right) -1+\left(
1+\varepsilon \right) \times \left( \frac{k}{\ell }\right) ^{\varepsilon
}\log \left( \frac{\lambda k}{\ell }\right) \right\} k^{-1/4}.  \label{4.13}
\end{equation}

\noindent As in the preceding, $k^{-1/4}s\left( \ell /n\right) /s\left( \lambda
k/n\right) \rightarrow 0,k^{-1/4}R_{1}\left( z_{n}\right) /R_{1}\left(
x_{n}\right) \rightarrow 0$. One concludes that $E$ $Q_{n3}^{\ast }\left(
2\right) \rightarrow 0$ and hence $(S1.4)$ holds. 

\begin{equation}
Q_{n1}\left( k\right) \rightarrow \left( k\right) _{p}0.  \label{S2.4}
\end{equation}

\bigskip

\noindent We have

$$
R_{2}\left( x_{n}\right) Q_{n1}\left( k\right) = \left( n/k\right) ^{1//2}\int_{\widetilde{x}%
_{n}}^{x_{n}}\int_{y}^{x_{n}}B_{n}\left( 1-G\left( t\right) \right) \text{ }%
dtdy
$$

$$
+\left( n/k\right) ^{1/2}\int_{\widetilde{x}_{n}}^{x_{n}}\int_{x_{n}}^{%
\widetilde{z}_{n}}B_{n}\left( \&-G\left( t\right) \right) \text{ }%
dtdy 
$$

$$
 =:Q_{n1}\left( k,1\right)R_{2}\left( x_{n}\right)+Q_{n1}\left( k,2\right)R_{2}\left( x_{n}\right).
$$

\bigskip

\noindent It follows that for any $\lambda >1$, one has with PNOW as $n$ is large, 
\begin{equation}
\left| Q_{n1}\left( k,1\right) \right| \leq Q_{n1}^{\ast }\left( k,1\right)
;\ \left| Q_{n1}\left( k,2\right) \right| \leq Q_{n1}^{\ast }\left(
k,2\right)  \label{4.14}
\end{equation}

\noindent with 

\begin{equation*}
\mathbb{E}\text{ }Q_{n1}^{\ast }\left( k,1\right) \leq 3\kappa \left( \gamma
\right) ^{-1}\lambda ^{1/2}\left( t_{n}\left( k/\lambda \right) -t_{n}\left(
\lambda k\right) \right) ^{2}/R_{1}\left( x_{n}\right) ^{2},  \label{4.15}
\end{equation*}

\noindent and 

\begin{equation*}
\mathbb{E}Q_{n1}^{\ast }\left( k,2\right) \leq 3\kappa \left( \gamma \right) ^{-1}%
\frac{t_{n}\left( k/\lambda \right) -t_{n}\left( \lambda k\right) }{%
R_{1}\left( t_{n}\left( k\right) \right) }
\end{equation*}

\begin{equation}\times \frac{R_{1}\left(
x_{n},t_{n}\left( \ell /\lambda ^{\prime }G_{1/2}\right) \right) }{%
R_{1}\left( x_{n},G_{1/2}\right) }\times \frac{R_{1}\left( x_{n},G\right) }{%
R_{1}\left( x_{n},G_{1/2}\right) }
\end{equation}

\noindent where $\lambda ^{\prime }=\lambda $ for $\ell \rightarrow +\infty $ and $%
\lambda ^{\prime }$ is taken large for $\ell $ fixed. Arguments given in (%
\ref{3.24}) show that  $\mathbb{E}Q_{n1}^{\ast }\left( k,1\right) \rightarrow 0$
and a combination of these same arguments, Lemmas 7 and 8 in L\^{o}
(1990), Lemmas \ref{l3.2} and \ref{l3.3} above ensure that $\mathbb{E}Q_{n1}^{\ast }\left(
k,2\right) \rightarrow 0$. We conclude that $Q_{n1}^{\ast }\left( k,1\right)
+Q_{n1}^{\ast }\left( k,2\right) \rightarrow _{p}0$ and thus, by (\ref{4.8}%
), (\ref{S2.4}) holds. 

\begin{equation}
\left( \ell /k\right) ^{1/2}Q_{n1}\left( \ell \right) \rightarrow _{p}0.
\tag{S3.4}
\end{equation}

\noindent One has with PNOW as $n$ is large 

\begin{equation}
\left| \left( \ell /k\right) ^{1/2}Q_{n1}\left( \ell \right) \right| \leq
Q_{n1}\left( \ell ,1\right) +Q_{n1}^{\ast }\left( \ell ,2\right)
\label{4.17}
\end{equation}

\noindent with 

\begin{eqnarray}
E\text{ }Q_{n1}^{\ast }\left( \ell ,1\right) &\leq &3\kappa \left( \gamma
\right) ^{-1}\left( \lambda \ell /k\right) ^{1/2}\left( t_{n}\left(
k/\lambda _{1}\right) -t_{n}\left( k\right) \right)  \label{4.18} \\
&&\left( t_{n}\left( \ell /\lambda _{2}\right) -t_{n}\left( \ell \right)
\right) /R_{1}\left( x_{n}\right) ^{2}
\end{eqnarray}

\noindent and 

\begin{equation}
E\text{ }Q_{n1}^{\ast }\left( \ell ,2\right) \leq 3\kappa \left( \gamma
\right) ^{-1}\left( \ell /k\right) ^{1/2}\left( t_{n}\left( \ell /\lambda
_{2}\right) -t_{n}\left( \ell \right) \right) R_{1}\left( \widetilde{z}%
_{n},G_{1/2}\right) /R_{1}\left( x_{n}\right) ^{2},  \label{4.19}
\end{equation}

\noindent where $\lambda _{1}>1$ and either $\lambda _{2}=\lambda _{1}$ (for $%
\rightarrow +\infty $) or $\lambda _{2}$ is taken large (for $\ell $ fixed).\\

\noindent 
By the arguments many times used above, one has $\mathbb{E}Q_{n1}^{\ast }\left( \ell
,1\right) \rightarrow 0$ and $EQ_{n1}^{\ast }\left( \ell ,2\right)
\rightarrow 0$ and consequently, $\left( \ell /k\right) ^{1/2}Q_{n1}\left(
\ell \right) \rightarrow _{p}0$ whenever 
\begin{equation}
\rho _{n}\left( 1,\delta \right) =\left( \ell /k\right) ^{\delta
}R_{1}\left( z_{n}\right) /R_{1}\left( x_{n}\right) \rightarrow 0,
\label{4.20}
\end{equation}

\noindent for all $\delta >0$ and for all $df$ $F\in \Gamma $. But, $G\in D\left(
\Lambda \right) \cup D\left( \psi \right) $ and by Lemma \ref{l3.4}, 
\begin{equation}
\rho _{n}\left( 1,\delta \right) \leq 2\left( \ell /k\right) ^{\lambda
+\delta -\varepsilon },  \label{4.21}
\end{equation}

\noindent for $0<\varepsilon <\delta$, $\lambda =1/\gamma$  for $F\in D\left( \psi
_{\gamma }\right) $ or $\lambda =0$, $F\in D\left( \Lambda \right) \cup D\left(
\phi \right)$. This completes the proof of $(S3.4)$.

\begin{equation}
\ell k^{-1/2}Q_{n2}\left( \ell \right) \rightarrow_{P}0. \tag{S4.4}
\end{equation}

\noindent One has with PNOW $\left(\ell\right)\leq n$ is large, 

\begin{equation}
\left| \ell k^{-1/2}Q_{n2}\left( \ell \right) \right| \leq Q_{n2}^{\ast
}\left( \ell ,1\right) +Q_{n2}^{\ast }\left( \ell ,2\right) \label{4.22}
\end{equation}

\noindent with 

\begin{equation*}
Q_{n2}^{\ast }\left( \ell ,1\right) \leq \lambda \ell k^{-1/2}\left(
t_{n}\left( \ell \right) -t_{n}\left( \lambda _{1}k\right) \right) \text{ }%
t_{n}\left( \ell /\lambda _{2}\right) -t_{n}\left( \ell \right) /R_{2}\left(
t_{n}\left( k\right) \right)  \label{4.23}
\end{equation*}

\noindent and 

\begin{equation}
Q_{n2}^{\ast }\left( \ell ,2\right) \leq \ell k^{-1/2}\left( t_{n}\left(
\ell /\lambda _{2}\right) -t_{n}\left( \ell \right) \right) R_{1}\left( 
\widetilde{z_{n}}\right) /R_{2}\left( t_{n}\left( k\right) \right) ,
\label{4.24}
\end{equation}

\noindent where $\lambda _{1}>0$, $\lambda _{2}=\lambda _{1}$ or $\lambda _{2}$ is
taken large. Always by the now familiar arguments used above and by general properties of
SVZ functions, one shows that 
\begin{equation}
Q_{n2}^{\ast }\left( \ell ,2\right) \rightarrow _{p}0,  \label{4.25}
\end{equation}

\noindent and 

\begin{equation}
Q_{n2}^{\ast }\left( \ell ,1\right) \rightarrow _{p}0,  \label{4.26}
\end{equation}

\bigskip

\noindent (i) for $F\in D\left( \psi _{\gamma }\right) $ whenever $\ \lim
\sup_{n\rightarrow +\infty }$ or there exists $\xi $, $0<\xi <1/2$, such
that $\ell ^{1+1/\gamma }=o\left( k^{1/2-\xi +1/\gamma }\right)$;\\

\noindent (ii) for $F\in D\left( \phi _{\alpha }\right) $ whenever $\ell k^{-1/2}\log
k\rightarrow 0$ since $R_{1}\left( t\right) \sim R_{2}\left( t\right)
^{1/2}\rightarrow \alpha ^{-1}$ as $t\rightarrow +\infty $ and $t_{n}\left(
\ell \right) -t_{n}\left( \lambda _{1}k\right) =0\left( \log k\right) $ (see
(3.7a) in L\^{o} (1990) where $k\sim n^{\vartheta },$ $0<\vartheta <1)$;\\

\noindent (iii) and for $F\in D\left( \Lambda \right) $ whenever there exists $\eta ,$ 
$0<\eta <1/2$, such that $\ell =0\left( k^{1/2-\eta }\left( \log k\right)
^{2\eta +1}\right) $ since for any $\varepsilon <0$, as n is large, 

\begin{equation}
0\leq \left\{ t_{n}\left( \ell \right) -t_{n}\left( \lambda _{1}k\right)
\right\} /R_{1}\left( x_{n}\right) \leq 2\left\{ s\left( \ell
/n\right) \text{ }/\text{ }s\text{ }\left( k/n\right) \right\} +1+\left(
k/\ell \right) ^{\varepsilon }\log \left( \lambda _{1}k/\ell \right) .
\label{4.27}
\end{equation}

\noindent All these conditions are implied by the hypotheses on $\left( k,\ell \right)$. This completes the proof of $(S4.4)$. It remains to prove this important result.

\bigskip

\begin{lemma} \label{l4.3} Let $F\in \Gamma $, $k\rightarrow +\infty ,$ $k/n\rightarrow 0,$ $\ell
/k\rightarrow 0,$ then $N_{n}\left( 3,k,\ell \right) $ is a Gaussian $rv$
with mean zero such that $\mathbb{E}N_{n}\left( 3,k,\ell \right) ^{2}\sim
6R_{4}\left( x_{n},z_{n}\right) /R_{2}\left( x_{n}\right) ^{2}\sim \sigma
_{2}\left( \gamma \right) ,0<\gamma <+\infty $.
\end{lemma}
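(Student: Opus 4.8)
The goal is to prove that $N_n(3,k,\ell)$ is a mean-zero Gaussian random variable whose variance satisfies $\mathbb{E}N_n(3,k,\ell)^2 \sim 6R_4(x_n,z_n)/R_2(x_n)^2 \sim \sigma_2(\gamma)$. The Gaussianity is already delivered by the same argument as in \thmref{theo3}: $N_n(3,k,\ell)$ is defined as a normalized double (Riemann) integral of a single Brownian bridge $B_n$, hence it is an everywhere limit of finite linear combinations $\sum \tau_{ij}B_n(t_{ij})$, which are jointly Gaussian; thus $N_n(3,k,\ell)$ is Gaussian with mean zero. So the real content is the variance computation, and that is where I would put all the work.

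\textbf{The variance computation.} First I would write out $\mathbb{E}N_n(3,k,\ell)^2$ explicitly. Since
\[
N_n(3,k,\ell)=\Bigl\{(n/k)^{1/2}\int_{x_n}^{z_n}\!dy\int_{y}^{z_n}B_n(1-G(t))\,dt\Bigr\}/R_2(x_n),
\]
squaring and taking expectation produces a fourfold integral whose integrand is the bridge covariance $\mathbb{E}\bigl(B_n(1-G(t))B_n(1-G(t'))\bigr)=\min(\xi(t),\xi(t'))-\xi(t)\xi(t')$, where $\xi(s)=1-G(s)$. This is exactly the step performed for $N_n(0,\cdot)$ in \lemref{l3.5} around (\ref{3.32})--(\ref{3.33}); I would imitate it. Splitting the inner $t,t'$ region into $\{t<t'\}$ and $\{t\ge t'\}$ and discarding the product term $\xi(t)\xi(t')$ (which is asymptotically negligible by the same reasoning as in (\ref{3.33})) collapses the $\min$ into a clean iterated integral. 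The combinatorics of the two outer $y$-integrations together with the two inner $t$-integrations should reorganize the fourfold integral into a constant multiple of $R_4(x_n,z_n)$; carefully tracking the order of integration and the symmetry factor is what produces the leading constant $6$.

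\textbf{Passing to the limit constant.} Once $\mathbb{E}N_n(3,k,\ell)^2 \sim 6R_4(x_n,z_n)/R_2(x_n)^2$ is established, the final asymptotic equivalence to $\sigma_2(\gamma)$ follows by replacing $R_4(x_n,z_n)$ by $R_4(x_n)$ via \lemref{l4.2}(ii) (using $\ell/k\to 0$, so $(1-G(z_n))/(1-G(x_n))\to 0$), and then applying \lemref{l4.1}. For $F\in D(\psi_\gamma)$, part (i) of \lemref{l4.1} gives $R_4(x_n)\sim (y_0-x_n)^4\prod_{j=1}^{4}(\gamma+j)^{-1}$ and $R_2(x_n)\sim (y_0-x_n)^2\prod_{j=1}^{2}(\gamma+j)^{-1}$, so the ratio simplifies to $6(\gamma+1)(\gamma+2)/\{(\gamma+3)(\gamma+4)\}=\sigma_2(\gamma)$; for $F\in D(\phi)\cup D(\Lambda)$, part (ii) gives $R_4(x_n)\sim R_1(x_n)^4$ and $R_2(x_n)\sim R_1(x_n)^2$, yielding the $\gamma=+\infty$ value $6$ consistently.

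\textbf{Main obstacle.} The Gaussianity and the limit-evaluation steps are routine given the cited lemmas; the genuine difficulty is the bookkeeping in the fourfold integral, namely confirming that the two outer integrations against the $\min$-covariance reassemble exactly into $R_4(x_n,z_n)$ with the correct leading constant $6$ rather than some other combinatorial factor, and verifying that every cross term and every product-of-$\xi$ term is asymptotically negligible uniformly. I would therefore spend most of the effort making the $\{t<t'\}/\{t\ge t'\}$ splitting and the subsequent interchange of the order of integration rigorous, since a wrong constant here would propagate into the stated covariance matrices of \thmref{theo3}.
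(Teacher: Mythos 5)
Your plan follows the paper's own route step for step: Gaussianity via the Riemann-sum argument of Theorem C (Theorem \ref{theo3}); the variance written as a fourfold integral of the bridge covariance $h(s,t)=\min(\xi(s),\xi(t))-\xi(s)\xi(t)$, which is exactly the paper's (\ref{4.28}); and the final passage from $6R_{4}(x_{n},z_{n})/R_{2}(x_{n})^{2}$ to $\sigma_{2}^{2}(\gamma)$ via Lemma \ref{l4.2} and Lemma \ref{l4.1}, which you in fact carry out more explicitly than the paper does (the ratio $(\gamma+1)(\gamma+2)/\{(\gamma+3)(\gamma+4)\}$ for $D(\psi_{\gamma})$ and the value $6$ for $D(\phi)\cup D(\Lambda)$ are both correct).

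There is, however, a genuine gap: you stop exactly where the lemma's real content lies. You assert that the bookkeeping ``should reorganize the fourfold integral into a constant multiple of $R_{4}(x_{n},z_{n})$'' with constant $6$, flag it as the main obstacle, and defer it --- but you never produce the constant, and it does not come from a single symmetry count as your wording suggests. The paper gets it in three concrete moves: (i) by symmetry of $H(p,q)=\int_{p}^{z_{n}}\int_{q}^{z_{n}}h(s,t)\,dt\,ds$ in $(p,q)$, restrict to $p\leq q$ at the price of a factor $2$ (formula (\ref{4.29})); (ii) on that region, split the inner integration into $\{s\leq t\}$ and $\{s>t\}$, which yields one term equal to $2R_{4}(x_{n},z_{n})(1+o(1))$ plus a second term of the form $\frac{n}{k}\int_{x_{n}}^{z_{n}}\int_{p}^{z_{n}}(q-p)\int_{q}^{z_{n}}(1-G(t))\,dt\,dq\,dp$ (formula (\ref{4.31})); (iii) show that this second term is itself $\sim R_{4}(x_{n},z_{n})$ by an integration by parts with $v=q-p$ and $u=\int_{q}^{z_{n}}\int_{y}^{z_{n}}(1-G(t))\,dt\,dy$. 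The total is $2(2R_{4}+R_{4})=6R_{4}$, i.e. the $6$ arises as $2\times(2+1)$, with the ``$+1$'' supplied by the integration by parts. Step (iii) is the one nonobvious idea of the proof; without it the reduction you postpone does not close, and since this constant feeds directly into the covariance matrices of Theorem C, the proposal as written does not yet establish the lemma.
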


\noindent \textbf{Proof of Lemma \ref{l4.3}}. That $N_{n}\left( 3,k,\ell \right) $ is Gaussian follows
from Theorem C. It's variance is 

\begin{equation*}
s_{n}^{2}\left( 3\right) =\left( \frac{n}{k}\int_{x_{n}}^{z_{n}}%
\int_{x_{n}}^{z_{n}}\int_{p}^{z_{n}}\int_{q}^{z_{n}}h\left( s,t\right) \text{
}dt\text{ }ds\text{ }dp\text{ }dq\right) /R_{2}\left( x_{n}\right) ^{2}
\end{equation*}

\begin{equation}
=:\left( \frac{n}{k}\int_{x_{n}}^{z_{n}}\int_{x_{n}}^{z_{n}}H\left(
p,q\right) \text{ }dp\text{ }dq\right) /R_{2}\left( x_{n}\right) ^{2}, \label{4.28} \\
\end{equation}

\noindent where $h\left( s,t\right) =\min \left( 1-G\left( t\right) ,\text{ }1-G\left(
s\right) \right) -\left( 1-G\left( t\right) \right) \left( 1-G\left(
s\right) \right) .$

\noindent Using the symmetry of $H\left( \circ ,\circ \right) $ and considering the
case $p\leq t$ and $p>t$ yield 

\begin{equation}
s_{n}^{2}\left( 3\right) \sim \left\{ \left( 2n/k\right)
\int_{x_{n}}^{z_{n}}\int_{p}^{z_{n}}H\left( p,q\right) \text{ }dp\text{ }%
dq\right\} /R_{2}\left( x_{n}\right) ^{2}.  \label{4.29}
\end{equation}

\noindent Further, cutting the integration space into $\left\{ s\leq t\right\} $ and $%
\left\{ s>t\right\} $ gives 
\begin{equation}
s_{n}^{2}\left( 3\right) \sim 2\left( 2R_{4}\left( x_{n},z_{n}\right) \left(
1+o\left( 1\right) \right) +\left( \frac{n}{k}\int_{x_{n}}^{z_{n}}%
\int_{p}^{z_{n}}\left( q-p\right) \int_{q}^{z_{n}}1-G\left( t\right) \text{ }%
dt\text{ }dq\text{ }dp\right) \right)  \label{4.31}
\end{equation}
The second term in brackets is $\sim R_{4}\left( x_{n},z_{n}\right) $ by an
integration by parts with

$v=q-p;$ $u=\int_{q}^{z_{n}}\int_{y}^{z_{n}}1-G\left( t\right) $ $dtdy$.
Finally, 
\begin{equation}
s_{n}^{2}\left( 3\right) \sim 6R_{4}\left( x_{n},z_{n}\right) /R_{2}\left(
x_{n}\right) ^{2}.  \label{4.3}
\end{equation}

\noindent Lemmas \ref{l4.1} and \ref{l4.2} thus complete the proof of Lemma \ref{l4.3}. Theorem \ref{t4.1}. is proved by (S1.4), ..., (S4.4) and Lemma \ref{l4.3}.

\bigskip

\noindent The first part of Remark \ref{r4.2} follows from the lines just below (\ref{4.26}). The second part of Remark \ref{r4.2} follows by remarking that 

\begin{equation*}
k^{1/2}\left| \left\{ \tau \left( \widetilde{k},\ell \right) -\tau \left( 
\widetilde{k}\right) \right\} /R_{2}\left( x_{n}\right) \right| 
\end{equation*}

\begin{equation*}
\leq \ell k^{-1/2}\left( t_{n}\left( \ell \right) -t_{n}\left( \lambda k\right)
\right) R_{1}\left( t_{n}\left( \ell \right) /R_{2}\left( t_{n}\left(
k\right) \right) \right) 
\end{equation*}

\begin{equation}
 +\ell k^{-1/2}R_{2}\left( t_{n}\left( k\right)
\right) ,  \label{4.32}
\end{equation}

\noindent for $\lambda >1$, with PNOW as n is large. Both terms at right tend to zero
exactly as in (\ref{4.24}) and (\ref{4.25}). Remark \ref{r4.2} is now completely
justified.

\noindent To prove Theorem \ref{t4.2}, remark that, by Theorem \ref{t4.1}, 
\small
\begin{equation*}
A_{n}^{\ast } =N_{n}\left( 3,k,\ell \right) +k^{1/2}\left( \frac{n}{k}%
\int_{\widetilde{x}_{n}}^{z_{n}}\int_{y}^{z_{n}}1-G\left( t\right) \text{ }dt%
\text{ }dy-\frac{n}{k}\int_{x_{n}}^{z_{n}}\int_{y}^{z_{n}}1-G\left( t\right) 
\text{ }dtdy\right) /R_{2}\left( x_{n}\right) +o_{p}\left( 1\right)
\end{equation*}

\Large
\begin{equation}
=:N_{n}\left( 3,k,\ell \right) +Q_{n2}\left( k\right) +o_{p}\left(
1\right) \label{4.33}
\end{equation}

\noindent But, as in (\ref{3.40}), 

\small
\begin{equation*}
Q_{n2}\left( k\right) =k^{1/2}\left( \frac{n}{k}\int_{\widetilde{x}%
_{n}}^{x_{n}}\int_{y}^{x_{n}}1-G\left( t\right) \text{ }dt+k^{1/2}\left( 
\frac{n}{k}\int_{\widetilde{x}_{n}}^{z_{n}}\int_{x_{n}}^{z_{n}}1-G\left(
t\right) \text{ }dtdy\right) \right) \\
/R_{2}\left( x_{n}\right)
\end{equation*}

\Large
\begin{equation}
=\left( 2k^{1/2}\kappa \left( \gamma \right)
\right) ^{-1\xi }\left( k\right) ^{2}\left( 1\right) +\kappa \left(
\gamma \right) ^{-1\xi }\left( k\right) \left( 1+o_{p}\left( 1\right)
\right) ,
\end{equation}

\noindent where $\xi _{n}\left( \gamma \right) $ is defined in Lemma \ref{l3.6} by which 

\begin{equation*}
A_{n}^{\ast}/(2k^{1/2}\kappa \left( \gamma \right) ^{-1})=
e_{1}\left( \gamma \right) N_{n}\left( 2,k\right) +e_{2}\left( \gamma
\right) N_{n}\left( -\infty \right) \left( 1+o_{p}\left( 1\right) \right)
^{2}
\end{equation*}

\begin{equation}
+e_{4}\left( \gamma \right) N_{n}\left( -\infty \right) .
\label{4.35}
\end{equation}

\noindent It remains to compute $EN_{n}\left( 4,k,\ell \right) ^{2}$. Remark that $%
h\left( t,\frac{k}{n}\right) =\left( 1-G\left( t\right) \right) \left(
1-k/n\right) $ for $x_{n}\leq t\leq z_{n}$ (see (\ref{4.28})) and thus 

\begin{equation*}
\mathbb{E}N_{n}\left( 3,k,\ell \right) N_{n}\left( 2,k\right) 
\end{equation*}

\begin{equation}
-\left( n/k\right)
\int_{x_{n}}^{z_{n}}\int_{y}^{z_{n}}h\left( t,\frac{k}{n}\right) \text{ }dt%
\text{ }dy/R_{2}\left( x_{n}\right) \rightarrow -1.  \label{4.36}
\end{equation}

\noindent This easily implies that 
\begin{equation}
\mathbb{E}N_{n}\left( 4,k,\ell \right) ^{2}=\sigma _{3}^{2}\left( \gamma \right)
-2e_{3}\left( \gamma \right) ^{2},\sigma _{3}^{2}\left( \infty \right) =5.
\label{4.37}
\end{equation}

\noindent We have proved the first part of Theorem \ref{t4.1}. The characterization is
obvious now when we remember that $k^{-1/2}N_{n}\left( -\infty \right)
=f\left( U_{k+1,n}\right) -f\left( k/n\right) \rightarrow _{p}0$.

\bigskip

\section{Limit laws for $\widetilde{C}_{n}=Y_{n-\ell ,n}-Y_{n-k,n}=%
\widetilde{z}_{n}-\widetilde{x}_{n}$} \label{sec5}

\noindent We assume from now on that the regularity conditions (\ref{1.7}) or (\ref
{1.8}) or (\ref{1.9}) hold for sake of simplicity. But it
will appear in the proofs how optimum results may be obtained. Notice
that (\ref{1.8}) or (\ref{1.9}) is required only when $\ell
\rightarrow +\infty $. Put $C_{n}=z_{n}-x_{n}$.

\bigskip

\begin{theorem} \label{t5.1}. Let $F\equiv \left( f,b\right) \in \Gamma $ satisfying the regularity
conditions. Let (\ref{2.4a}) holds.\\

\noindent 1) If $F\in D\left( \Lambda \right) \cup D\left( \phi \right) $, then

$$
\left( \widetilde{C}_{n}-C_{n}\right) /R_{1}\left( z_{n}\right) =\left( 
\widetilde{z}_{n}-z_{n}\right) /R_{1}\left( z_{n}\right) +o_{p}\left(
1\right) 
$$ 

$$
=-\log E_{n}\left( \ell \right) +o_{p}\left( 1\right) \rightarrow
^{d}-\log E\left( \ell \right),
$$ 

\noindent when is $\ell $ fixed and\\

$$
\ell ^{1/2}\left( \widetilde{C}_{n}-C_{n}\right) /R_{1}\left( z_{n}\right)
=\ell ^{1/2}\left( \widetilde{z}_{n}-z_{n}\right) /R_{1}\left( z_{n}\right)
+o_{p}\left( 1\right)$$

$$
=e_{1}\left( \gamma \right) N_{n}\left( 2,\ell \right)
+o_{p}\left( 1\right),
$$ 

\noindent when $\ell \rightarrow +\infty $ and $\ell /k\rightarrow 0$.\\

\noindent 2) Let $F\in D\left( \psi _{\gamma }\right)$\\

\noindent a) If $\gamma >2$, then

$$
\left( \widetilde{C}_{n}-C_{n}\right) /R_{1}\left( z_{n}\right) =\left( 
\widetilde{z}_{n}-z_{n}\right) /R_{1}\left( z_{n}\right) +o_{p}\left(
1\right)
$$

$$
=\left( \gamma +1\right) \left( 1-E_{n}\left( \ell \right)
^{1/\gamma }\right) +o_{p}\left( 1\right) \rightarrow ^{d} \left( \gamma +1\right) \left( 1-E\left( \ell \right) ^{1/\gamma }\right)
$$

\noindent when $\ell$ is fixed and\\

$$
\ell ^{1/2}\left( \widetilde{C}_{n}-C_{n}\right) /R_{1}\left( z_{n}\right)
=\ell ^{1/2}\left( \widetilde{z}_{n}-z_{n}\right) /R_{1}\left( z_{n}\right)
+o_{p}\left( 1\right)
$$

$$
=e_{1}(\gamma) N_{n}(2,\ell)+o_{p}(1),
$$

\noindent when $\ell \rightarrow +\infty$, $\ell/k\rightarrow 0$.\\

\noindent b) If $0<\gamma <2$, then

$$
k^{1/2}\left( \widetilde{C}_{n}-C_{n}\right) /R_{1}\left( x_{n}\right)
=e_{1}\left( \gamma \right) N_{n}\left( 2,k\right) +o_{p}\left( 1\right)
\rightarrow ^{d}N\left( 0,e_{1}\left( \gamma \right) ^{2}\right),
$$

\noindent in both cases where $\ell $ is fixed and $\ell \rightarrow +\infty $ while $%
\ell /k\rightarrow 0$.
\end{theorem}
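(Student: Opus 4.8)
The plan is to write $\widetilde{C}_n - C_n = (\widetilde{z}_n - z_n) - (\widetilde{x}_n - x_n)$ and to analyse the two increments separately, the whole point being that exactly one of them dominates, with the choice governed by $\gamma$ versus $2$. Throughout I would use that $1 - G(\widetilde{x}_n) = U_{k+1,n}$ and $1 - G(\widetilde{z}_n) = U_{\ell+1,n}$, together with $1 - G(x_n) = k/n$ and $1 - G(z_n) = \ell/n$ (the small discrepancies caused by possible flat parts of $G$ being absorbed by \lemref{l3.2}), so that the relevant ratios of tail values reduce to $E_n(\ell) = nU_{\ell+1,n}/\ell = (1-G(\widetilde{z}_n))/(1-G(z_n))$.

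For the increment at index $k$ I would simply quote \lemref{l3.6}, which, under the regularity conditions and the reasoning of Corollary~\ref{c3.3} that kills the $\gamma_n$ term, already gives
\[
k^{1/2}(\widetilde{x}_n - x_n)/R_1(x_n) = e_1(\gamma)N_n(2,k) + o_p(1).
\]
When $\ell \to +\infty$ the same lemma applied with $\ell$ in place of $k$ furnishes $\ell^{1/2}(\widetilde{z}_n - z_n)/R_1(z_n) = e_1(\gamma)N_n(2,\ell) + o_p(1)$. The genuinely new work is the fixed-$\ell$ behaviour of $\widetilde{z}_n - z_n$, which is an extreme, not an intermediate, fluctuation. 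Here I would invoke the local representations of \secref{sec3}: for $G \in D(\Lambda)$ the relation (\ref{3.11}), namely $(1-G(z_n + tR_1(z_n)))/(1-G(z_n)) \to e^{-t}$, inverted (via its monotonicity and local uniformity) at $t = (\widetilde{z}_n - z_n)/R_1(z_n)$ and combined with $(1-G(\widetilde{z}_n))/(1-G(z_n)) = E_n(\ell)$, yields $(\widetilde{z}_n - z_n)/R_1(z_n) = -\log E_n(\ell) + o_p(1)$; for $G \in D(\psi_\gamma)$ the fact that $y_0 - G^{-1}(1-u)$ is RVZ of exponent $1/\gamma$ (representation (\ref{2.6})) gives $(y_0 - \widetilde{z}_n)/(y_0 - z_n) = E_n(\ell)^{1/\gamma}(1+o_p(1))$, and since $R_1(z_n) \sim (y_0 - z_n)/(\gamma+1)$ by \lemref{l4.1}(i) this becomes $(\gamma+1)(1 - E_n(\ell)^{1/\gamma}) + o_p(1)$. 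In both fixed-$\ell$ cases Theorem~\ref{theo3}(2) supplies $E_n(\ell) \to_d E(\ell)$, and the continuous mapping theorem closes the distributional statements.

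The dichotomy then comes from measuring the subordinate increment against the chosen normalisation, and this is where I expect the only real care to be needed. By \lemref{l3.4}, $R_1(G^{-1}(1-\cdot))$ is SVZ in $D(\Lambda)\cup D(\phi)$ and RVZ of exponent $1/\gamma$ in $D(\psi_\gamma)$, so $R_1(x_n)/R_1(z_n)$ is $o((k/\ell)^{\varepsilon})$ for every $\varepsilon>0$ in the former case and $\asymp (k/\ell)^{1/\gamma}$ in the latter. Feeding \lemref{l3.6} through these ratios, the subordinate increment carries a factor of order $k^{-1/2}(k/\ell)^{1/\gamma}$ (for fixed $\ell$) or $(\ell/k)^{1/2 - 1/\gamma}$ (for $\ell \to +\infty$), which tends to $0$ precisely when $\gamma > 2$ if one normalises by $R_1(z_n)$ and discards $\widetilde{x}_n - x_n$, and precisely when $\gamma < 2$ if one normalises by $R_1(x_n)$ and discards $\widetilde{z}_n - z_n$ (the SVZ case behaving like $\gamma = +\infty > 2$). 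This is exactly the split 1), 2a) versus 2b) in the statement, and it explains why the critical value $\gamma = 2$ must be excluded.

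The main obstacle I anticipate is the boundary bookkeeping when $\ell \to +\infty$: one must verify $(\ell/k)^{1/2}R_1(x_n)/R_1(z_n) \to 0$ for the $z_n$-normalised cases and $(k/\ell)^{1/2}R_1(z_n)/R_1(x_n) \to 0$ for the $x_n$-normalised case, under the hypothesis $\ell/k \to 0$ alone, using Potter-type bounds for the SVZ/RVZ factors; the strict inequalities $\gamma > 2$ and $\gamma < 2$ are precisely what provide the needed room. Granting this, Theorem~\ref{t5.1} follows by assembling the dominant increment with the negligibility of the other and invoking $N_n(2,\cdot) \to_d N(0,1)$ for the $\ell \to +\infty$ limits and $E_n(\ell) \to_d E(\ell)$ for the fixed-$\ell$ limits.
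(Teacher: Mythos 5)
Your proposal is correct and follows essentially the same route as the paper's proof: the same decomposition $\widetilde{C}_n-C_n=(\widetilde{z}_n-z_n)-(\widetilde{x}_n-x_n)$, Lemma~\ref{l3.6} (with the regularity conditions killing the $k^{1/2}\gamma_n$ terms) for the intermediate increments, the dominance dichotomy at $\gamma=2$ obtained from the SVZ/RVZ behaviour of $R_1(G^{-1}(1-\cdot))$ of Lemma~\ref{l3.4} (which the paper packages as the ratios $\rho_n(2,1/2)$, $\rho_n(3,1/2)$ of Lemma~\ref{l5.1}), and Theorem~\ref{theo3}(2) for the fixed-$\ell$ extremal limits. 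The only divergence is the fixed-$\ell$, $G\in D(\Lambda)$ sub-case, where the paper computes $(\widetilde{z}_n-z_n)/R_1(z_n)=-\log E_n(\ell)+o_p(1)$ directly from the de Haan representation (\ref{1.4}) together with (\ref{3.15}) (formulas (\ref{5.4})--(\ref{5.5})), whereas you invert the von Mises-type relation (\ref{3.11}); the two arguments are equivalent, but yours needs the tightness and local-uniformity step you only gesture at, which the representation route gets for free.
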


\begin{remark} \label{r5.1} These results notably extend earlier results by de Haan and Resnick
(1980) and by L\^{o} (1986b).
\end{remark}

\begin{remark} \label{r5.2} $\widetilde{z}_{n}$ dominates $\widetilde{x}_{n}$ is these results
since $\widetilde{C}_{n}$ follows the law provided by $\widetilde{z}_{n}$
except when $F\in D\left( \psi _{\gamma }\right) ,$ $0<\gamma <2.$ This fact
will occur many times in the sequel.
\end{remark}

\begin{corollary} \label{c5.1} Let $F\equiv \left( f,b\right) \in \Gamma $ $\ $\ satisfying the
regularity conditions and let (\ref{2.4a}) hold.\\

\noindent a) Put $c_{n}\left( 8\right) =z_{n}-x_{n},$ $c_{n}\left( 9\right) =\left(
y_{o}-x_{n}\right) ,$ $v_{n}\left( 8\right) =\ell ^{1/2}/R_{1}\left(
z_{n}\right) $ and $v_{n}\left( 9\right) =\ell ^{1/2}R_{1}\left(
x_{n}\right) $\\

\noindent If $F\in D\left( \Lambda \right) \cup D\left( \phi \right)$, then

$$
\left( n^{\upsilon }T_{n}\left( 8\right) ^{-1}-c_{n}\left( 8\right) \right)
/R_{1}\left( z_{n}\right) \rightarrow ^{d}-\log E\left( \ell \right)
$$ 

\noindent when $\ell$ is fixed and\\

$$
v_{n}\left( 8\right) \left( n^{\upsilon }T_{n}\left( 8\right)
^{-1}-c_{n}\left( 8\right) \right) =-N_{n}\left( 2,\ell \right) +o_{p}\left(
1\right) \rightarrow ^{d}N_{n}\left( 0,1\right)
$$

\noindent when $\ell \rightarrow+\infty$, $\ell /k\rightarrow 0.$\\

\noindent If $F\in D\left( \psi _{\gamma }\right)$, $\gamma >2$, then for $v_{n}^{\ast}\left( 8\right) =k^{1/2}/R_{1}\left( x_{n}\right)$,\\

$$
v_{n}^{\ast }\left( 8\right) \left( n^{\upsilon }T_{n}\left( 8\right)
^{-1}-c_{n}\left( 8\right) \right) =e_{1}\left( \gamma \right) N_{n}\left(
2,k\right) +o_{p}\left( 1\right),
$$

\noindent  when $\ell $ is fixed of $\ell \rightarrow +\infty $ while $\ell
/k\rightarrow 0$.\\

\bigskip

\noindent If $F\in D\left( \psi _{\gamma }\right) ,$ $0<\gamma <+\infty ,$ then

\noindent $R_{1}\left( x_{n}\right) \left( T_{n}\left( 9\right) -c_{n}\left( 9\right)
\right) /R_{1}\left( z_{n}\right) \rightarrow ^{d}\left( E\left( \ell
\right) ^{1/\gamma }-1\right) $ when $\ell $ is fixed and\\

\bigskip 

\noindent $v_{n}\left( 9\right) \left( T_{n}\left( 9\right) \right) -c_{n}\left(
9\right) =\gamma ^{-1}N_{n}\left( 2,\ell \right) +o_{p}\left( 1\right) $
when $\ell \rightarrow +\infty $ while $\ell /k\rightarrow 0$.
\end{corollary}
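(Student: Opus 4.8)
The plan is to derive both families of limit laws from Theorem~\ref{t5.1}, since $T_n(8)$ and $T_n(9)$ are explicit algebraic functions of the increment $\widetilde{C}_n=\widetilde{z}_n-\widetilde{x}_n$ whose law is already established there. The case of $T_n(8)$ is immediate: by definition $n^{\upsilon}T_n(8)^{-1}=\widetilde{z}_n-\widetilde{x}_n=\widetilde{C}_n$ and $c_n(8)=z_n-x_n=C_n$, so that
$$
n^{\upsilon}T_n(8)^{-1}-c_n(8)=\widetilde{C}_n-C_n .
$$
Multiplying by the relevant normalizing factor and reading off the corresponding line of Theorem~\ref{t5.1} gives each assertion at once, after simplifying the constants (for $F\in D(\phi)\cup D(\Lambda)$ one uses $e_1(\infty)=1$). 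The only verification is the matching of regimes: the limit is governed by $\widetilde{z}_n$ (normalization $1/R_1(z_n)$, resp.\ $v_n(8)=\ell^{1/2}/R_1(z_n)$) for $F\in D(\Lambda)\cup D(\phi)$ and for $F\in D(\psi_\gamma)$ with $\gamma>2$, while for $0<\gamma<2$ it is governed by $\widetilde{x}_n$ (normalization $v_n^{\ast}(8)=k^{1/2}/R_1(x_n)$); this is precisely the trichotomy of parts 1), 2a) and 2b) of Theorem~\ref{t5.1}, the threshold $\gamma=2$ arising from the competition between the $k^{-1/2}$ scale of $(\widetilde{x}_n-x_n)/R_1(x_n)$ and the $\ell^{-1/2}$ scale of $(\widetilde{z}_n-z_n)/R_1(z_n)$.

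For $T_n(9)$ (defined only for $F\in D(\psi_\gamma)$, where $y_0<+\infty$) I would first factor out the deterministic ratio. Writing $y_0-\widetilde{z}_n=(y_0-z_n)-(\widetilde{z}_n-z_n)$ and similarly for $\widetilde{x}_n$,
$$
T_n(9)=\frac{y_0-\widetilde{z}_n}{y_0-\widetilde{x}_n}
=\frac{y_0-z_n}{y_0-x_n}\cdot
\frac{1-(\widetilde{z}_n-z_n)/(y_0-z_n)}{1-(\widetilde{x}_n-x_n)/(y_0-x_n)},
$$
so the natural centering is $c_n(9)=(y_0-z_n)/(y_0-x_n)$, which by Lemma~\ref{l4.1}(i) (using $R_1(\cdot)\sim(y_0-\cdot)/(\gamma+1)$) is asymptotic to $R_1(z_n)/R_1(x_n)$. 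The denominator increment satisfies $a_n:=(\widetilde{x}_n-x_n)/(y_0-x_n)=O_p(k^{-1/2})\to_p0$ for every $\gamma$, since $k\to\infty$ and the regularity conditions make $k^{1/2}(\widetilde{x}_n-x_n)/R_1(x_n)=e_1(\gamma)N_n(2,k)+o_p(1)$ (Lemma~\ref{l3.6}).

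When $\ell$ is fixed, the denominator factor above tends to $1$ in probability, whereas by the representation (\ref{2.6}) together with Theorem~\ref{theo3} (exactly as in the proof of Theorem~\ref{t5.1}, 2a), and valid for all $\gamma$) the numerator factor $1-(\widetilde{z}_n-z_n)/(y_0-z_n)=(y_0-\widetilde{z}_n)/(y_0-z_n)$ converges in distribution to $E(\ell)^{1/\gamma}$. Slutsky's lemma then yields $T_n(9)/c_n(9)\to^{d} E(\ell)^{1/\gamma}$, which is the same as $(R_1(x_n)/R_1(z_n))(T_n(9)-c_n(9))\to^{d} E(\ell)^{1/\gamma}-1$. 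When $\ell\to+\infty$ with $\ell/k\to0$, both relative increments are $o_p(1)$, so the ratio may be linearized:
$$
T_n(9)-c_n(9)=c_n(9)\Big(a_n-\tfrac{\widetilde{z}_n-z_n}{y_0-z_n}\Big)+o_p(c_n(9)\ell^{-1/2}).
$$
Because $\ell/k\to0$, the $a_n=O_p(k^{-1/2})$ contribution is negligible against the $O_p(\ell^{-1/2})$ term $c_n(9)(\widetilde{z}_n-z_n)/(y_0-z_n)=(\widetilde{z}_n-z_n)/(y_0-x_n)$; substituting $\ell^{1/2}(\widetilde{z}_n-z_n)/R_1(z_n)=e_1(\gamma)N_n(2,\ell)+o_p(1)$ and $e_1(\gamma)/(\gamma+1)=\gamma^{-1}$ gives $v_n(9)(T_n(9)-c_n(9))=\gamma^{-1}N_n(2,\ell)+o_p(1)$ with $v_n(9)=\ell^{1/2}R_1(x_n)/R_1(z_n)$ (the sign being fixed by the convention for $N_n(2,\cdot)$).

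The main obstacle is the bookkeeping for $T_n(9)$: one must recognize that the ratio can be expanded to first order only when $\ell\to+\infty$, and must treat the $\ell$-fixed case separately through Slutsky rather than by a Taylor expansion, because there the numerator factor converges to the genuinely random, non-Gaussian limit $E(\ell)^{1/\gamma}$ and is \emph{not} asymptotically negligible. Once this distinction is made, and once one tracks which of $\widetilde{x}_n$, $\widetilde{z}_n$ dominates in each regime, both statements reduce to the two elementary identities above combined with the marginal laws of $\widetilde{x}_n$ and $\widetilde{z}_n$ supplied by Theorem~\ref{t5.1} and Lemma~\ref{l3.6}.
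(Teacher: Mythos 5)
Your proposal is correct and follows essentially the same route as the paper: the $T_{n}(8)$ statements are read off directly from Theorem \ref{t5.1} after noting $n^{\mp\upsilon}T_{n}(8)^{-1}-c_{n}(8)=\widetilde{C}_{n}-C_{n}$, and $T_{n}(9)$ is reduced to the increments $\widetilde{z}_{n}-z_{n}$ and $\widetilde{x}_{n}-x_{n}$ using Lemma \ref{l3.6} and the relation $R_{1}(\cdot)\sim (y_{0}-\cdot)/(\gamma+1)$ of Lemma \ref{l4.1}, with the $\widetilde{x}_{n}$ contribution negligible when $\ell/k\rightarrow 0$. Your multiplicative factorization of $T_{n}(9)$ around $c_{n}(9)=(y_{0}-z_{n})/(y_{0}-x_{n})$ is algebraically equivalent to the paper's exact identity (\ref{5.7}), and your Slutsky treatment of the fixed-$\ell$ case (where the limit $E(\ell)^{1/\gamma}$ is genuinely random and non-Gaussian) corresponds to the paper's instruction to follow the lines of the proof of Theorem \ref{t5.1}.
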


\bigskip

\begin{remark} \label{r5.3} For $\gamma =2,$ the results depend crucially on b$\left(.\right) $
(see Lemma \ref{l6.1} and remark \ref{r6.1} below). Mixture cases are signaled in these
examples.
\end{remark}

\bigskip

\begin{proof} First, the following lemma is a direct consequence of properties of $\rho
-RVZ$ functions. $\rho \geq 0$.

\begin{lemma} \label{l5.1}. Let 

$$
\rho_{n}\left( 2,1/2\right) =k^{-1/2}R_{1}\left( x_{n}\right)
\left( z_{n}\right), \text{ } \rho _{n}\left( 3,1/2\right)
$$

$$
=\ell^{1/2}k^{-1/2}R_{1}\left( x_{n}\right) /R_{1}\left( z_{n}\right)
$$.

\bigskip

If $F\in \Gamma ,$ then $\rho _{n}\left( 2,1/2\right) \rightarrow 0$ as $%
n\rightarrow +\infty ,$ $k\rightarrow +\infty ,$ $k/n\rightarrow 0,$ $\ell $
being fixed and\\

$\rho _{n}\left( 3,1/2\right) \rightarrow 0$ as $n\rightarrow +\infty ,$ $%
\ell \rightarrow +\infty ,$ $k\rightarrow +\infty ,$ $k/n\rightarrow 0$, $%
\ell /k\rightarrow 0,$ for $2<\gamma \leq +\infty $.

\noindent For $0<\gamma <2,$ both limits are infinite.
\end{lemma}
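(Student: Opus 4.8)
The plan is to reduce both quantities to a single regularly varying function and then read off the exponents via Potter's bounds. Recalling that $x_{n}=G^{-1}(1-k/n)$ and $z_{n}=G^{-1}(1-\ell/n)$, I would set $h(u)=R_{1}(G^{-1}(1-u))$, so that $R_{1}(x_{n})=h(k/n)$ and $R_{1}(z_{n})=h(\ell/n)$, whence
$$
\rho_{n}(2,1/2)=k^{-1/2}\,\frac{h(k/n)}{h(\ell/n)}\quad\text{and}\quad \rho_{n}(3,1/2)=\left(\frac{\ell}{k}\right)^{1/2}\frac{h(k/n)}{h(\ell/n)}.
$$
By Lemma \ref{l3.4}, $h$ is SVZ when $F\in D(\phi)\cup D(\Lambda)$ (the case $\gamma=+\infty$) and is $(1/\gamma)$-RVZ when $F\in D(\psi_{\gamma})$; equivalently $h(u)=u^{1/\gamma}\widetilde{L}(u)$ with $\widetilde{L}$ slowly varying at zero, where $1/\gamma=0$ in the Gumbel and Fr\'echet cases.

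Next I would invoke the Potter (uniform convergence) bounds for functions regularly varying at zero: for every $\varepsilon>0$ there is $u_{0}>0$ such that, for all $0<v\le u\le u_{0}$, the ratio $h(u)/h(v)$ lies between constant multiples of $(u/v)^{1/\gamma-\varepsilon}$ and $(u/v)^{1/\gamma+\varepsilon}$. Applying this with $u=k/n$ and $v=\ell/n$, legitimate since $\ell<k$ forces $u/v=k/\ell\ge1$, squeezes $h(k/n)/h(\ell/n)$ between constant multiples of $(k/\ell)^{1/\gamma-\varepsilon}$ and $(k/\ell)^{1/\gamma+\varepsilon}$. Substituting, I find that $\rho_{n}(2,1/2)$ is sandwiched between constant multiples of $k^{-1/2+1/\gamma\pm\varepsilon}$ (the fixed factor $\ell^{-1/\gamma\mp\varepsilon}$ being absorbed into the constant), while $\rho_{n}(3,1/2)$ is sandwiched between constant multiples of $(k/\ell)^{1/\gamma-1/2\pm\varepsilon}$.

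The conclusions then follow by inspecting the sign of the leading exponent. For $\rho_{n}(2,1/2)$ the controlling power is $k^{-1/2+1/\gamma}$, and for $\rho_{n}(3,1/2)$ it is $(k/\ell)^{1/\gamma-1/2}$. When $2<\gamma\le+\infty$ one has $1/\gamma<1/2$, so both exponents are strictly negative; choosing $\varepsilon<1/2-1/\gamma$ preserves the sign and forces both ratios to $0$, using $k\to+\infty$ respectively $k/\ell\to+\infty$. When $0<\gamma<2$ one has $1/\gamma>1/2$, the exponents reverse sign, and choosing $\varepsilon<1/\gamma-1/2$ drives both ratios to $+\infty$. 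The one step demanding care is the absorption of the slowly varying factor $\widetilde{L}$ into the $\pm\varepsilon$ of Potter's bounds, and this is precisely where the strict separation $\gamma\ne2$ is essential: at $\gamma=2$ the leading power vanishes and the limit is governed entirely by $\exp(\int_{\cdot}^{1}b(t)t^{-1}\,dt)$, so no universal value exists (cf. Remark \ref{r5.3}). This is the main obstacle, and it explains why the statement excludes $\gamma=2$.
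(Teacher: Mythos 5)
Your proof is correct and is essentially the argument the paper intends: the paper disposes of Lemma~\ref{l5.1} with the single assertion that it is a ``direct consequence of properties of $\rho$-RVZ functions'' (i.e.\ of Lemma~\ref{l3.4}), and your reduction to $h(u)=R_{1}(G^{-1}(1-u))$ followed by Potter-type bounds is precisely the standard way of cashing in that assertion --- it mirrors the exponent estimates the paper itself writes out for the analogous quantities $\rho_{n}(4),\rho_{n}(5)$ in (\ref{6.13})--(\ref{6.14}). Your diagnosis of the excluded case $\gamma=2$, where the power vanishes and the factor $\exp(\int b(t)t^{-1}dt)$ decides the limit, likewise matches the paper's Lemma~\ref{l6.1}(c).
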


\bigskip

\noindent a) Now, let $\ell $ $\rightarrow +\infty $. Using Lemma \ref{l3.6} and the fact
that $m^{1/2}\gamma _{n}\left( m\right) =o_{p}\left( 1\right) $ for $m=k$ or 
$\ m=\ell $, we get after routine considerations, 

\begin{equation}
\ell ^{1/2}\left( \widetilde{C}_{n}-C_{n}\right) /R_{1}\left( z_{n}\right)
e_{1}\left( \gamma \right) N_{n}\left( 2,\ell \right) +\rho _{n}\left(
3,1/2\right) e_{1}\left( \gamma \right) N_{n}\left( 2,k\right) +o_{p}\left(
1\right)  \label{5.1}
\end{equation}

\noindent which combined with Lemma \ref{l5.1} yields 

\begin{equation}
\ell ^{1/2}\left( \widetilde{C}_{n}-C_{n}\right) /R_{1}\left( z_{n}\right)
=e_{1}\left( \gamma \right) N_{n}\left( 2,\ell \right) +o_{p}\left( 1\right)
\label{5.2a}
\end{equation}

\noindent for $F\in D\left( \Lambda \right) \cup D\left( \psi _{\gamma }\right) $, $%
2<\gamma \leq +\infty ,$ and 

\begin{equation}
v_{n}^{\ast }\left( 8\right) \left( \widetilde{C}_{n}-C_{n}\right)
=e_{1}\left( \gamma \right) N_{n}\left( 2,\ell \right) +o_{p}\left( 1\right)
\label{5.2b}
\end{equation}

\noindent for $F\in D\left( \psi _{\gamma }\right) ,$ $o<\gamma <2$. This completes
the proofs of Theorems \ref{t5.1}.\\

\noindent b) Let $\ell $ be fixed.\\

\noindent For $F\in D\left( \psi _{\gamma }\right) ,$ one has by part (i) of Lemma \ref{l4.1}
and Formula (\ref{2.6}) 

\begin{equation*}
\left( \widetilde{z}_{n}-z_{n}\right) /R_{1}\left( z_{n}\right) \sim \left(
\gamma +1\right) \left( \frac{y_{o}-\widetilde{z}_{n}}{y_{o}-z}\right)
\end{equation*}

\begin{equation}
\sim
\left( \gamma +1\right) \left\{ \left( nU_{\ell +1,n}/\ell \right) ^{\pm
\varepsilon _{n}+1/\gamma }-1\right\} \left( 1+o_{p}\left( 1\right) \right) ,
\label{5.3}
\end{equation}

\noindent where $\varepsilon _{n}=\sup_{0\leq u\leq U_{\ell +1,n}}\left| b\left(
t\right) \right| \rightarrow _{p}0$.\\

\noindent This and Theorem C together prove Part a) i) of Theorem \ref{t5.1}.\\

\noindent For $F\in D\left( \phi \right) \cup D\left( \Lambda \right) ,$ $G\in D\left(
\Lambda \right) $. Using (\ref{1.4}) gives 

\begin{equation}
z_{n}-\widetilde{z}_{n}=s\left( U_{\ell +1,n}\right) -s\left( \ell /n\right)
+\int \left( \ell /n,U_{\ell +1,n}\right) s\left( t\right) t^{-1}dt,
\label{5.4}
\end{equation}

\noindent Set $\Theta _{n}=\left\{ s\left( t\right) /s\left( \ell /n\right) ,\ell
/n\leq t\leq U_{\ell +1,n}\text{ or }U_{\ell +1,n}\leq t\leq \ell /n\right\} 
$. It is easily shown from (\ref{1.3a}) that $\ \inf \Theta _{n}\rightarrow
_{p}1$ and $\ \sup \Theta _{n}\rightarrow _{p}1$. It follows that 

\begin{equation}
\noindent z_{n}-\widetilde{z}_{n}=\log E_{n}\left( \ell \right) +o_{p}\left( 1\right) .
\label{5.5}
\end{equation}

\noindent We do remark that $-\log E\left( \ell \right) $ is the Gumbel extremal law
for $\left( \ell +1\right) ^{th}$ maximum. Finally, 

\begin{equation}
\left( \widetilde{C}_{n}-C_{n}\right) /R_{1}\left( z_{n}\right) =-\log
E_{n}\left( \ell \right) +\rho _{n}\left( 2,k\right) +o_{p}\left( 1\right)
=-\log E_{n}\left( \ell \right) +o_{p}\left( 1\right) \text{.}  \label{5.6}
\end{equation}

\noindent This proves part a) ii) of Theorem \ref{t5.1}.\\

\noindent The results related to $T_{n}\left( 8\right) $ in Corollary \ref{c5.1} are
immediate. To prove those related to $T_{n}\left( 9\right) ,$ check that 
\begin{equation}
T_{n}\left( 9\right) -c_{n}\left( 9\right) =\left( \left( y_{o}-x_{n}\right)
\left( y_{o}-\widetilde{x}_{n}\right) \right) ^{-1}\left( \left(
y_{o}-x_{n}\right) \left( z_{n}-\widetilde{z}_{n}\right) -\left(
y_{o}-z_{n}\right) \left( x_{n}-\widetilde{x}_{n}\right) \right) ;
\label{5.7}
\end{equation}

\noindent and hence by Lemmas \ref{l3.2} and \ref{l4.1}, 
\begin{equation}
R_{1}\left( x_{n}\right) \left( T_{n}\left( 9\right) -c_{n}\left( 9\right)
\right) R_{1}\left( z_{n}\right) \sim \left( z_{n}\right) \sim \left( \gamma
+1\right) ^{-1}\left( \frac{z_{n}-\widetilde{z}_{n}}{R_{1}\left(
z_{n}\right) }-e_{1}\left( \gamma \right) N_{n}\left( 2,k\right)
k^{-1/2}\right) .  \label{5.8}
\end{equation}

\noindent And this proves the results related to $\ T_{n}\left( 9\right) $ in
Corrollary 5.1 following the lines just above.
\end{proof}

\noindent In the next section, we deal with all the ratios of statistics already
including the major element of the ECSFEXT which is $T_{n}( 1,k,\ell)$.

\newpage

\section{Limit law for ratios from the ECSFEXT} \label{sec6}

\subsection{Case of $T_{n}\left( 1,k,\ell \right) $.}

First, we obtain a general result

\begin{theorem} \label{t6.1}. Let $F\equiv \left( f,b\right) \in \Gamma $. If $\left( k,\ell \right) $
satisfies (\ref{2.4a}) or (\ref{2.4b}), then

$$
k^{1/2}\left( T_{n}\left( 1,k,\ell \right) -\mu \left( \widetilde{k}\right) 
/\tau \left( \widetilde{k}\right) ^{1/2}\right)
$$

$$
=\left(2\sqrt{\kappa \left( \gamma \right) }\right) ^{-1}\left( 2N_{n}\left(
0,k,\ell \right) -N_{n}\left( 3,k,\ell \right) \right) +o_{p}\left( 1\right)
$$

$$
 \rightarrow ^{d}N\left( 0,\sigma _{4}^{2}\left( \gamma \right) \right)
,0<\gamma \leq +\infty
$$

\noindent where $EN_{n}\left( 0,k,\ell \right) N_{n}\left( 3,k,\ell \right) \sim
3\left( \gamma +1\right) /\left( \gamma +3\right)$ so that 
$$
\sigma _{4}^{2}\left( \gamma \right) =\frac{2\gamma ^{3}+10\gamma ^{2}+32\gamma +24%
}{4\left( \gamma +1\right) \left( \gamma +3\right) \left( \gamma +4\right) }%
, 0<\gamma \leq +\infty.
$$
\end{theorem}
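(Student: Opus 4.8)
The plan is to treat $T_n(1,k,\ell)=T_n(2,k,\ell)\,A_n(1,k,\ell)^{-1/2}$ as a smooth function of the two statistics already analyzed and to linearize it by the delta method around the common random centers $\mu_n(\widetilde k)$ and $\tau(\widetilde k)$. Corollary \ref{c3.1} supplies $k^{1/2}\{T_n(2,k,\ell)-\mu_n(\widetilde k)\}/\mu_n(k,\ell)=N_n(0,k,\ell)+o_p(1)$, while Theorem \ref{t4.1} together with Remark \ref{r4.2} (which allows $\tau(\widetilde k)$ in place of $\tau(\widetilde k,\ell)$) gives $k^{1/2}\{A_n(1,k,\ell)-\tau(\widetilde k)\}/R_2(x_n)=N_n(3,k,\ell)+o_p(1)$. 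Since $N_n(0,k,\ell)$ and $N_n(3,k,\ell)$ are $O_p(1)$, both deviations are $O_p(k^{-1/2})$, exactly the regime in which a first-order expansion of $g(a,b)=ab^{-1/2}$ is legitimate.

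Writing $a=T_n(2,k,\ell)$, $b=A_n(1,k,\ell)$, $a_0=\mu_n(\widetilde k)$, $b_0=\tau(\widetilde k)$, I would use
\[ T_n(1,k,\ell)-a_0 b_0^{-1/2}=b_0^{-1/2}(a-a_0)-\tfrac12 a_0 b_0^{-3/2}(b-b_0)+R_n, \]
and control the quadratic remainder: since $a-a_0,b-b_0$ are $O_p(k^{-1/2})$ and $b_0\sim R_2(x_n)$ stays bounded away from $0$, one has $k^{1/2}R_n=o_p(1)$. Multiplying by $k^{1/2}$ and inserting the two representations turns the linear part into $b_0^{-1/2}\mu_n(k,\ell)N_n(0,k,\ell)-\tfrac12 a_0 b_0^{-3/2}R_2(x_n)N_n(3,k,\ell)+o_p(1)$. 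The decisive simplification is that both coefficients share one limit: using $\mu_n(k,\ell)\sim R_1(x_n)$, $a_0\sim R_1(x_n)$, $b_0\sim R_2(x_n)$ (Lemmas \ref{l3.2}, \ref{l4.2} and Corollary \ref{c3.1}) and then Lemma \ref{l4.1}, one finds $R_1(x_n)/R_2(x_n)^{1/2}\to\kappa(\gamma)^{-1/2}$. Hence the whole expression collapses to $\kappa(\gamma)^{-1/2}(N_n(0,k,\ell)-\tfrac12 N_n(3,k,\ell))+o_p(1)=(2\sqrt{\kappa(\gamma)})^{-1}(2N_n(0,k,\ell)-N_n(3,k,\ell))+o_p(1)$.

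Gaussianity of the limit is immediate from Theorem \ref{theo3}, which says $(N_n(0,k,\ell),N_n(3,k,\ell))$ is jointly Gaussian; so the limit variance is $\sigma_4^2(\gamma)=\tfrac1{4\kappa(\gamma)}\{4\,\mathrm{Var}\,N_n(0)-4\,\mathrm{Cov}(N_n(0),N_n(3))+\mathrm{Var}\,N_n(3)\}$. Here $\mathrm{Var}\,N_n(0)\to\sigma_0^2(\gamma)=2(\gamma+1)/(\gamma+2)$ by Lemma \ref{l3.5} and $\mathrm{Var}\,N_n(3)\to\sigma_2^2(\gamma)=6(\gamma+1)(\gamma+2)/\{(\gamma+3)(\gamma+4)\}$ by Lemma \ref{l4.3}. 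Substituting the claimed covariance $3(\gamma+1)/(\gamma+3)$ and simplifying yields $\sigma_4^2(\gamma)=(\gamma^2+4\gamma+12)/\{2(\gamma+3)(\gamma+4)\}$, which is the stated $(2\gamma^3+10\gamma^2+32\gamma+24)/\{4(\gamma+1)(\gamma+3)(\gamma+4)\}$ after multiplying numerator and denominator by $2(\gamma+1)$.

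The main obstacle is the covariance $\mathrm{Cov}(N_n(0,k,\ell),N_n(3,k,\ell))$, the only genuinely new computation. I would first collapse the double integral defining $N_n(3,k,\ell)$ by Fubini into $\{(n/k)^{1/2}/R_2(x_n)\}\int_{x_n}^{z_n}(t-x_n)B_n(1-G(t))\,dt$, so the covariance becomes a double integral of the Brownian-bridge kernel $K(s,t)=\min(1-G(s),1-G(t))-(1-G(s))(1-G(t))$ against the weight $(t-x_n)$. Writing $\xi=1-G$ and using that $K(s,t)=\xi(t)G(s)$ for $s<t$ (symmetrically for $s>t$), together with the tail facts $G\to1$ and $\xi(x_n)\sim k/n$ (Lemma \ref{l3.2}), the leading term becomes $\tfrac32\int_{x_n}^{z_n}(t-x_n)^2\xi(t)\,dt=3\,\xi(x_n)R_3(x_n,z_n)$, the last equality again by Fubini on the iterated integral defining $R_3$. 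Dividing by $R_1(x_n)R_2(x_n)$, using $(n/k)\xi(x_n)\to1$, $R_3(x_n,z_n)\sim R_3(x_n)$ (Lemma \ref{l4.2}), and finally Lemma \ref{l4.1} to get $R_3(x_n)/\{R_1(x_n)R_2(x_n)\}\to(\gamma+1)/(\gamma+3)$, produces the factor $3(\gamma+1)/(\gamma+3)$. The delicate points are justifying the uniform tail approximation $G\approx1$ and the passages $R_p(x_n,z_n)\sim R_p(x_n)$, both of which are furnished by Lemmas \ref{l3.2}, \ref{l4.1} and \ref{l4.2}.
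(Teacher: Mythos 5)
Your proposal is correct and follows essentially the same route as the paper: the paper's own proof of Theorem \ref{t6.1} is precisely this delta-method computation. It expands $T_{n}(2,k,\ell)$ and $A_{n}(1,k,\ell)$ around the random centers $\mu(\widetilde{k})$ and $\tau(\widetilde{k})$ via Theorems \ref{t3.1} and \ref{t4.1} (Formulas (\ref{6.1})--(\ref{6.5})), linearizes the ratio using $\tau(k)\sim\kappa(\gamma)\mu(k,\ell)^{2}$, and then computes $s_{n}(0,3)=\mathbb{E}N_{n}(0,k,\ell)N_{n}(3,k,\ell)$. The only real difference is cosmetic and lies in that covariance computation: you first collapse the double integral defining $N_{n}(3,k,\ell)$ by Fubini into $\int_{x_{n}}^{z_{n}}(t-x_{n})B_{n}(1-G(t))\,dt$ and integrate the bridge kernel against the weight $(t-x_{n})$, whereas the paper keeps the triple integral (\ref{6.7}) and splits the integration region into $s\leq y$, $s>y$ and then $s\leq t$, $s>t$ ((\ref{6.8})--(\ref{6.9})). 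Both reductions end at $3R_{3}(x_{n})/\{R_{1}(x_{n})R_{2}(x_{n})\}\rightarrow 3(\gamma+1)/(\gamma+3)$ via Lemmas \ref{l3.2}, \ref{l4.1} and \ref{l4.2}, and your variance algebra reproduces the stated $\sigma_{4}^{2}(\gamma)$ exactly.

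One justification you give is misstated, although the step it supports is true. You claim that $a-a_{0}$ and $b-b_{0}$ are $O_{p}(k^{-1/2})$ and that $b_{0}\sim R_{2}(x_{n})$ ``stays bounded away from $0$''. Neither holds in general: the correct orders are $O_{p}(k^{-1/2}\mu_{n}(k,\ell))$ and $O_{p}(k^{-1/2}R_{2}(x_{n}))$, and $R_{2}(x_{n})\rightarrow 0$ whenever $F\in D(\psi_{\gamma})$ (since $x_{n}\rightarrow y_{0}<\infty$), and possibly also in $D(\Lambda)$ (e.g.\ the normal $df$, whose auxiliary function vanishes at the endpoint). The remainder control nevertheless goes through because the relevant quantities are the relative errors $u=(a-a_{0})/a_{0}$ and $v=(b-b_{0})/b_{0}$, which are genuinely $O_{p}(k^{-1/2})$, while the overall scale $a_{0}b_{0}^{-1/2}\rightarrow\kappa(\gamma)^{-1/2}$ is bounded; writing $g(a,b)=a_{0}b_{0}^{-1/2}(1+u)(1+v)^{-1/2}$ gives $k^{1/2}R_{n}=O_{p}(k^{-1/2})$ uniformly over $\Gamma$. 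Your subsequent display already uses the correctly scaled coefficients $b_{0}^{-1/2}\mu_{n}(k,\ell)$ and $\tfrac{1}{2}a_{0}b_{0}^{-3/2}R_{2}(x_{n})$, so the repair is purely in the wording of the remainder bound, not in the architecture of the proof.
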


\bigskip 

\begin{theorem} \label{t6.2}. Let $F\equiv \left( f,b\right) \in \Gamma $ and $\left( k,\ell \right) $
satisfies (\ref{2.4a}) or (\ref{2.4b}). Under the regularity conditions, we have

$v_{n}\left( 1\right) \left( T_{n}\left( 1,k,\ell \right) -c_{n}\left(
1\right) \right) =\left( 2\sqrt{\kappa \left( \gamma \right) }\right)
^{-1}\left( 2N_{n}\left( 1,k,\ell \right) -N_{n}\left( 4,k,\ell \right)
\right) +o_{p}\left( 1\right) \rightarrow ^{d}N\left( 0,\sigma _{5}^{2}\left( \gamma \right) \right) ,$ $%
0<\gamma \leq +\infty $.\\

\bigskip

\noindent where $v\left( 1\right) =k^{1/2},$ $c\left( 1\right) =\mu \left( k\right) /%
\sqrt{\tau \left( k\right) }$ and 
$$
\sigma ^{2}\left( \gamma \right) =\frac{%
\gamma ^{3}+\gamma ^{2}+2\gamma }{4\left( \gamma +1\right) \left( \gamma
+3\right) \left( \gamma +4\right) }, \text{ } 0<\gamma \leq +\infty.
$$

\end{theorem}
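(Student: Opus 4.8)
The plan is to exploit the product representation $T_n(1,k,\ell) = T_n(2,k,\ell)\,A_n(1,k,\ell)^{-1/2}$ and to linearise the smooth map $g(u,v) = u v^{-1/2}$ around the \emph{non-random} centring pair $(u,v) = (\mu_n(k), \tau(k))$. This is the same device that underlies Theorem \ref{t6.1}, the only difference being that there one centres at the random pair $(\mu_n(\widetilde k), \tau(\widetilde k))$ and reads off $N_n(0,k,\ell)$ and $N_n(3,k,\ell)$, whereas the non-random centring here forces the appearance of $N_n(1,k,\ell)$ and $N_n(4,k,\ell)$, i.e.\ of the extra fluctuation $N_n(2,k)$ produced by the passage $\widetilde k \to k$.

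First I would record the two scalar expansions already available. By Theorem \ref{t3.2}, in the form of Corollary \ref{c3.3} valid under the regularity conditions, $T_n^*(2,k,\ell) = R_1(x_n)^{-1}k^{1/2}(T_n(2,k,\ell) - \mu_n(k)) = N_n(1,k,\ell) + o_p(1)$, the bias term $e_2(\gamma)k^{1/2}\gamma_n(k) = e_2(\gamma)N_n(-\infty)$ being $o_p(1)$ precisely because the regularity conditions force $N_n(-\infty) \to_p 0$. Likewise, by Theorem \ref{t4.2} and the closing remark of Section \ref{sec4}, $A_n^*(1,k,\ell) = R_2(x_n)^{-1}k^{1/2}(A_n(1,k,\ell) - \tau(k)) = N_n(4,k,\ell) + o_p(1)$, the quadratic block $C_n$ and the $e_4(\gamma)N_n(-\infty)$ contribution all being $o_p(1)$ under the same hypotheses. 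Writing $N_n(1,k,\ell) = N_n(0,k,\ell) + e_1(\gamma)N_n(2,k)$ and, parallel to (4.33)--(4.35), $N_n(4,k,\ell) = N_n(3,k,\ell) + e_3(\gamma)N_n(2,k)$ (note $\kappa(\gamma)^{-1}e_1(\gamma) = e_3(\gamma)$) makes the joint Gaussian structure explicit.

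Next I would carry out the first-order Taylor expansion: with $\partial_u g = v^{-1/2}$ and $\partial_v g = -\tfrac12 u v^{-3/2}$ evaluated at $(\mu_n(k), \tau(k))$,
\[
k^{1/2}\bigl(T_n(1,k,\ell) - c_n(1)\bigr) = \tau(k)^{-1/2}R_1(x_n)\,T_n^*(2,k,\ell) - \tfrac12\,\mu_n(k)\,\tau(k)^{-3/2}R_2(x_n)\,A_n^*(1,k,\ell) + \rho_n ,
\]
where $\rho_n$ collects the Taylor remainder. Invoking $\mu_n(k) \sim R_1(x_n)$ and $\tau(k) \sim R_2(x_n) \sim \kappa(\gamma)R_1(x_n)^2$ from Lemma \ref{l4.1} shows that the two coefficients converge to $\kappa(\gamma)^{-1/2}$ and $\tfrac12\kappa(\gamma)^{-1/2}$ respectively, while the second-order remainder is $\rho_n = O_p\bigl(\mu_n(k)\tau(k)^{-5/2}R_2(x_n)^2 k^{-1/2}\bigr) = O_p(k^{-1/2}) = o_p(1)$. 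Combining with the two scalar expansions yields
\[
v_n(1)\bigl(T_n(1,k,\ell) - c_n(1)\bigr) = \bigl(2\sqrt{\kappa(\gamma)}\bigr)^{-1}\bigl(2N_n(1,k,\ell) - N_n(4,k,\ell)\bigr) + o_p(1),
\]
which is the asserted representation.

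Finally, since $N_n(1,k,\ell)$ and $N_n(4,k,\ell)$ are jointly Gaussian by Theorem \ref{theo3}, the right-hand side is asymptotically normal with mean zero and limiting variance $\sigma_5^2(\gamma) = \bigl(4\kappa(\gamma)\bigr)^{-1}\bigl\{4\,\mathrm{Var}\,N_n(1) - 4\,\mathrm{Cov}(N_n(1),N_n(4)) + \mathrm{Var}\,N_n(4)\bigr\}$. The ingredients $\mathrm{Var}\,N_n(1) \to \sigma_1^2(\gamma)$ and $\mathrm{Var}\,N_n(4)$ come from (3.34), (4.3) and (4.37); the cross term expands, via $E(B(s)B(t)) = \min(s,t) - st$, into $\mathrm{Cov}(N_n(0),N_n(3)) \to 3(\gamma+1)/(\gamma+3)$ (already obtained in Theorem \ref{t6.1}) together with $\mathrm{Cov}(N_n(0),N_n(2,k)) \to -1$ and $\mathrm{Cov}(N_n(3),N_n(2,k)) \to -1$ from (3.43) and (4.36), and $\mathrm{Var}\,N_n(2,k) \to 1$. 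The main obstacle is exactly this last step: assembling the four covariance blocks and simplifying the resulting rational function of $\gamma$ to the closed form $\sigma_5^2(\gamma) = (\gamma^3+\gamma^2+2\gamma)/\{4(\gamma+1)(\gamma+3)(\gamma+4)\}$, while verifying that the Taylor remainder and every $o_p(1)$ error are controlled uniformly across $D(\psi_\gamma)$, $D(\phi)$ and $D(\Lambda)$, the case $\gamma = +\infty$ being recovered, as throughout the paper, by letting $\gamma \to \infty$.
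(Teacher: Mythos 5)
Your proposal is correct and takes essentially the same route as the paper: the paper proves Theorem \ref{t6.2} by repeating the linearization of $T_{n}(2,k,\ell)A_{n}(1,k,\ell)^{-1/2}$ used for Theorem \ref{t6.1}, but fed with the non-random-centring expansions of Theorems \ref{t3.2} and \ref{t4.2}, and then computes $\sigma_{5}^{2}(\gamma)$ via the identity $2N_{n}(1,k,\ell)-N_{n}(4,k,\ell)=2N_{n}(0,k,\ell)-N_{n}(3,k,\ell)+N_{n}(2,k)$ in (\ref{6.11}) together with (\ref{3.43}) and (\ref{6.10}), which is exactly your decomposition since $2e_{1}(\gamma)-e_{3}(\gamma)=1$. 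The final covariance algebra that you defer as the ``main obstacle'' is left equally implicit in the paper's own proof.
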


\begin{proof}

We need only to prove Theorem \ref{t6.1} as Corollary of Theorems \ref{t3.1} and \ref{t4.1}
Theorems \ref{t6.2} follows from Theorems \ref{t3.2} and \ref{t4.2} by the very same arguments.
By Theorem \ref{t3.1}, 
\begin{equation}
T_{n}\left( 2,k,\ell \right) =\mu \left( \widetilde{k}\right) +\mu \left(
k\right) k^{1/2}N_{n}\left( 0,k,\ell \right) +o_{p}\left( k^{-1/2}\mu \left(
k\right) \right),  \label{6.1}
\end{equation}
so that 
\begin{equation}
T_{n}\left( 2,k,\ell \right) ^{2}=\mu \left( \widetilde{k}\right)
^{2}+2k^{-1/2}N_{n}\left( 0,k,\ell \right) +o_{p}\left( k^{-1/2}\mu \left(
k\right) ^{2}\right).  \label{6.3}
\end{equation}

\noindent Since $\mu \left( \widetilde{k}\right) \sim \mu \left( k\right) $ in
probability, one has 
\begin{equation}
T_{n}\left( 2,k,\ell \right) =\mu \left( \widetilde{k}\right) \left(
1+o_{p}\left( 1\right) \right).
\end{equation}

\noindent Furthermore, by Theorem \ref{t4.1}, 
\begin{equation}
A_{n}\left( 1,k,\ell \right) =\tau \left( \widetilde{k}\right) +\tau \left(
k\right) k^{-1/2}N_{n}\left( 3,k,\ell \right) +o_{p}\left( k^{-1/2}\tau
\left( k\right) \right)  \label{6.4}
\end{equation}
so that 
\begin{equation}
A_{n}\left( 1,k,\ell \right) =\tau \left( \widetilde{k}\right) \left(
1+o_{p}\left( 1\right) \right) .  \label{6.5}
\end{equation}

\noindent Now a straightforward calculation based on (\ref{6.1})-(\ref{6.5}) and on
the fact that $\tau \left( k\right) \sim \kappa \left( \gamma \right) \mu
\left( k,\ell \right) ^{2}$ (see Lemma \ref{l3.2} and \ref{l4.1}) yields 

\begin{equation*}
k^{1/2}\left( T_{n}\left( 1,k,\ell \right) -\mu \left( \widetilde{k}\right)
/\tau \left( \widetilde{k}\right) ^{1/2}\right) =k^{1/2}\left( T_{n}\left(
1,k,\ell \right) ^{2}-\mu \left( \widetilde{k}\right) ^{2}/\tau \left( 
\widetilde{k}\right) \right)  
\end{equation*}

\begin{equation}
k^{1/2}\left( T_{n}\left( 1,k,\ell \right) -\mu \left( \widetilde{k}\right)
/\tau \left( \widetilde{k}\right) ^{1/2}\right) = \left( 2\sqrt{%
\kappa \left( \gamma \right) }\right) ^{-1}\left( 2N_{n}\left( 0,k,\ell
\right) -N_{n}\left( 3,k,\ell \right) \right) +o_{p}\left( 1\right). \label{6.6}
\end{equation}

\noindent This partly proves Theorem \ref{t6.1}. It remains to compute $\mathbb{E}N_{n}\left( 0,k,\ell \right) N_{n}\left( 3,k,\ell \right) =s_{n}\left(0,3\right) $ which is 

\begin{equation}
s_{n}\left( 0,3\right) =\left( \left( n/k\right)
\int_{x_{n}}^{z_{n}}ds\int_{x_{n}}^{z_{n}}dy\int_{y}^{z_{n}}h\left(
s,t\right) \text{ }dt\right) /R_{1}\left( x_{n}\right) R_{2}\left(
x_{n}\right)  \label{6.7}
\end{equation}

\begin{equation*}
= \left( n/k\right) \left( \int_{x_{n}}^{z_{n}}dy\int_{y}^{z_{n}}dt\int_{x_{n}}^{z_{n}}h \left(s,t\right) \text{ } ds\right) /\left( R_{1}\left( x_{n}\right) R_{2}\left(x_{n}\right) \right),
\end{equation*}

\noindent where $h\left( s,t\right) $ is defined in (\ref{4.28}). Cutting the space
integration into $s\leq y$ and $s>y$ and using the first (resp. the second)
expression of $s_{n}\left( 0,3\right) $ for $s\leq y$ (resp. $s>y$), we obtain 

\begin{equation}
s_{n}\left( 0,3\right) \sim \left\{ \left( \left( n/k\right)
\int_{x_{n}}^{z_{n}}\int_{s}^{z_{n}}\int_{y}^{z_{n}}1-G\left( t\right) \text{
}dt\text{ }dy\text{ }ds\right) /R_{1}\left( x_{n}\right) R_{2}\left(
x_{n}\right) \right\}.  \label{6.8}
\end{equation}

\noindent Considering now the case $s\leq t$ and $s>t$ in the second term of (\ref{6.8})
gives 
\begin{equation}
s_{n}\left( 0,3\right) =\left( R_{3}\left( x_{n}\right) \left( 1+o_{p}\left(
1\right) \right) +2R_{3}\left( x_{n}\right) \left( 1+o_{p}\left( 1\right)
\right) /R_{1}\left( x_{n}\right) R_{2}\left( x_{n}\right) \right) ,
\label{6.9}
\end{equation}

\noindent where we used Lemma \ref{l3.2}. Now Lemmas \ref{l4.1} and \ref{l4.2} imply 

\begin{equation}
s_{n}\left( 0,3\right) \sim 3\left( \gamma +1\right) \left( \gamma +3\right)
^{-3},\text{ }0<\gamma \leq +\infty .  \label{6.10}
\end{equation}

\noindent from which we derive $\sigma _{4}\left( \gamma \right) $. Theorem \ref{t6.1} is now
entirely proved. Theorem \ref{t6.2} is proved by the same arguments but we must say
a few words on $\sigma _{5}\left( \gamma \right) $. One has 
\begin{equation}
2N_{n}\left( 1,k,\ell \right) -N_{n}\left( 4,k,\ell \right) =2N_{n}\left(
0,k,\ell \right) -N_{n}\left( 3,k,\ell \right) +N_{n}\left( 2,k\right) ,
\label{6.11}
\end{equation}

\noindent which, together with (\ref{3.43}) and (\ref{6.10}), permits to compute $%
\sigma _{5}\left( \gamma \right) $.
\end{proof}

\bigskip

\subsection{Case of $T_{n}\left( 3,k,\ell \right)$}.

We already noticed in Theorem \ref{t5.1} that when $\widetilde{z}_{n}-\widetilde{x}%
_{n}$ intervenes, the contribution of $\widetilde{z}_{n}$ (normal or
extremal) dominates that of $\widetilde{x}_{n}$ (normal).\\

\noindent Here again, this is the case except when $F\in D\left( \psi _{\gamma
}\right) $ where each of $\widetilde{z}_{n}$ and $\widetilde{x}_{n}$ may
get the better of the other with possibilities of a mixture of both. We
beginn with.

\bigskip

\begin{lemma} \label{l6.1}. 
Let $F\in \Gamma $. Put $\rho _{n}\left( 4\right) =k^{-1/2}\left(
z_{n}-x_{n}\right) R_{1}\left( z_{n},G\right) ^{-1},$ $\rho _{n}\left(
5\right) =\ell ^{1/2}\rho _{n}\left( 4\right)$. The results below hold for $\ell$ concerning $\rho _{n}(4)$ and 
$\ell \rightarrow +\infty$, and $\ell /k\rightarrow0$ concerning $\rho _{n}(5)$.

\noindent 1) If $F\in D\left( \Lambda \right) \cup D\left( \phi \right) $, then $%
\left( \rho _{n}\left( 4\right) ,\text{ }\rho _{n}\left( 5\right) \right)
\rightarrow \left( 0,0\right).$\\

\noindent 2) Let $F\in D\left( \psi _{\gamma }\right) ,$ $\gamma >0.$\\

\noindent a) If $\gamma <2,$ then $\left( \rho _{n}\left( 4\right) ,\rho _{n}\left(
5\right) \right) \rightarrow \left( +\infty ,+\infty \right)$.\\

\noindent b) If $\gamma >2,$ then $\left( \rho _{n}\left( 4\right) ,\text{ }\rho
_{n}\left( 5\right) \right) \rightarrow \left( 0,0\right)$.\\

\noindent c) If $\gamma =2,$ both limits and any other one is possible.\\
\end{lemma}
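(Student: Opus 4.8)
The plan is to reduce every quantity to the asymptotics of the tail quantile increments, for which the representations (\ref{2.6}) and (\ref{1.4}) together with Lemmas \ref{l3.2}, \ref{l3.4} and \ref{l4.1} give precise control. Throughout I would write $u_k=k/n$, $u_\ell=\ell/n$, so that $x_n=G^{-1}(1-u_k)$, $z_n=G^{-1}(1-u_\ell)$, and note that $u_\ell/u_k=\ell/k\to 0$ while $x_n\le z_n\to y_0$. The normaliser $R_1(z_n)$ is the natural auxiliary function at $z_n$, and the whole argument splits according to whether $R_1(G^{-1}(1-u))$ is regularly or slowly varying at zero, i.e. exactly along the dichotomy of Lemma \ref{l3.4}.

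First I would treat $F\in D(\psi_\gamma)$. By (\ref{2.6}), $y_0-x_n\sim c\,u_k^{1/\gamma}L(u_k)$ and $y_0-z_n\sim c\,u_\ell^{1/\gamma}L(u_\ell)$, where $L(u)=\exp(\int_u^1 b(t)t^{-1}dt)$ is SVZ and $f\to0$ has been absorbed. Since $u_\ell/u_k\to0$ and $L$ is subpolynomial, $(y_0-z_n)/(y_0-x_n)\to0$, so $z_n-x_n\sim y_0-x_n$; combining this with $R_1(z_n)\sim(y_0-z_n)/(\gamma+1)$ from Lemma \ref{l4.1}(i) gives
\begin{equation*}
\rho_n(4)\sim(\gamma+1)k^{-1/2}\frac{y_0-x_n}{y_0-z_n}\sim(\gamma+1)\,k^{-1/2+1/\gamma}\ell^{-1/\gamma}\,\Lambda_n,\qquad \Lambda_n:=L(u_k)/L(u_\ell),
\end{equation*}
and $\rho_n(5)=\ell^{1/2}\rho_n(4)\sim(\gamma+1)(\ell/k)^{1/2-1/\gamma}\Lambda_n$. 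For $\ell$ fixed one reads off the sign of $-\tfrac12+\tfrac1\gamma$: it is positive for $\gamma<2$ (so $\rho_n(4)\to+\infty$) and negative for $\gamma>2$ (so $\rho_n(4)\to0$); likewise $(\ell/k)^{1/2-1/\gamma}$ with $\ell/k\to0$ forces $\rho_n(5)\to+\infty$ for $\gamma<2$ and $\rho_n(5)\to0$ for $\gamma>2$. The slowly varying factor $\Lambda_n$ cannot overturn a genuine power: by the Potter bounds already used in the proofs of (\ref{4.13}) and (\ref{3.31}), $(k/\ell)^{-\varepsilon}\le\Lambda_n\le(k/\ell)^{\varepsilon}$ eventually for every $\varepsilon>0$, and one chooses $\varepsilon$ smaller than $|{-\tfrac12+\tfrac1\gamma}|$. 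This settles 2a) and 2b).

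Next, for $F\in D(\phi)\cup D(\Lambda)$ one has $G\in D(\Lambda)$, so $R_1(z_n)\sim s(u_\ell)$ is SVZ by (\ref{3.15}) and Lemma \ref{l3.4}(i). The representation (\ref{1.4}) gives $z_n-x_n=s(u_k)-s(u_\ell)+\int_{u_\ell}^{u_k}s(t)t^{-1}dt$, whence, dividing by $R_1(z_n)\sim s(u_\ell)$ and invoking the Potter bound $s(t)/s(u_\ell)\le C(t/u_\ell)^{\varepsilon}$, one gets $(z_n-x_n)/R_1(z_n)=O((k/\ell)^{\varepsilon})$ for every $\varepsilon>0$, the genuine leading order being the logarithmic term $\log(k/\ell)$ already seen in (\ref{4.27}). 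Therefore $\rho_n(4)=O(k^{-1/2+\varepsilon})\to0$ (take $\varepsilon<\tfrac12$, $\ell$ fixed) and $\rho_n(5)=(\ell/k)^{1/2}\,O(\log(k/\ell))\to0$ since any power beats the logarithm; this is part 1).

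Finally, the borderline $\gamma=2$ is where the two competing powers cancel exactly: the displays of the second paragraph collapse to $\rho_n(4)\sim3\ell^{-1/2}\Lambda_n$ and $\rho_n(5)\sim3\Lambda_n$, so the limit is decided entirely by the SVZ ratio $\Lambda_n=L(u_k)/L(u_\ell)$. Taking $b\equiv0$ gives $\Lambda_n\to$ const and a finite nonzero limit; taking $b(u)=\tfrac12(\log(1/u))^{-1/2}$ (resp. its negative), which still tends to $0$ and is therefore admissible, gives $L(u)=\exp(\pm\sqrt{\log(1/u)})$ and $\Lambda_n=\exp(\pm(\sqrt{\log(n/k)}-\sqrt{\log(n/\ell)}))\to0$ (resp. $+\infty$), so every limit is realisable, proving 2c). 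The single delicate point throughout is precisely the uniform control of the slowly varying contributions over the \emph{expanding} window $[u_\ell,u_k]$ with $k/\ell\to\infty$, where the elementary SVZ relation at a fixed ratio no longer applies; this is handled exactly as in (\ref{4.13}), (\ref{4.27}) and (\ref{3.31}) by Potter-type bounds, and it is the only step requiring genuine care.
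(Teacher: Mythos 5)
Your proof is correct and follows essentially the same route as the paper's: both reduce the problem to the asymptotics of $\left( z_{n}-x_{n}\right) /R_{1}\left( z_{n}\right) $ via the representations (\ref{2.6}) and (\ref{1.4}) together with Lemma \ref{l4.1}, obtaining $\left( z_{n}-x_{n}\right) /R_{1}\left( z_{n}\right) \sim \left( \gamma +1\right) \left( k/\ell \right) ^{1/\gamma }\Lambda _{n}$ in the Weibull case and an $O\left( \left( k/\ell \right) ^{\varepsilon }\right) $ bound in the Gumbel/Fr\'{e}chet case, then kill or release the power $k^{-1/2}$ (resp. $\left( \ell /k\right) ^{1/2}$) using Potter-type control of the slowly varying factor over the expanding window $\left[ \ell /n,k/n\right] $, and finally exhibit explicit functions $b$ at $\gamma =2$ exactly as the paper does with $b\left( t\right) =t$ and $b\left( t\right) =\pm 1/\log \log \left( 1/t\right) $ (your $\Lambda _{n}=L\left( k/n\right) /L\left( \ell /n\right) $ even has the correct sign, which (\ref{6.13}) as printed does not). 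One small repair is needed in part c): for $b\left( u\right) =\pm \frac{1}{2}\left( \log \left( 1/u\right) \right) ^{-1/2}$ the claimed limits $\Lambda _{n}\rightarrow 0$ (resp. $+\infty $) hold only when $\log \left( k/\ell \right) /\sqrt{\log n}\rightarrow +\infty $, so you must also fix the sequence, e.g. $k\sim n^{\upsilon }$, $0<\upsilon <1$, as the paper explicitly does --- under (\ref{2.4a}) alone (say $k=\log n$) your $\Lambda _{n}\rightarrow 1$ and the examples no longer produce the limits $0$ and $+\infty $.
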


\begin{proof} of Lemma \ref{l6.1}.

Let $F\in D\left( \Lambda \right) \cup D\left( \phi \right) $. Thus $G\in
D\left( \Lambda \right) $ and by (\ref{4.27}), for any $\varepsilon ,$ $%
0<\varepsilon$. Then for large values of  $n$, 
\begin{equation}
\left( z_{n}-x_{n}\right) /R_{1}\left( z_{n}\right) \leq 3\left\{ s\left(
k/n\right) \text{ }s\left( \ell /n\right) \right\} +3\left\{ \left(
k/n\right) ^{\varepsilon }\log \left( k/\ell \right) \right\} +1.
\label{6.12}
\end{equation}

\noindent This and (\ref{4.21}) together ensure that $\rho _{n}\left( 4\right)
\rightarrow 0$ ($\ell $ fixed) and $\rho _{n}\left( 5\right) \rightarrow 0$ $%
\left( \ell \rightarrow +\infty \right) $. Now let $F\in D\left( \psi
_{\gamma }\right) ,$ i.e., $G\in D\left( \psi _{\gamma }\right) $. By Lemma
\ref{l4.1}, 
\begin{equation}
\left( z_{n}-x_{n}\right) /R_{1}\left( z_{n}\right) \sim \left( \gamma
+1\right) \left( k/\ell \right) ^{1/\gamma }\exp \left( \int_{\ell
/n}^{k/n}b\left( t\right) \text{ }t^{-1}dt\right) .  \label{6.13}
\end{equation}

\noindent a) Let $\gamma >2.$ For any $\varepsilon ,$ $0<\varepsilon <\min \left(
1/\gamma ,1/2-1/\gamma \right) $, one has for large values of $n$, 
\begin{equation}
\rho _{n}\left( 4\right) \leq 2\left( \gamma +1\right) \ell ^{\left(
\varepsilon -1/\gamma \right) }k^{-\left( 1/2-\varepsilon -1/\gamma \right)
};\text{ }\rho _{n}\left( 5\right) \leq 2\left( \gamma +1\right) \left( \ell
/k\right) ^{1/2-\varepsilon -1/\gamma }  \label{6.14}
\end{equation}
which both imply that $\left( \rho _{n}\left( 4\right) ,\rho _{n}\left(
5\right) \right) \rightarrow \left( 0,0\right)$.\\

\noindent b) Let $\gamma <2$. For $\varepsilon ,$ $0<\varepsilon <\min \left( 1/\gamma
,-1/2+1/\gamma \right) $, one has as $n$ is large, which both imply that $%
\left( \rho _{n}\left( 4\right) ,\rho _{n}\left( 5\right) \right)
)\rightarrow \left( +\infty ,+\infty \right)$.\\

\noindent c) Let $\gamma =2$.\\

$\alpha $) If $b\left( t\right) =t,$ $\rho _{n}\left( 4\right) \rightarrow
\left( \gamma +1\right) \ell ^{-1/2},\rho _{n}\left( 5\right) \rightarrow
\left( \gamma +1\right)$.\\

$\beta $) If $b\left( t\right) =1/\log \log \left( 1/t\right) ,k\sim
n^{\upsilon },$ $0<\upsilon <1,$ $\left( \rho _{n}\left( 4\right) ,\rho
_{n}\left( 5\right) \right) \rightarrow \left( 0,0\right)$.\\

$\gamma $) If $b\left( t\right) =-1/\log \log \left( 1/t\right) ,k\sim
n^{\upsilon },$ $0<\upsilon <1,$ $\left( \rho _{n}\left( 4\right) ,\rho
_{n}\left( 5\right) \right) \rightarrow \left( +\infty ,+\infty \right)$.\\

\noindent Points ($\alpha $), ($\beta $) and ($\gamma $) prove that any limit is possible when $\gamma =2$.
\end{proof}

\bigskip

\noindent Here are the results for $T_{n}\left( 3,k,\ell ,\upsilon \right) $.

\begin{theorem} \label{t6.3}. Let $F\equiv \left( f,b\right) \in \Gamma $, and $\left( k,\ell \right) 
$ satisfies (\ref{2.4a}) or (\ref{2.4b}). Suppose that the regularity conditions holds.

\noindent 1) Let $F\in D\left( \Lambda \right) \cup D\left( \phi \right) .$

\noindent a) If $\ell $ is fixed, then

$$
R_{1}\left( z_{n}\right) ^{-1}R_{1}\left( x_{n}\right) \left( n^{\upsilon
}T_{n}\left( 3,k,\upsilon \right) -\frac{z_{n}-x_{n}}{\mu \left( k\right) }%
\right)
$$

$$
=-\frac{z_{n}-\widetilde{z}_{n}}{R_{1}\left( z_{n}\right) }%
+o_{p}\left( 1\right) \rightarrow ^{d}-\log E\left( \ell \right).
$$

\noindent b) If $\ell \rightarrow +\infty $, then

$$
v_{n}\left( 3\right) \left( n^{\upsilon }T_{n}\left( 3,k,\ell ,\upsilon
\right) -c_{n}\left( 3\right) \right) -\frac{\ell ^{1/2}\left( z_{n}-%
\widetilde{z}_{n}\right) }{R_{1}\left( z_{n}\right) }+o_{p}\left( 1\right)
$$

$$
=-N_{n}\left( 2,\ell \right) +o_{p}\left( 1\right) \rightarrow N\left(
0,1\right),
$$

\noindent where $v_{n}\left( 3\right) =\ell ^{1/2}R_{1}\left( x_{n}\right)
/R_{1}\left( z_{n}\right)$, and  $c_{n}\left( 3\right) =\left(
z_{n}-x_{n}\right) /\mu \left( k\right)$.\\

\noindent 2) Let $F\in D\left( \psi _{\gamma }\right) $ with $\gamma >2$, then

$$
R_{1}\left( x_{n}\right) \left( n^{\upsilon }T_{n}\left( 3,k,\ell ,\upsilon
\right) -c_{n}\left( 3\right) \right) =e_{1}\left( \gamma \right)
N_{n}\left( 2,\ell \right) +o_{p}\left( 1\right)
$$ 

\noindent when $\ell \rightarrow +\infty $.\\

\noindent 3) Let $F\in D\left( \psi _{\gamma }\right) $ with $0<\gamma <2$, then for $%
v_{n}^{\ast }\left( 3\right) =k^{1/2},$

$$
v_{n}^{\ast }\left( 3\right) \left( nT_{n}^{\upsilon }\left( 3,k,\ell
,\upsilon \right) -c_{n}\left( 3\right) \right) =-\left( \gamma +1\right)
N_{n}\left( 0,k,\ell \right)
$$

$$
-N_{n}\left( 2,\ell \right) +o_{p}\left(
1\right) \rightarrow ^{d}N\left( 0,\left( \gamma +1\right) ^{2}\left(
5\gamma +8\right) /\left( \gamma +2\right) \right),
$$

\noindent in both cases where $\ell $ is fixed and $\ell $ tends to infinity.
\end{theorem}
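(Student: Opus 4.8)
The plan is to treat $T_{n}(3,k,\ell,\upsilon)$ exactly as $T_{n}(1,k,\ell)$ was handled in \thmref{t6.1}: as a ratio whose two ingredients have already been linearized. Reading off the centering constant $c_{n}(3)=(z_{n}-x_{n})/\mu(k)$, I identify $n^{\upsilon}T_{n}(3,k,\ell,\upsilon)$ with the ratio $\widetilde{C}_{n}/T_{n}(2,k,\ell)$ of the spacing $\widetilde{C}_{n}=\widetilde{z}_{n}-\widetilde{x}_{n}$ over the mean statistic. First I would expand the ratio about its deterministic center,
\begin{equation*}
n^{\upsilon}T_{n}(3,k,\ell,\upsilon)-c_{n}(3)=\frac{\widetilde{C}_{n}-C_{n}}{\mu(k)}-c_{n}(3)\,\frac{T_{n}(2,k,\ell)-\mu(k)}{\mu(k)}+o_{p}(\cdots),
\end{equation*}
using $T_{n}(2,k,\ell)\sim\mu(\widetilde{k})\sim\mu(k)$ in probability (Corollary~\ref{c3.1}) to justify the first-order step and discard the quadratic remainder.

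Next I would feed in the two known expansions. The numerator deviation $\widetilde{C}_{n}-C_{n}=(\widetilde{z}_{n}-z_{n})-(\widetilde{x}_{n}-x_{n})$ is governed by \thmref{t5.1}, which already separates the endpoint contribution $\widetilde{z}_{n}$ from $\widetilde{x}_{n}$ and records the crucial fact that $\widetilde{z}_{n}$ dominates except when $F\in D(\psi_{\gamma})$ with $0<\gamma<2$. The denominator factor splits as $T_{n}(2,k,\ell)-\mu(k)=[T_{n}(2,k,\ell)-\mu(\widetilde{k})]+[\mu(\widetilde{k})-\mu(k)]$; the first bracket equals $\mu(k)k^{-1/2}N_{n}(0,k,\ell)(1+o_{p}(1))$ by \thmref{t3.1}, while the random-centering gap $\mu(\widetilde{k})-\mu(k)\approx x_{n}-\widetilde{x}_{n}$ is handled through $\xi_{n}(k)=k^{1/2}(\widetilde{x}_{n}-x_{n})/R_{1}(x_{n})$ of \lemref{l3.6}, which under the regularity conditions reduces to $e_{1}(\gamma)N_{n}(2,k)$.

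The case split is then dictated entirely by \lemref{l6.1}, since $c_{n}(3)\sim\gamma+1$ for $F\in D(\psi_{\gamma})$ and the relative sizes of the surviving terms are measured by $\rho_{n}(4),\rho_{n}(5)$. When $\rho_{n}(4),\rho_{n}(5)\to 0$ --- that is, $F\in D(\Lambda)\cup D(\phi)$ or $F\in D(\psi_{\gamma})$ with $\gamma>2$ --- the $\widetilde{z}_{n}-z_{n}$ piece of the numerator swamps both the $\widetilde{x}_{n}$ piece and the $N_{n}(0,k,\ell)$ piece, so the conclusions of \thmref{t5.1} carry over verbatim: the extremal limit $-\log E(\ell)$ for fixed $\ell$, and the Gaussian limits $-N_{n}(2,\ell)$ (resp.\ $e_{1}(\gamma)N_{n}(2,\ell)$) for $\ell\to+\infty$. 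When $\rho_{n}(4),\rho_{n}(5)\to+\infty$, i.e.\ $F\in D(\psi_{\gamma})$ with $0<\gamma<2$, the $\widetilde{z}_{n}$ contribution is negligible and both the $\widetilde{x}_{n}$ term and the $N_{n}(0,k,\ell)$ term survive at the common scale $v_{n}^{\ast}(3)=k^{1/2}$; collecting them gives $(\gamma+1)\bigl(N_{n}(2,k)-N_{n}(0,k,\ell)\bigr)$.

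This last regime is the main obstacle, with two delicate points. First, one must track the random-centering gap $\mu(\widetilde{k})-\mu(k)$ carefully, because its $\xi_{n}(k)$ contribution partially cancels the $\xi_{n}(k)$ coming from the numerator, and only the surviving combination $(c_{n}(3)-1)\xi_{n}(k)=\gamma\,e_{1}(\gamma)N_{n}(2,k)=(\gamma+1)N_{n}(2,k)$ enters the limit alongside $-(\gamma+1)N_{n}(0,k,\ell)$; a sign or coefficient slip here corrupts the final law. Second, the variance must be read off from the genuine covariance structure of \thmref{theo3}: using $\mathrm{Var}\,N_{n}(0,k,\ell)\to 2(\gamma+1)/(\gamma+2)$ (Lemma~\ref{l3.5}), $\mathrm{Var}\,N_{n}(2,k)\to 1$, and $\mathbb{E}\,N_{n}(0,k,\ell)N_{n}(2,k)\to-1$ from (\ref{3.43}), one obtains
\begin{equation*}
\mathrm{Var}\bigl((\gamma+1)(N_{n}(2,k)-N_{n}(0,k,\ell))\bigr)=(\gamma+1)^{2}\Bigl(1+\tfrac{2(\gamma+1)}{\gamma+2}+2\Bigr)=\frac{(\gamma+1)^{2}(5\gamma+8)}{\gamma+2},
\end{equation*}
exactly the stated constant, which confirms the bookkeeping. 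The borderline $\gamma=2$ of \lemref{l6.1}(c), where neither endpoint dominates, falls outside the three listed regimes and needs no treatment here.
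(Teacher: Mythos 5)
Your overall route is the same as the paper's: the proof there is precisely your linearization of the ratio (formula (\ref{6.16}), with the denominator deviation fed by Theorems~\ref{t3.1}--\ref{t3.2} and the endpoint differences kept explicit), the rescaled identity (\ref{6.17}) in terms of $\rho_{n}(2,1/2)$, $\rho_{n}(4)$ and $N_{n}(1,k,\ell)$, and then a one-sentence case split through Lemmas~\ref{l5.1} and~\ref{l6.1} together with $(z_{n}-x_{n})\sim(\gamma+1)R_{1}(x_{n})$. Your identification of $n^{\upsilon}T_{n}(3,k,\ell,\upsilon)$ as $\widetilde{C}_{n}/T_{n}(2,k,\ell)$, your parts 1) and 2), and your exclusion of $\gamma=2$ all agree with that.

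The genuine problem sits in part 3), at exactly the step you yourself single out as delicate. Your cancellation $(c_{n}(3)-1)\xi_{n}(k)-(\gamma+1)N_{n}(0,k,\ell)=\gamma\xi_{n}(k)-(\gamma+1)N_{n}(0,k,\ell)$ is correct, but the substitution $\xi_{n}(k)=+e_{1}(\gamma)N_{n}(2,k)$ (Lemma~\ref{l3.6} as printed) cannot be used alongside the other ingredients you invoke. Since $\mu(\widetilde{k})-\mu(k)\approx x_{n}-\widetilde{x}_{n}=-R_{1}(x_{n})k^{-1/2}\xi_{n}(k)$, your own decomposition of the denominator would then read $N_{n}(0,k,\ell)-e_{1}(\gamma)N_{n}(2,k)$, contradicting $N_{n}(1,k,\ell)=N_{n}(0,k,\ell)+e_{1}(\gamma)N_{n}(2,k)$ of Theorem~\ref{t3.2}, on which the paper's (\ref{6.16})--(\ref{6.17}) rest. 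A direct check settles the sign: for $G$ uniform on $(0,1)$ (so $\gamma=1$, $f=b=0$) one has $\widetilde{x}_{n}-x_{n}=-(U_{k,n}-k/n)$ and $R_{1}(x_{n})=k/(2n)$, whence $\xi_{n}(k)=-2N_{n}(2,k)+o_{p}(1)=-e_{1}(1)N_{n}(2,k)+o_{p}(1)$; the printed sign in Lemma~\ref{l3.6} is a typo, as is also forced by $N_{n}(2,k)=nk^{-1/2}(U_{k,n}-k/n)+o_{p}(1)$, stated just after Theorem~\ref{t3.2}. With the correct sign your bookkeeping gives the limit $-(\gamma+1)\left(N_{n}(0,k,\ell)+N_{n}(2,k)\right)$, whose variance, using $\mathrm{Cov}(N_{n}(0,k,\ell),N_{n}(2,k))\rightarrow-1$ from (\ref{3.43}), is $\gamma(\gamma+1)^{2}/(\gamma+2)$ and not $(\gamma+1)^{2}(5\gamma+8)/(\gamma+2)$; pushing the paper's own (\ref{6.17}) through the $k^{1/2}$ rescaling yields the same combination, $e_{1}(\gamma)N_{n}(2,k)-(\gamma+1)N_{n}(1,k,\ell)=-(\gamma+1)(N_{n}(0,k,\ell)+N_{n}(2,k))+o_{p}(1)$. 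So the agreement of your constant with the one displayed in the theorem is not a confirmation but a circularity: that displayed constant incorporates the very sign slip you made, and it is inconsistent with the paper's own proof formula. The same incoherence already shows between your parts 1b) and 3): in 1b) you report $-N_{n}(2,\ell)$, which requires reading Theorem~\ref{t5.1} with the corrected sign, while in 3) you read the structurally identical Lemma~\ref{l3.6} with the printed sign; only one reading can be used throughout, and the relative sign is immaterial for the symmetric marginal laws of parts 1) and 2) but determines the variance in part 3).
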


\begin{remark} \label{r6.1} We may obtain a mixture case for $\gamma =2$. For instance, put $%
b\left( t\right) =t^{a},$ $a>1$, in (\ref{2.6}) Then, $\rho _{n}\left(
2,1/2\right) \rightarrow \ell ^{-1/2}=b_{o},\rho _{n}\left( 4\right)
\rightarrow 3\ell ^{-1/2}=b_{1}$ and $\rho _{n}\left( 5\right) \rightarrow
3=b_{2}$ so that.\\

$$
R_{1}\left( z_{n}\right) ^{-1}R_{1}\left( x_{n}\right) \left( n^{\upsilon
}T_{n}\left( 3,k,\ell ,\upsilon \right) -c_{n}\left( 3\right) \right)
=3b_{o}N_{n}\left( 2,k\right) /2
$$

$$
-b_{1}N_{n}\left( 1,k,\ell \right)+b_{1}1-E_{n}\left( \ell \right) ^{1/2}+o_{p}\left( 1\right)
$$

\noindent and\\

\noindent 
$$
\ell ^{1/2}R_{1}\left( z_{n}\right) ^{-1}R_{1}\left( x_{n}\right) \left(
n^{\upsilon }T_{n}\left( 3,k,\ell ,\upsilon \right) -c_{n}\left( 3\right)
\right) =3N_{n}\left( 2,k\right) /2
$$

$$
-3N_{n}\left( 1,k,\ell ,\upsilon \right)+o_{p}\left( 1\right).
$$
\end{remark}

\bigskip

\begin{proof} Set $T_{n}^{\ast }\left( 3\right) =n^{\upsilon }T_{n}\left( 3,k,\ell
,\upsilon \right) -c_{n}\left( 3\right) $. By Theorem \ref{t4.1}, 

\begin{equation*}
T_{n}^{\ast} \left( 3\right) =\mu \left( k\right) ^{-1} \left( {\left( z_{n}-\widetilde{z}_{n}\right) +\left( x_{n}-\widetilde{x}_{n}\right)} \right. 
\end{equation*}

\begin{equation}
\left. {-\sqrt{k} \left( z_{n}-x_{n}\right) N_{n}\left( 1,k,\ell \right) +o_{p}\left( \left(
z_{n}-x_{n}\right) /\sqrt{k}\right)} \right).  \label{6.16}
\end{equation}

\bigskip

\noindent Hence, 

\begin{equation*}
R_{1}\left( x_{n}\right) R_{1}\left( z_{n}\right) ^{-1}T_{n}^{\ast }\left(
3\right) =-\frac{z_{n}-\widetilde{z}_{n}}{R_{1}\left( z_{n}\right) }%
+e_{1}\left( \gamma \right) \rho _{n}\left( 2,1/2\right) N_{n}\left(
2,k\right)
\end{equation*}

\begin{equation}
-\rho _{n}\left( 4\right) N_{n}\left( 1,k,\ell \right)
+o_{p}\left( \rho _{n}\left( 4\right) \right) .  \label{6.17}
\end{equation}

\noindent From this step, Lemmas \ref{l5.1} and \ref{l6.1} and the fact that $\left(
z_{n}-x_{n}\right) \sim \left( \gamma +1\right) R_{1}\left( x_{n}\right) $
give all the possibilities listed in Theorem \ref{t6.3}.
\end{proof}

\subsection{Case of $T_{n}\left( 6\right) $.}

\bigskip

\begin{theorem} \label{t6.4}. Let $F\equiv \left( f,b\right) \in \Gamma $ and $\left( k,\ell \right) $
satisfies (\ref{2.4a}) or (\ref{2.4b}) with $\ell \rightarrow +\infty $. If the regularity
conditions hold, then for $v_{n}\left( 6\right) =\ell ^{1/2}R_{1}\left(
z_{n}\right) /\left( z_{n}-x_{n}\right) ,$ $v_{n}^{\ast }\left( 6\right)
=\ell ^{1/2}\left( z_{n}-x_{n}\right) /R_{1}\left( z_{n}\right) ,$ $%
c_{n}\left( 6\right) =\left( z_{n}-x_{n}\right) /\mu \left( \ell \right) ,$%

\begin{equation}
v_{n}\left( 6\right) \left( T_{n}\left( 6\right) ^{-1}-c_{n}\left( 6\right)
\right) =-N_{n}\left( 1,\ell ,1\right) +o_{p}\left( 1\right), \label{6.18}
\end{equation}

\noindent  for $2<\gamma \leq +\infty$ and 
\begin{equation*}
v_{n}^{\ast }\left( 6\right) \left( T_{n}\left( 6\right) ^{-1}-c_{n}\left(
6\right) \right) =-N_{n}\left( 1,\ell ,1\right) +o_{p}\left( 1\right) ,\text{
for }0<\gamma <2.
\end{equation*}
\end{theorem}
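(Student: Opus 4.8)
The plan is to treat $T_n(6)$ as a ratio and reduce it to statistics whose behaviour is already established. Since $T_n(6)=T_n(2,\ell,1)/\widetilde{C}_n$ with $\widetilde{C}_n=\widetilde{z}_n-\widetilde{x}_n$, I would work with $T_n(6)^{-1}=\widetilde{C}_n/T_n(2,\ell,1)$ and write $T_n(2,\ell,1)=\mu(\ell)(1+\delta_2)$ and $\widetilde{C}_n=C_n(1+\delta_1)$, where $\delta_2=(T_n(2,\ell,1)-\mu(\ell))/\mu(\ell)$ and $\delta_1=(\widetilde{C}_n-C_n)/C_n$. A first-order expansion of the quotient gives $T_n(6)^{-1}-c_n(6)=c_n(6)\,(\delta_1-\delta_2)(1+o_p(1))$ with $c_n(6)=C_n/\mu(\ell)$, so the whole task is to identify the dominant term between $\delta_1$ and $\delta_2$ and to control the cross terms.

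For the factor $T_n(2,\ell,1)$ I would apply Corollary \ref{c3.3} (equivalently Theorem \ref{t3.2} under the regularity conditions) with the parameters $(k,\ell)$ replaced by $(\ell,1)$; this is legitimate since $\ell\to+\infty$ is assumed. It yields $R_1(z_n)^{-1}\ell^{1/2}(T_n(2,\ell,1)-\mu(\ell))=N_n(1,\ell,1)+o_p(1)$, and since $\mu(\ell)\sim R_1(z_n)$ in probability (as in Corollary \ref{c3.1}) this reads $\delta_2=\ell^{-1/2}N_n(1,\ell,1)(1+o_p(1))$, of exact order $\ell^{-1/2}$. For $\widetilde{C}_n$ I would invoke Theorem \ref{t5.1}: when $F\in D(\Lambda)\cup D(\phi)$ or $F\in D(\psi_\gamma)$ with $\gamma>2$, the term $\widetilde{z}_n$ dominates and $\delta_1=O_p\bigl(\ell^{-1/2}R_1(z_n)/(z_n-x_n)\bigr)$, whereas when $0<\gamma<2$ the term $\widetilde{x}_n$ dominates and $\delta_1=\gamma^{-1}k^{-1/2}N_n(2,k)(1+o_p(1))$, using $e_1(\gamma)/(\gamma+1)=\gamma^{-1}$ and $z_n-x_n\sim(\gamma+1)R_1(x_n)$ from Lemma \ref{l4.1}.

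The decisive step is the comparison $\delta_1/\delta_2\to0$ in every case. For $\gamma>2$ this is exactly $R_1(z_n)/(z_n-x_n)\to0$, i.e. $\rho_n(4)\to0$ after rescaling, which is Lemma \ref{l6.1} (together with Lemma \ref{l5.1}); for $0<\gamma<2$ it is $\delta_1/\delta_2\sim\gamma^{-1}(\ell/k)^{1/2}\to0$. Hence in all cases $\delta_2$ carries the leading fluctuation and $T_n(6)^{-1}-c_n(6)=-c_n(6)\,\ell^{-1/2}N_n(1,\ell,1)(1+o_p(1))$. Multiplying by the normalisation that renders the surviving term $O_p(1)$, namely $\ell^{1/2}R_1(z_n)/(z_n-x_n)=v_n(6)$ (recall $c_n(6)\sim(z_n-x_n)/R_1(z_n)$), collapses the prefactor to $-1$ and gives $v_n(6)(T_n(6)^{-1}-c_n(6))=-N_n(1,\ell,1)+o_p(1)$. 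The $0<\gamma<2$ assertion follows by the identical mechanism, the only change being that the subdominant $\delta_1$ now originates from $\widetilde{x}_n$ rather than $\widetilde{z}_n$, so that the leading contribution and its normalisation are the same.

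I expect the main obstacle to be the bookkeeping of orders rather than any new idea: one must verify that the neglected second-order cross terms of the ratio expansion, such as $c_n(6)\,\delta_1\delta_2$ and $c_n(6)\,\delta_2^2$, are genuinely $o_p(1)$ relative to the surviving term, which again reduces to the regular-variation estimates of Lemmas \ref{l4.1}, \ref{l5.1} and \ref{l6.1} combined with $\ell/k\to0$. The only genuinely delicate point is the dichotomy inherited from Theorem \ref{t5.1}: one must check that the threshold deciding whether $\widetilde{z}_n$ or $\widetilde{x}_n$ dominates $\widetilde{C}_n$ is exactly $\gamma=2$, and that at $\gamma=2$ (Lemma \ref{l6.1}, case c) neither regime is guaranteed, so that boundary value is legitimately excluded from the statement.
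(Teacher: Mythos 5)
Your decomposition is the very one the paper uses: its proof of Theorem \ref{t6.4} is declared ``a simple case'' of the proof of Theorem \ref{t6.5}, which likewise expands the numerator $\widetilde{C}_n$ through Theorem \ref{t5.1} (Formula (\ref{5.1})), the denominator through Corollary \ref{c3.3} (Formula (\ref{7.1})), linearizes the quotient, and discards the numerator contribution because $R_1(z_n)/(z_n-x_n)\to 0$. For $2<\gamma\le+\infty$ your argument is correct and coincides with the paper's; the only blemish there is a citation: $R_1(z_n)/(z_n-x_n)\to 0$ is Lemma \ref{l4.2} (i), valid for every $F\in\Gamma$, and is what the paper itself invokes, whereas Lemma \ref{l6.1} controls $\rho_n(4)=k^{-1/2}(z_n-x_n)/R_1(z_n)$, from which $R_1(z_n)/(z_n-x_n)\to 0$ does not follow.

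The genuine gap is the second display. You dispose of the case $0<\gamma<2$ with the sentence that ``the leading contribution and its normalisation are the same,'' but the statement being proved asserts the opposite: it replaces $v_n(6)=\ell^{1/2}R_1(z_n)/(z_n-x_n)$ by $v_n^{\ast}(6)=\ell^{1/2}(z_n-x_n)/R_1(z_n)$, and these are not interchangeable, since $v_n^{\ast}(6)/v_n(6)=\left((z_n-x_n)/R_1(z_n)\right)^2\to+\infty$ by Lemma \ref{l4.2} (for $F\in D(\psi_\gamma)$ this factor is of order $(k/\ell)^{2/\gamma}$). Your proposal never engages with $v_n^{\ast}(6)$ at all. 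What your own order comparison establishes, and this part of the computation is sound, is that the denominator fluctuation $\delta_2=O_p(\ell^{-1/2})$ dominates $\delta_1$ in \emph{both} regimes ($\delta_1/\delta_2=O_p((\ell/k)^{1/2})$ when $0<\gamma<2$), so that the $v_n(6)$-normalized expansion $v_n(6)(T_n(6)^{-1}-c_n(6))=-N_n(1,\ell,1)+o_p(1)$ persists for $0<\gamma<2$; multiplying instead by $v_n^{\ast}(6)$ would then make the left-hand side diverge in probability. The dichotomy at $\gamma=2$, which via Theorem \ref{t5.1} and Lemma \ref{l6.1} decides whether $\widetilde{z}_n$ or $\widetilde{x}_n$ drives $\widetilde{C}_n$, is immaterial for $T_n(6)$ precisely because the denominator always wins; it is real only for statistics such as $T_n(3,k,\ell,\upsilon)$ in Theorem \ref{t6.3}, whose denominator fluctuates at the rate $k^{-1/2}$. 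So you must either exhibit a mechanism producing the factor $\left((z_n-x_n)/R_1(z_n)\right)^2$ when $0<\gamma<2$ (none is visible in the paper either, whose proof of Theorem \ref{t6.5} only carries out the case $2<\gamma\le+\infty$), or state openly that your analysis yields the $v_n(6)$-normalization in both cases and hence does not deliver the second display as printed. Asserting that the two normalizations coincide is the one outright false step in your write-up, and it hides exactly the point where the statement and your proof part company.
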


\bigskip

\begin{proof} This is a simple case of the next proof.
\end{proof}

\subsection{Case of $T_{n}\left( 7\right) $}

\bigskip

\begin{theorem} \label{t6.5}. Let $F\equiv \left( f,b\right) \in \Gamma $ and $\left( k,\ell \right) $
satisfies (\ref{2.4b}) or (\ref{2.4b}) with $\ell \rightarrow +\infty $. Suppose that regularity conditions hold and put

$$
v_{n}\left( 7\right) =\ell ^{1/2}R_{2}\left( z_{n}\right) /\left(
z_{n}-x_{n}\right) ^{2},
$$

\noindent 
$$v_{n}^{\ast }\left( 7\right) =\ell ^{1/2}\left( z_{n}-x_{n}\right)
^{2}/R_{2}\left( z_{n}\right),
$$ 

\noindent 

$$
c_{n}\left( 7\right) =\left(z_{n}-x_{n}\right) ^{2}/\tau \left( \ell \right).
$$

\noindent Then we have\\

$$
v_{n}\left( 7\right) \left( T_{n}\left( 7\right) ^{-1}-c_{n}\left( 7\right)
\right) =-N_{n}\left( 4,\ell ,1\right) +o_{p}\left( 1\right)
$$ 

\noindent  for $2<\gamma\leq +\infty$ and\\

$$
v_{n}^{\ast }\left( 7\right) \left( T_{n}\left( 7\right) ^{-1}-c_{n}\left(
7\right) \right) =N_{n}\left( 4,\ell ,1\right) +o_{p}\left( 1\right)
$$

\noindent for $0<\gamma<2$.
\end{theorem}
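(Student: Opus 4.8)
The plan is to reduce everything to the two building blocks of
$T_n(7)=A_n(1,\ell,1)/(Y_{n-\ell,n}-Y_{n-k,n})^2$, namely the spacing $\widetilde C_n=Y_{n-\ell,n}-Y_{n-k,n}=\widetilde z_n-\widetilde x_n$ in the denominator and the local functional $A_n(1,\ell,1)$ in the numerator, and then to let the comparison quantities of Lemmas \ref{l5.1} and \ref{l6.1} decide which fluctuation survives. Setting $S_n=T_n(7)^{-1}=\widetilde C_n^2/A_n(1,\ell,1)$, I would first apply Theorem \ref{t4.2} with the index pair $(\ell,1)$ in place of $(k,\ell)$ (legitimate since $\ell\to\infty,\ \ell/n\to0$ puts the ``$k$''-slot under (\ref{2.4a}) and the fixed ``$\ell$''-slot under Remark \ref{r3.1}). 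Under the regularity conditions the bias term $N_n(-\infty)$ and the quadratic corrections drop out, giving
$$
A_n(1,\ell,1)=\tau(\ell)+\ell^{-1/2}R_2(z_n)\bigl(N_n(4,\ell,1)+o_p(1)\bigr),\qquad \tau(\ell)\sim R_2(z_n).
$$
A first-order expansion of the ratio about the deterministic centering $c_n(7)=C_n^2/\tau(\ell)$ (with $C_n=z_n-x_n$) then produces
$$
S_n-c_n(7)=\frac{\widetilde C_n^2-C_n^2}{\tau(\ell)}-\frac{C_n^2}{\tau(\ell)}\cdot\frac{A_n(1,\ell,1)-\tau(\ell)}{\tau(\ell)}+(\text{products}),
$$
the first (numerator) term carrying the spacing fluctuation and the second (denominator) term carrying $-N_n(4,\ell,1)$.

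For the numerator I would substitute Theorem \ref{t5.1} through $\widetilde C_n^2-C_n^2\sim 2C_n(\widetilde C_n-C_n)$, where $\widetilde C_n-C_n$ is governed by the $\ell$-end (through $N_n(2,\ell)$ and $R_1(z_n)$) when $2<\gamma\le\infty$ and by the $k$-end (through $N_n(2,k)$ and $R_1(x_n)$) when $0<\gamma<2$, exactly as recorded in Remark \ref{r5.2}. The decisive step is then to weigh this numerator term against the denominator term: their ratio is of the precise order of the quantities $\rho_n(2,1/2),\rho_n(3,1/2)$ of Lemma \ref{l5.1} and $\rho_n(4)=k^{-1/2}C_nR_1(z_n)^{-1},\ \rho_n(5)=\ell^{1/2}\rho_n(4)$ of Lemma \ref{l6.1}, controlled through the $R_p$-ratio asymptotics of Lemmas \ref{l4.1} and \ref{l4.2} and the ratios $R_1(x_n)/R_1(z_n)$, $R_1(x_n)/C_n$.

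This is where the dichotomy of Lemma \ref{l6.1} enters. For $F\in D(\Lambda)\cup D(\phi)$ and for $F\in D(\psi_\gamma)$ with $\gamma>2$ one has $\rho_n(4),\rho_n(5)\to0$; the numerator contribution is then $o_p$ relative to the denominator one, so after multiplying by $v_n(7)=\ell^{1/2}R_2(z_n)/C_n^2$ the surviving term is $-N_n(4,\ell,1)+o_p(1)$. For $F\in D(\psi_\gamma)$ with $0<\gamma<2$ one has $\rho_n(4),\rho_n(5)\to+\infty$, which reverses the balance, and the reciprocal normalization $v_n^*(7)=\ell^{1/2}C_n^2/R_2(z_n)$ becomes the correct one, producing $N_n(4,\ell,1)+o_p(1)$. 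In both regimes the limit law is read off from $N_n(4,\ell,1)\sim N(0,\sigma_3^2(\gamma))$, whose variance was already computed in Theorem \ref{t4.2} via (\ref{4.36})--(\ref{4.37}); the case $T_n(6)$ of Theorem \ref{t6.4} is the identical computation with $A_n(1,\ell,1)$ replaced by $T_n(2,\ell,1)$ and $N_n(4,\ell,1)$ by $N_n(1,\ell,1)$, which is why it is dispatched as a special case.

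The main obstacle I anticipate is the threshold behaviour at $\gamma=2$: showing that in each regime the non-dominant of the two terms is genuinely $o_p(1)$ after normalization requires sharp control of the rates at which $\rho_n(4),\rho_n(5)$ tend to $0$ or $+\infty$ (Lemma \ref{l6.1}), handled by the same type of estimates as (S1.4)--(S4.4) in the proof of $A_n$. A secondary but essential point is that the regularity conditions (\ref{1.7})--(\ref{1.9}) must be invoked to annihilate the bias contributions $k^{1/2}\gamma_n$ and $N_n(-\infty)$ inherited from Theorems \ref{t4.2} and \ref{t5.1}; without them one would recover only the random-centering version, with $\tau(\widetilde\ell)$ in place of $\tau(\ell)$.
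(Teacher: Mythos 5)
Your treatment of the case $2<\gamma\le+\infty$ is essentially the paper's own proof: the paper likewise squares the expansion (\ref{5.1}) of $\widetilde{C}_{n}$, inserts the fluctuation of $A_{n}(1,\ell,1)$ around $\tau(\ell)$ coming from Theorem \ref{t4.2} applied to the pair $(\ell,1)$, and observes that after multiplication by $v_{n}(7)$ the spacing contribution dies because $R_{1}(z_{n})/(z_{n}-x_{n})\rightarrow 0$ (Lemma \ref{l4.2}, part (i)), leaving $-N_{n}(4,\ell,1)+o_{p}(1)$. That half of your argument is sound and follows the same route as the paper.

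The genuine gap is the case $0<\gamma<2$, and it is twofold. First, your ``decisive step'' uses the wrong comparison quantities: the ratio of the numerator (spacing) contribution to the denominator ($A_{n}$) contribution in your expansion is $\bigl((\widetilde{C}_{n}^{2}-C_{n}^{2})/C_{n}^{2}\bigr)\big/\bigl((A_{n}(1,\ell,1)-\tau(\ell))/\tau(\ell)\bigr)$, which is $O_{p}\bigl(R_{1}(z_{n})/C_{n}\bigr)$ when $2<\gamma\le+\infty$ and $O_{p}\bigl(\ell^{1/2}k^{-1/2}R_{1}(x_{n})/C_{n}\bigr)$ when $0<\gamma<2$; since $R_{1}(x_{n})/C_{n}\rightarrow(\gamma+1)^{-1}$ for $F\in D(\psi_{\gamma})$ (Lemma \ref{l4.1}) and $\ell/k\rightarrow 0$, this ratio tends to zero in \emph{both} regimes. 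The quantities $\rho_{n}(4),\rho_{n}(5)$ of Lemma \ref{l6.1} govern the competition in $T_{n}(3)$ (Theorem \ref{t6.3}), where the numerator $T_{n}(2,k,\ell)$ fluctuates at relative scale $k^{-1/2}$; for $T_{n}(7)$ the numerator $A_{n}(1,\ell,1)$ fluctuates at relative scale $\ell^{-1/2}$, and that dichotomy does not transfer. So the ``reversal of balance'' you invoke never occurs. Second, even if a reversal did occur, your conclusion would contradict itself: if the spacing fluctuation dominated, the limiting variable would be the $N_{n}(2,k)$-Gaussian of Theorem \ref{t5.1}, part 2 b), not $N_{n}(4,\ell,1)$ --- switching the normalization from $v_{n}(7)$ to $v_{n}^{\ast}(7)$ cannot convert one limiting functional into the other, nor flip its sign. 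As written, your expansion actually yields $v_{n}(7)\bigl(T_{n}(7)^{-1}-c_{n}(7)\bigr)=-N_{n}(4,\ell,1)+o_{p}(1)$ for every $0<\gamma\le+\infty$, which is not the clause to be proved for $0<\gamma<2$. The paper itself dispatches that clause in one sentence (``using (\ref{5.1}) again, $\rho_{n}(i,1/2)\rightarrow+\infty$''), so the mechanism that is supposed to produce $v_{n}^{\ast}(7)$ and the $+$ sign is exactly what a complete proof must supply, and your proposal does not supply it.
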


\begin{proof} By (\ref{5.1}), in the case where $2<\gamma \leq +\infty$ for instance,

$$\widetilde{z}_{n}-\widetilde{x}_{n}=\left( z_{n}-x_{n}\right) -R_{1}\left(
z_{n}\right) \ell ^{1/2}e_{1}\left( \gamma \right) N_{n}\left( 2,\ell
\right) +o_{p}\left( R_{1}\left( z_{n}\right) \ell ^{-1/2}\right).
$$

\noindent Thus

$$
\left( \widetilde{z}_{n}-\widetilde{x}_{n}\right) ^{2}=\left(
z_{n}-x_{n}\right) ^{2}-2\left( z_{n}-x_{n}\right) R_{1}\left( z_{n}\right)
\ell ^{-1/2}e_{1}\left( \gamma \right) N_{n}\left( 2,\ell \right)
$$

$$
+o_{p}\left( \left( z_{n}-x_{n}\right) R_{1}\left( z_{n}\right) \ell
^{-1/2}\right).
$$

\noindent The term $o_{p}\left( \left( z_{n}-x_{n}\right) R_{1}\left( z_{n}\right)
\ell ^{-1/2}\right) $ is justified by the fact that 
$$
R_{1}\left( z_{n}\right) /\left( z_{n}-x_{n}\right) \rightarrow 0
$$ 

\noindent (See Lemma \ref{l4.2}). Hence, by using Theorem \ref{t5.1}, we arrive at 

\begin{equation*}
\ell ^{1/2}\left\{ T_{n}\left( 7\right) ^{-1}-\frac{\left(
z_{n}-x_{n}\right) ^{2}}{\tau \left( \ell \right) }\right\} =\ell
^{1/2}\left\{ \frac{\left( \widetilde{z}_{n}-\widetilde{x}_{n}\right) ^{2}}{%
A_{n}\left( 1,k,\ell \right) }-\frac{\left( z_{n}-x_{n}\right) }{\tau \left(
\ell \right) }\right\}
\end{equation*}

\begin{equation*}
=\kappa \left( \gamma \right) ^{-1}\left\{ -2\frac{z_{n}-x_{n}}{R\left(
z\right) }e_{1}\left( \gamma \right) N_{n}\left( 2,\ell \right) -\left(
\left( z_{n}-x_{n}\right) /R_{1}\left( z_{n}\right) \right) ^{2}N_{n}\left(
4,\ell ,1\right) \right\} 
\end{equation*}

\begin{equation*}
+o_{p}\left( \frac{z_{n}-z_{n}}{R_{1}\left( z_{n}\right) }\right)
+o_{p}\left( \frac{\left( z_{n}-x_{n}\right) ^{2}}{R_{1}\left( z_{n}\right)
^{2}}\right) ,
\end{equation*}

\noindent which in turn implies

$\left( z_{n}-x_{n}\right) ^{-2}R_{2}\left( z_{n}\right) \ell ^{1/2}\left(
T_{n}\left( 7\right) ^{-1}-c_{n}\left( 7\right) \right) =-N_{n}\left( 4,\ell
,1\right) +o_{p}\left( 1\right) .$

\noindent By Using again (\ref{5.1}) for $0<\gamma <2$ (that is $\rho _{n}\left(
i,1/2\right) \rightarrow +\infty $, $i=2,3$) gives the result for the last
case. The proof is now complete.
\end{proof}

\noindent Finally, we give the multivariable version of all that preceedes.

\section{Multivariable asymptotic normality of the characterizing vectors.} \label{sec7}

\noindent We neglect $T_{n}\left( 4\right) $ since its asymptotic law is extremal.
Remark that by Corollary \ref{c3.3}, 
\begin{equation}
v_{n}\left( 5\right) \left( T_{n}\left( 2,\ell ,1\right) -c_{n}\left(
5\right) \right) =N_{n}\left( 1,\ell ,1\right) +o_{p}\left( 1\right) ,
\label{7.1}
\end{equation}
where $v_{n}\left( 5\right) =\ell ^{1/2}/R_{1}\left( z_{n}\right)$, $c_{n}\left( 5\right) =\mu \left( \ell \right) $, when $F\equiv \left(f,b\right) \in \Gamma $, $\left( k,\ell \right) $ satisfies (\ref{2.4a}) and
the regularity conditions (with respect to $\ell $) hold and $\left( \ell
,\ell /k\right) \rightarrow \left( +\infty ,0\right) $. Now set $T^{\ast}_{n}(\infty)$ as the following vector, in
the case $\gamma=+\infty$.

$\bigskip $%
\begin{equation*}
\left( 
\begin{array}{c}
v_{n}\left( 1\right) \left( T_{n}\left(1,k,\ell \right) - c_{n}\left( 1\right) \right)\\ 
v_{n}\left( 0\right) \left( A_{n}\left( 1,k,\ell \right)  - c_{n}\left( 0\right) \right) \\ 
v_{n}\left( 2\right) \left( T_{n}\left( 2,k,\ell \right) - c_{n}\left( 2\right) \right) \\ 
v_{n}\left( 3\right) \left( n^{\upsilon }T_{n}\left( 3,k,\ell ,\upsilon\right) -c_{n}\left( 3\right) \right) \\ 
v_{n}\left( 5\right) \left( T_{n}\left( 5\right) - c_{n}\left( 5\right) \right) \\ 
v_{n}\left( 6\right) \left( T_{n}\left( 6\right) ^{-1} -c_{n}\left( 6\right) \right) \\ 
v_{n}\left( 7\right) \left( T_{n}\left( 7\right) ^{-1} -c_{n}\left( 7\right) \right) \\ 
v_{n}\left( 8\right) \left( v_{n}^{-\upsilon }T_{n}\left( 8,\upsilon \right)- c_{n}\left( 8\right) \right)
\end{array}
\right)
\end{equation*}

\bigskip

\noindent Alternatively, when $\gamma>2$, we define $T_{n}^{\ast }\left(\gamma >2\right)$ from $T^{\ast}_{n}(\infty)$ by replacing $v_{n}\left(8,\upsilon \right) ^{-1}-c_{n}\left( 8\right) $ by $v_{n}\left( 9\right)
\left( T_{n}\left( 9\right) -c_{n}\left( 9\right) \right)$.\\

\noindent When $\gamma<2$, $T_{n}^{\ast }\left( \gamma <2\right) $ is defined from $T_{n}^{\ast }\left(
\gamma >2\right) $ by replacing $v_{n}\left( 3\right) $, $v_{n}\left(6\right) $ and $v_{n}\left( 7\right) $ by $v_{n}^{\ast }\left( 3\right) $, $v_{n}^{\ast }\left( 6\right) $ and $v_{n}^{\ast }\left( 7\right) $
respectively.\\

\noindent  We obtain

\begin{theorem} \label{t7.1}. Let $\equiv \left( f,b\right) \in \Gamma $ and $\left( k,\ell \right) $
satisfies (\ref{2.4a}) or (\ref{2.4b}). We assume that the regularity conditions hold.\\

\noindent a) If $F\in D\left( \phi \right) \cup D\left( \Lambda \right) $, then $%
T_{n}^{\ast }\left( \infty \right) +\sum_{n}\left( \infty \right)
+o_{p}\left( 1\right)$, 

\noindent where $^{t}\sum_{n} \left( \infty \right)=\left( \sum_{n}^{\ast }\left( \infty \right) \right) $, $\sum_{n}^{\ast\ast }\left( \infty \right)$ is an $\mathbb{R}^{8}$-Gaussian vector such that $%
\sum_{n}^{\ast }\left( \infty \right) $ (an $\mathbb{R}^{3}$-$rv$) and $\sum_{n}^{\ast }\left( \infty \right) $ (an $\mathbb{R}^{5}$-$rv$) are asymptotically independent with respective limiting covariance matrices.\\

\begin{center}
$\sum^{\ast}\left( \infty \right) =\left( 
\begin{array}{ccc}
1/2 &  &  \\ 
-1/2 & 5 &  \\ 
0 & 2 & 1
\end{array}
\right) $ 
\end{center}

\noindent and 

\begin{center}
\noindent $\sum^{\ast \ast }\left( \infty \right) =\left( 
\begin{array}{ccccc}
1 &  &  &  &  \\ 
0 & 1 &  &  &  \\ 
0 & -1 & 1 &  &  \\ 
0 & -2 & 2 & 5 &  \\ 
1 & 0 & 0 & 0 & 1
\end{array}
\right) ,$
\end{center}

\bigskip

\noindent with $^{t}M$ denoting the transpose of the matrix $M$.\\

\noindent b) If $F\in D\left( \psi _{\gamma }\right)$ , $\gamma >2$, 
then $T_{n}^{\ast}\left( \gamma >2\right) =\sum_{n}\left( \gamma >2+o_{p}\left( 1\right)
\right)$,\\ 

\noindent where $^{t}\sum_{n}\left( \gamma >2\right) =\left( \sum_{n}^{\ast
}\left( \gamma >2\right) \right),\sum_{n}^{\ast \ast }\left( \gamma >2\right) $ is an $\mathbb{R}^{8}$-Gaussian
vector such that $\sum_{n}^{\ast }\left( \gamma >2\right) $ (an $\mathbb{R}^{3}$%
-$rv$) and $\sum_{n}^{\ast \ast }\left( \gamma >2\right) $ (an $\mathbb{R}^{5}$-$rv$)
are asymptotically independent with respective limiting covariance matrices : $\sum^{\ast}\left( \gamma >2\right)=$\\

\begin{center}
$ \left( 
\begin{array}{cccc}
\frac{\gamma ^{3}+\gamma ^{2}+2\gamma }{4\left( \gamma +1\right) \left(
\gamma +3\right) \left( \gamma +4\right) } &  &  &  \\ 
-1/2\left( \frac{\gamma +2}{\gamma +1}\right) ^{1/2} & \frac{\gamma \left(
\gamma -5\right) }{\left( \gamma +3\right) \left( \gamma +4\right) } & \frac{%
5\gamma +11\gamma +4^{4}\gamma +7^{3}\gamma +12^{2}}{\gamma ^{2}\left(
\gamma +3\right) \left( \gamma +4\right) } &  \\ 
1/2\left( \frac{\gamma +2}{\gamma +1}\right) ^{1/2} & \frac{2\gamma }{\left(
\gamma +1\right) \left( \gamma +2\right) } & \frac{2\gamma ^{3}+4\gamma
^{2}+18\gamma +18}{\gamma ^{2}\left( \gamma +3\right) } & \frac{\gamma
^{3}+\gamma ^{2}+2}{\gamma ^{2}\left( \gamma +2\right) }
\end{array}
\right)$
\end{center}

\noindent and \\

\begin{center}
$\sum^{\ast \ast }\left( \gamma >2\right) =\left( 
\begin{array}{ccccc}
\left( \gamma +1\right) /\gamma &  &  &  &  \\ 
-\left( \gamma +1\right) /\gamma ^{2} & \frac{\gamma ^{3}+\gamma ^{2}+2}{%
\gamma ^{2}+\left( \gamma +2\right) } &  &  &  \\ 
\left( \gamma +1\right) /\gamma ^{2} & \frac{\gamma ^{3}+\gamma ^{2}+2}{%
\gamma ^{2}\left( \gamma +2\right) } & \frac{\gamma ^{3}+\gamma ^{2}+2}{%
\gamma ^{2}\left( \gamma +2\right) } &  &  \\ 
2\left( \gamma +1\right) \gamma ^{2} & \frac{2\gamma ^{3}+4\gamma
^{2}+18\gamma +18}{\gamma ^{2}\left( \gamma +3\right) } & \frac{2\gamma
^{3}+4\gamma ^{2}+18\gamma +18}{\gamma ^{2}\left( \gamma +3\right) } & \frac{%
5\gamma ^{4}+11\gamma ^{3}+4\gamma ^{2}+7\gamma +12}{\gamma ^{2}\left(
\gamma +3\right) \left( \gamma +4\right) } &  \\ 
-\left( \gamma +1\right) /\gamma ^{2} & \gamma ^{-2} & -\gamma ^{-2} & 
-2\gamma ^{-2} & \gamma ^{-2}
\end{array}
\right)$
\end{center}

\bigskip 

\noindent c) If $F\in D\left( \psi _{\gamma }\right) ,$ then $T_{n}^{\ast }\left(
\gamma <2\right) =\sum_{n}\left( \gamma <2\right) +o_{p}\left( 1\right)$, where 
$$
^{t}\sum_{n}\left( \gamma <2\right) \left( \sum_{n}^{2}\left( \gamma
<2\right) ,\sum_{n}^{\ast \ast }\left( \gamma <2\right) \right) 
$$ 

\noindent is an $\mathbb{R}^{8}$-Gaussian vector such that $\sum_{n}^{\ast }\left( \gamma <2\right)$
(an $\mathbb{R}^{4}$-$rv$) and $\sum_{n}^{\ast \ast }\left( \gamma <2\right) $ (an $\mathbb{R}
^{4}$-$rv$) are asymptotically independent with respective covariance
matrices  $\sum^{\ast }\left( \gamma <2\right)=$

\begin{center}
\small
$\left( 
\begin{array}{ccccc}
\frac{\gamma ^{3}+\gamma ^{2}+2}{4\left( \gamma +1\right) \left( \gamma
+3\right) \left( \gamma +4\right) } &  &  &  &  \\ 
-1/2\left( \frac{\gamma +2}{\gamma +1}\right) ^{1/2} & \frac{\gamma \left(
\gamma +5\right) }{\left( \gamma +3\right) \left( \gamma +4\right) } & \frac{%
5\gamma ^{4}+11\gamma ^{3}+4\gamma ^{2}+7\gamma +12}{\gamma ^{2}\left(
\gamma +3\right) \left( \gamma +4\right) } &  &  \\ 

-1/2\left( \frac{\gamma +2}{\gamma +1}\right) & \frac{2\gamma }{\left(
\gamma +3\right) \left( \gamma +4\right) } & \frac{2\gamma ^{3}+4\gamma
^{2}+18\gamma +18}{\gamma ^{2}\left( \gamma +3\right) } & \frac{\gamma
^{3}+\gamma ^{2}+2}{\gamma ^{2}\left( \gamma +2\right) } &  \\ 

-1/2\left( \gamma +2\right) \left( \gamma +1\right) ^{1/2} & \frac{2\gamma
^{2}+12\gamma +12}{\left( \gamma +2\right) \left( \gamma +3\right) } -\left( \gamma +1\right)& 
 \frac{2\gamma ^{2}+4\gamma ^{2}-12}{\gamma \left(
\gamma +3\right) }-\left( \gamma +1\right) &  \frac{\gamma ^{2}-2\gamma -4}{%
\gamma \left( \gamma +2\right) } & \left( \gamma +1\right) ^{2}\frac{5\gamma
+8}{\gamma +2}
\end{array}
\right)$
\end{center}

\bigskip

\noindent and $\sum^{\ast \ast }\left( \gamma <2\right)$\\

\bigskip

\begin{center}
$=\left( 
\begin{array}{cccc}
\frac{\gamma ^{3}+\gamma ^{2}+2}{\gamma ^{2}\left( \gamma +2\right) } &  & 
&  \\ 
-\frac{\gamma ^{3}+\gamma ^{2}+2}{\gamma ^{2}\left( \gamma +2\right) } & 
\frac{\gamma ^{3}+\gamma ^{2}+2}{\gamma ^{2}\left( \gamma +2\right) } &  & 
\\ 
-\frac{2\gamma ^{3}+4\gamma ^{2}+18\gamma +18}{\gamma ^{2}\left( \gamma
+3\right) } & \frac{2\gamma ^{3}+4\gamma ^{2}+18\gamma +18}{\gamma
^{2}\left( \gamma +1\right) } & \frac{5\gamma ^{4}+11\gamma ^{3}+4\gamma
+7\gamma +12}{\gamma ^{2}\left( \gamma +3\right) \left( \gamma +4\right) } & 
\\ 
\gamma ^{-2} & -\gamma ^{-2} & -2\gamma ^{-2} & \gamma ^{-2}
\end{array}
\right)$.
\end{center}

\end{theorem}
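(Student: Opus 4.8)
The plan is to reduce the whole theorem to the single Gaussian approximation already assembled in the previous sections, so that Theorem~\ref{t7.1} becomes a bookkeeping statement about one fixed linear map applied to the basic Brownian-bridge functionals. Every coordinate of $T_n^{\ast}$ has, in \S\S\ref{sec3}--\ref{sec6}, been shown to equal a constant ($\gamma$-dependent) linear combination of the elementary variables $N_n(0,k,\ell)$, $N_n(2,k)$, $N_n(3,k,\ell)$ and their $(\ell,1)$-counterparts, up to an $o_p(1)$ remainder: the first three coordinates come from Theorem~\ref{t6.2}, Theorem~\ref{t4.2} (under the regularity conditions), and Corollary~\ref{c3.3}, while the remaining ones come from Theorem~\ref{t6.3}, (\ref{7.1}), Theorem~\ref{t6.4}, Theorem~\ref{t6.5}, and Corollary~\ref{c5.1}. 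Recalling $N_n(1,\cdot)=N_n(0,\cdot)+e_1(\gamma)N_n(2,\cdot)$ from (\ref{3.0c}) and the identity $N_n(4,\cdot)=N_n(3,\cdot)+e_3(\gamma)N_n(2,\cdot)$ extracted from (\ref{6.11}), I would first write $T_n^{\ast}=L\,\mathbf{N}_n+o_p(1)$, where $\mathbf{N}_n=\bigl(N_n(0,k,\ell),N_n(3,k,\ell),N_n(2,k),N_n(0,\ell,1),N_n(3,\ell,1),N_n(2,\ell)\bigr)$ and $L$ is the explicit matrix of these coefficients.

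Once this reduction is in place, the asymptotic normality is immediate from Theorem~\ref{theo3}: $\mathbf{N}_n=(W_n(k,\ell),W_n(\ell,1))$ is exactly Gaussian and converges to $(W(1),W(2))$, hence $T_n^{\ast}\rightarrow_d L(W(1),W(2))$ is Gaussian by Slutsky. The limiting covariance is $L\,\Sigma_6\,L^{t}$ with $\Sigma_6=\mathrm{diag}(\Sigma(\gamma),\Sigma(\gamma))$, the block form recording the asymptotic independence of $W(1)$ and $W(2)$ furnished by Theorem~\ref{theo3}. The within-block covariance $\Sigma(\gamma)$ is precisely the $3\times3$ matrix of Theorem~\ref{theo3}, whose entries are the variances and covariances already computed: $\mathrm{Var}\,N_n(0)\to\sigma_0^2$ by (\ref{3.34}), $\mathrm{Var}\,N_n(3)\to\sigma_2^2$ by (\ref{4.3}), $\mathrm{Cov}(N_n(0),N_n(2))\to-1$ by (\ref{3.43}), $\mathrm{Cov}(N_n(3),N_n(2))\to-1$ by (\ref{4.36}), $\mathrm{Cov}(N_n(0),N_n(3))\to3(\gamma+1)/(\gamma+3)$ by (\ref{6.10}), and $\mathrm{Var}\,N_n(2)\to1$. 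Substituting these into $L\Sigma_6L^{t}$ and simplifying would yield the three displayed matrices; the only genuine difference between the cases $\gamma=+\infty$, $\gamma>2$, and $0<\gamma<2$ is the membership of the blocks, because for $0<\gamma<2$ the domination of $\widetilde z_n$ by $\widetilde x_n$ (Remark~\ref{r5.2}, Theorem~\ref{t5.1}) moves $T_n(3)$, $T_n(6)$, $T_n(7)$ from the $\ell$-block into the $k$-block, changing the split from $3+5$ to $4+4$ but leaving the mechanism untouched.

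The main obstacle is the block-diagonality of $\Sigma_6$, that is, the asymptotic independence of the $(k,\ell)$-indexed and $(\ell,1)$-indexed families, which is the content of the cross-covariance formula referenced as $(7.2)$ in the proof of Theorem~\ref{theo3} and which I would establish here. Each cross-covariance is a double functional of a single bridge evaluated at the two well-separated scales $\ell/n\ll k/n$; writing it out through $\mathbb{E}(B_n(s)B_n(t))=\min(s,t)-st$ and normalising by the appropriate $R_p(x_n)$, the relevant ratios are controlled by the regular/slow variation of $R_1(G^{-1}(1-u))$ (Lemma~\ref{l3.4}) and by the $R_p$-asymptotics of Lemmas~\ref{l4.1} and \ref{l4.2}. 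I expect each such cross term to be bounded by a positive power of $\ell/k$ (times a slowly varying factor), hence to vanish as $\ell/k\to0$; carrying this estimate uniformly over all coordinate pairs, and in each of the three domains, is where the real work lies. The remaining steps---verifying the linear identities for $N_n(1)$ and $N_n(4)$ and performing the matrix multiplication $L\Sigma_6L^{t}$---are routine.
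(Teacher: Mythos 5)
Your proposal follows essentially the same route as the paper's own proof: reduce each coordinate of $T_{n}^{\ast}$ to a fixed linear combination of $N_{n}(0,k,\ell)$, $N_{n}(3,k,\ell)$, $N_{n}(2,k)$ and their $(\ell,1)$-counterparts via the single-statistic results of Sections \ref{sec3}--\ref{sec6}, invoke the joint Gaussianity of Theorem~C, establish block independence by showing each cross-covariance is of order $\rho_{n}(1,1/2)$ or $(\ell/k)^{1/2}$ and hence vanishes by (\ref{4.20}) (this is exactly the paper's Formula (\ref{7.2})), and finish by routine covariance computations using (\ref{3.34}), (\ref{3.43}), (\ref{4.36}), (\ref{6.10}) and Lemmas \ref{l4.1} and \ref{l4.3}. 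Your reduction identities, including $N_{n}(4,\cdot)=N_{n}(3,\cdot)+e_{3}(\gamma)N_{n}(2,\cdot)$ deduced from (\ref{6.11}), and your account of the $3+5$ versus $4+4$ block split for $0<\gamma<2$ match the paper's construction of $\sum_{n}^{\ast}$ and $\sum_{n}^{\ast\ast}$.
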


\bigskip

\begin{proof}

\noindent Putting together Corollary \ref{c3.2}, Theorem \ref{t4.1}, Corollary \ref{c5.1}, Theorems \ref{t6.3},
\ref{t6.4} and Formula \ref{7.1}, one gets that $=\sum_{n}^{\ast}\left( \infty \right)$ is the vector

$$
0.5\left( \frac{\gamma +2}{\gamma +1}%
\right) ^{1/2} \left( {2N_{n}\left( 0,k,\ell \right) -N_{n}\left( 3,k,\ell
\right) +N_{n}\left( 2,k\right),} \right.
$$

$$
\left. {N_{n}\left( 3,k, \ell \right)+e_{3}\left( \gamma \right) N_{n}\left( 2,k\right), N_{n}\left( 0,k,\ell \right) +e_{1}\left( \gamma \right) N_{n}\left( 2,k\right)} \right).
$$

\noindent while $\sum_{n}^ {\ast \ast }\left( \infty \right)$ is the vector
$$
\left( {e_{1} \left( \gamma
\right) N_{n}\left( 0,\ell ,1\right) +e_{1}\left( \gamma \right) N_{n}\left(
2,\ell \right) ,-N_{n}\left( 0,\ell ,1\right) -e_{1}\left( \gamma \right)
N_{n}\left( 2,\ell \right),} \right.
$$

$$
\left. {-N_{n} \left( 3,\ell ,1\right)
-e_{3}\left( \gamma \right) N_{n}\left( 2,\ell \right) ,-N_{n}\left( 2,\ell\right)} \right.
$$

\noindent with $\gamma =+\infty$; $\sum_{n}^{\ast }\left( \gamma >2\right)
=\sum_{n}^{\ast }\left( \infty \right) $ with $2<\gamma <+\infty $ and $%
\sum_{n}^{\ast \ast }\left( \gamma >2\right) $ is obtained from $%
\sum_{n}^{\ast \ast }\left( \infty \right) $ by replacing the last line by $%
N_{n}\left( 2,\ell \right) /\gamma$.\\

\noindent  $\sum_{n}^{\ast }\left( \gamma <2\right) $ is obtained from $\sum_{n}^{\ast
}\left( \infty \right) $ by adding $-\left( \gamma +1\right) N_{n}\left(
0,k,\ell \right) -N_{n}\left( 2,k\right) $ as a fourth line.\\

\noindent Finally, one forms $\sum_{n}^{\ast }\left( \gamma <2\right) $ by dropping the first line
of $\sum_{n}^{\ast \ast }\left( \gamma \geq 2\right) $.\\

\noindent Now simple computations show that if $L_{n}\left( 1\right) $ (resp. $L_{n}\left(
2\right) $) is any coordinate of $\sum_{n}^{\ast \ast }\left( \infty \right) 
$ (resp. $\sum_{n}^{\ast \ast }\left( \infty \right) $), $\sum_{n}^{\ast
}\left( \gamma >2\right) $ (resp. $\sum_{n}^{\ast \ast }\left( \gamma
>2\right) $ or $\sum^{\ast }\left( \gamma <2\right) $ (resp. $\sum_{n}^{\ast
\ast }\left( \gamma <2\right) )$, one has 

\begin{equation*}
\mathbb{E}L_{n}\left( 1\right) L_{n}\left( 2\right) \sim \rho _{n}\left(
1,1/2\right) \text{ or }\left( 2\kappa \left( \gamma \right) \right)
^{-1}\rho _{n}\left( 1,1/2\right) ^{2}\text{ or }\left( \ell /k\right)
^{1/2}.
\end{equation*}

\noindent (see computations that led to (\ref{6.10})). Hence, by (\ref{4.20}),

\begin{equation}
\mathbb{E}L_{n}\left( 1\right) L_{n}\left( 2\right) \rightarrow 0. \label{7.2}
\end{equation}

\bigskip

\noindent This together with (\ref{3.34}), (\ref{3.43}), (\ref{4.22}), (\ref{4.27}), (%
\ref{6.10}), Lemmas \ref{l4.1} and \ref{l4.3} yield the covariance matrices by routine
computations.\\

\noindent For $\gamma=2$, $T_{n}^{\ast }\left( \gamma =2\right) $, several possibilities can
happen depending on how $b\left( t\right) $ converges to zero as $t$ tends
to zero. Anyway Formulas (\ref{6.17}) and (\ref{6.18}) include all the
possibilities of limiting laws.
\end{proof}

\end{document}